\def\adh#1{\overline{#1}}
\newtheorem {pro}{Proposition}[section]
\newtheorem {thm}[pro]{Theorem}
\newtheorem {cor}[pro]{Corollary}
\newtheorem{lem}[pro]{Lemma}
\theoremstyle{definition}
 \newtheorem {rem}[pro]{Remark}
\newtheorem {dfn}[pro]{Definition}
\newtheorem {exa}[pro]{Example}
\newtheorem {step}{Step}
\newcommand{\omt}{ \tilde{\Omega}}
\newcommand{\jac}{\mbox{jac}\,}
\newcommand{\tra}{\mathbf{tr}}
\newcommand{\R}{\mathbb{R}}
\newcommand{\N}{\mathbb{N}}
\newcommand{\cc}{\mathscr{C}}
\newcommand{\st}{\mathscr{S}}
\newcommand{\pms} {\mathbf{p_{_M}}}\newcommand{\pom} {\mathbf{p_{_\omd}}}
\newcommand{\et}{\quad \mbox{and} \quad }
\newcommand{\hn}{\mathcal{H}}
\newcommand{\oca}{\mathcal{O}}
\newcommand{\mba}{ {\overline{M}}}\newcommand{\omd}{{\Omega}}
\newcommand{\mep}{ {M^\ep}}
\newcommand{\uxo}{U_\xo}
\newcommand{\nep}{ {N^\ep}}
\newcommand{\alphat}{{\tilde{\alpha}}}
\newcommand{\mut}{\tilde{\mu}}
\newcommand{\pet}{\tilde{p}}
\newcommand{\tir}{\tilde{r}}
\newcommand{\F}{\mathcal{F}}
\newcommand{\ep}{\varepsilon}\newcommand{\epd}{\frac{\varepsilon}{2}}\newcommand{\epq}{\frac{\varepsilon}{4}}
\newcommand{\pa}{\partial}
\newcommand{\hh}{\mathcal{V}}
\newcommand{\bou}{\mathbf{B}}
\newcommand{\sph}{\mathbf{S}}
\newcommand{\orn}{{0_{\R^n}}}
\newcommand{\Bb}{\overline{ \mathbf{B}}}
\newcommand{\pau}{\pa}
\newcommand{\pad}{\pa}
\newcommand{\xo}{{x_0}}
\newcommand{\mc}{{\check{M}}}
\newcommand{\supp}{\mbox{\rm supp}}
\title[]{On Sobolev spaces of bounded subanalytic manifolds}
 \thanks{Research partially supported
by the NCN grant  2021/43/B/ST1/02359.}
\author[ G. Valette]{ Guillaume Valette}
\address[G. Valette]{Instytut Matematyki Uniwersytetu
Jagiello\'nskiego, ul. S. \L ojasiewicza 6, Krak\'ow, Poland}\email{guillaume.valette@im.uj.edu.pl}
\keywords{Trace operator,  density of smooth functions, Sobolev spaces, Sobolev embeddings, Gagliardo-Nirenberg inequality, singular domains, subanalytic sets.}
\subjclass[2020]{46E35, 32B20, 14P10}
\begin{document}

\begin{abstract} We focus on the Sobolev spaces of bounded subanalytic submanifolds of $\R^n$. 
 We prove that if $M$ is such a manifold then the space $\cc_0^\infty(M)$ is dense in $W^{1,p}(M,\pa M)$ (the kernel of the trace operator) for all $p\le\pms$, where $\pms$ is the codimension in $M$ of the singular locus of $\mba\setminus M$ (which is always at least $2$). In the case where $M$ is normal, i.e. 
when  $\bou(x_0,\ep)\cap M$ is connected for every $x_0\in\mba$ and $\ep>0$ small, we show that $\cc^\infty(\mba)$
 is dense in $W^{1,p}(M)$ for all such $p$. This yields some duality results between $W^{1,p}(\omd,\pa \omd)$ and $W^{-1,p'}(\omd)$ in the case where $1< p\le \pom$ and $\omd$ is a bounded subanalytic open subset of $\R^n$.   As a byproduct, we deduce uniqueness of the (weak) solution of the Dirichlet problem associated with the Laplace equation. We then prove a version of Sobolev's Embedding Theorem for subanalytic bounded manifolds, show Gagliardo-Nirenberg's inequality (for all $p\in [1,\infty)$), and derive some versions of Poincar\'e-Friedrichs' inequality. We finish with a generalization of Morrey's Embedding Theorem.

\end{abstract}
\maketitle
\begin{section}{Introduction}
This article develops the theory of Sobolev spaces on subanalytic bounded submanifolds of $\R^n$, not necessarily compact. It was shown in \cite{poincwirt} that Poincar\'e-Wirtinger inequality holds on subanalytic domains. In \cite{trace}, we focused on the Sobolev space $W^{1,p}(M)$, $M$ bounded subanalytic manifold, in the case where $p$ is large, on which we  defined a trace operator and established that it is continuous and that the smooth compactly supported functions are dense in its kernel. The aim is to develop the basic material to study partial differential equations on bounded subanalytic open subsets of $\R^n$ and more generally on subanalytic manifolds.

 In the present article, we prove a version of Sobolev's Embedding Theorem for all $p\in [1,\infty)$ (Theorem \ref{thm_embedding}) for bounded subanalytic manifolds, asserting that there is a positive real number $\mu$ such that for all $p$,  the space $W^{1,p}(M)$ can be naturally compactly embedded in $L^{p+\mu}$, where $M$ is as above. We also generalize the famous closely related Gagliardo-Nirenberg inequality and provide some  Poincar\'e-Friedrichs type inequalities.  We also prove, in the case where $p$ is large,  that the elements of $W^{1,p}(M)$ are H\"older continuous with respect to the inner metric (Morrey's embedding, Corollary \ref{cor_morrey}).    The real number $\mu$ provided by our Sobolev Embedding Theorem is not only related to the dimension, as it is the case on Lipschitz domains, and actually heavily depends on the Lipschitz geometry of the singularities of the frontier of $M$. That it does not depend on $p$ is however valuable for applications and suffices to show for instance that any measurable function on such a manifold $M$ which is $L^1$ and such that all its subsequent partial derivatives are also $L^1$, must coincide almost everywhere with a function which is Lipschitz with respect to the inner metric (Corollary \ref{cor_infty}).  Such a result could be generalized to manifolds that are definable in a polynomially bounded o-minimal structure \cite{vdd, costeomin} but clearly fails as soon as the structure is not polynomially bounded (see Example \ref{exa_cor_infty}).

 Although subanalytic domains do not possess the cone property, it is well known since the original work of S. \L ojasiewicz \cite{lojdiv} that they are locally homeomorphic to cones, and it was established that this homeomorphism can be chosen subanalytic \cite{costeomin}. 
 We rely on the more recent results of \cite{gvpoincare,  livre}, where  a conic structure theorem for the Lipschitz geometry at a singular point of a subanalytic set (see Theorem \ref{thm_local_conic_structure}) was achieved. This theorem makes it possible to obtain
  some local estimates of the $L^p$ norm of functions that have an $L^p$ derivative (see section \ref{sect_estimates_loc}) which, together with some arguments of stratification theory, enables us to show that the smooth compactly supported functions are dense in the kernel of the trace operator in the case where $p$ is close to $1$ (Theorem \ref{thm_dense_lprime} and Corollary \ref{cor_densite_A_vide}). The provided real number underneath which this density result holds, denoted $\pms$, is the codimension in $M$ of the singularities of the frontier (which is always at least $2$). In particular, our density result holds in $W^{1,2}(M)$, which is the most important Sobolev  space.    Moreover, it entails some duality results between $W^{1,p}(\omd, \pa \omd)$ and $W^{-1,p'}(\omd)$ for $p\le \pom$, where $p'$ stands for the H\"older conjugate of $p$ and $\omd$ for a bounded open subanalytic subset of $\R^n$ (Proposition \ref{pro_dual}).

In \cite{bosmil}, L. P.  Bos and P. Milman prove some interesting Sobolev-Gagliardo-Nirenberg type inequalities for the functions of the space $\cc^\infty(\mba)$. Our purpose is however less to provide new inequalities in this framework than to illustrate that the recent developments of singularity theory make it possible to carry out a fully satisfying theory of the much wider class constituted by the Sobolev spaces of these manifolds, sufficiently rich to solve PDE on subanalytic domains. It was already observed \cite{adams} that Sobolev's and Morrey's embeddings theorems can be extended to the case where $M$ is an open subset of $\R^n$ that admits within its closure only isolated singularities that are metrically homogeneous (called domains with cusps in \cite{adams}). The  strength of our approach is however that we do not need any ad hoc assumption on the Lipschitz geometry of the considered bounded subanalytic manifold.

  As an application of our density theorem (Theorem \ref{thm_dense_lprime}), we also establish (Theorem \ref{thm_dir}) existence and uniqueness of the solution of the Dirichlet problem associated with the Laplace equation in $W^{1,2}(\omd)$, if $\Omega$ is an open bounded subanalytic set of any  dimension.  This result is closely related to the question  of the continuity of generalized Green functions to which T. Kaiser answered positively on subanalytic domains of $\R^2$ and $\R^3$ \cite{k1} (see also \cite{k2}).  The theory of Sobolev spaces that we develop in the present article provides a versatile framework to investigate this kind of problem.  It is indeed well-known that the generalized Dirichlet solution studied in \cite{k1} coincides with the classical one, when the latter solution exists. Noteworthy is the fact that our result brings new information on the class of integrability of the solution (and many improvements of this seem to be allowed by the theory) and makes it possible to compute effective approximations of solutions.
  
 It is worth stressing the fact that this theory actually goes over to the more general framework of polynomially bounded o-minimal structures (expanding $\mathbb{R}$) \cite{costeomin, vdd}.  Indeed, the arguments being essentially local, we only need the underlying manifold $M$ to be such that every $x\in \mba$ has a neighborhood $U$ in $\mba$ such that $U\cap M$ is bi-Lipschitz homeomorphic to a set definable in some polynomially bounded o-minimal structure (that may depend on $x$), and more generally we could adapt the statements to abstract varieties, which is useful for the complex case. Moreover, since we could use ``cut-off'' functions at infinity, the density results actually do not require the underlying manifold to be bounded.

\begin{subsection}{Some notations}Throughout this article,  $n$, $j$, and $k$ stand for  integers, the letter 
$M$  stands for a  bounded subanalytic $\cc^\infty$ submanifold of $\R^n$ and $m$ for its dimension, while the letter  $\Omega$ stands for a bounded definable open  subset of $\R^n$.

The origin of $\R^n$ will be denoted $0_{\R^n}$, although  the subscript $\R^n$ will be omitted when the ambient space is obvious from the context. We write $x\cdot y$ for the euclidean inner product of $x$ and $y$,  and $|.|$  for the euclidean norm.  Given $x\in  \R^n$ and $\ep>0$, we respectively denote by $\sph(x,\ep)$ and  $\bou(x,\ep)$ the sphere and the open ball of radius $\ep$ that are centered  at $x$ (for the euclidean norm), while $\adh{\bou}(x,\ep)$ will stand for the corresponding closed ball.

A mapping $h:A\to B$, $A\subset \R^n, B\subset \R^k$, is {\bf Lipschitz} if there is a constant $C$ such that $|h(x)-h(x')|\le C|x-x'|$ for all $x$ and $x'$. It is {\bf bi-Lipschitz} if it is a homeomorphism and if in addition $h$ and $h^{-1}$ are both Lipschitz.  Given two nonnegative functions $\xi$ and $\zeta$ on a set $E$ as well as a subset $Z$ of $E$, we write ``$\xi\lesssim \zeta$ on $Z$'' or  ``$\xi(x)\lesssim \zeta(x)$ for $x\in Z$'' when there is a constant $C$ such that $\xi(x) \le C\zeta(x)$ for all $x\in Z$.

 If $E\subset \R^n$ and $i\in \N\cup \{\infty\}$, we will write $\cc^i(E)$ for the space of those functions on $E$ that extend to a $\cc^i$ function on an open neighborhood of $E$ in $\R^n$.  We denote the closure of $E$ by $\adh{E}$ and set $\delta E:=\adh{E}\setminus E$.   We denote by $\hn^k$ the $k$-dimensional Hausdorff measure and by $L^p(E,\hn^k)$ the set of $L^p$ functions on $E$ with respect to this measure.

 By ``manifold'', we will always mean submanifold of $\R^n$, and a submanifold of $\R^n$ will always be endowed with its canonical measure, provided by volume forms.  As integrals will always be considered with respect to this measure, we will not indicate the measure when integrating on a submanifold of $\R^n$.

 Given a measurable mapping $v:M\to \R^k$ (with respect to the canonical measure of $M$) and an open subset $U$ of $M$, for each $p\in [1,\infty)$ we denote by $||v||_{L^p(U)}$ the (possibly infinite) $L^p$ norm  of the restriction of $v$ to $U$. As usual, we denote by $L^p(M)$ the set of measurable functions $u$ on $M$ for which  $||u||_{L^p(M)}$ is finite, and set $$W^{1,p}(M):= \{u\in L^p(M),\; |\partial u| \in L^p(M)\},$$ where $\pa u$ stands for the gradient of $u$ in the sense of distributions.
 As well-known,  this space, equipped with the norm
$$||u||_{W^{1,p}(M)}:=||u||_{L^p(M)}+| |\partial u||_{L^p(M)},$$ is a Banach space, in which
  $\cc^\infty(M)$ is dense  for all $p\in [1,\infty)$. 
  
  We denote by $\supp\, u$ the support of a distribution $u$ on $M$, and, given an open subset $ Z$  of $\mba$ containing $M$, we write $\supp_Z u$ for  the {\bf support of $u$ in $Z$}, defined as the closure in $Z$ of $\supp\, u$.

  We will write $\cc_0^\infty(M)$ for the space of elements of $\cc^\infty(M)$ that are compactly supported, while $W^{1,p}_0(M)$ will stand for the closure of this space in $W^{1,p}(M)$. We will regard $\cc^\infty(\mba)$ as a subset of $W^{1,p}(M)$.  In particular, for $u\in \cc^\infty(\mba)$,  $\supp\, u$ will be a subset of $M$ and
   we set for $Z$ open in $\mba$ $$\cc^\infty_{Z}(\mba):=\{u\in \cc^\infty(\overline{M}):\overline{ \supp\, u} \subset Z  \}.$$
Like  $\cc^\infty(\mba)$, the space $\cc^\infty_{Z}(\mba)$ will be regarded as a subspace of $\cc^\infty(M)$.
Note that when $Z\supset M$, the space $\cc^\infty_{Z}(\mba)$ is nothing but the set constituted by the functions $u\in \cc^\infty(\mba)$ that are {\bf compactly supported in $Z$}, i.e., for which $\supp_Z u$ is compact. 

Given two functions $u$ and $v$ on  $\omd$ such that $uv$ is $L^1$, we set $<u,v>:=\int_\omd uv.$  
  We also denote by $\pa_i u$ the $i^{th}$ partial derivative in the sense of distributions and,  for simplicity of statements, let $\pa_0 u:=u$.
 
 Given a mapping $h:M\to M$, we write $\jac\, h$  for the absolute value of the determinant of $Dh$, the derivative of $h$ (defined at the points where $h$ is differentiable). 
 We write $p'$ for the H\"older conjugate of $p$, i.e., $p'=\frac{p}{p-1}$, and, given a topological vector space $V$, we denote by $V'$ its topological dual.

  We shall make use of the following form of H\"older's inequality. Let $p_1,\cdots,p_\kappa$ and $\theta_1,\dots, \theta_\kappa$ be positive real numbers and let $q\in [1,\infty)$ be such that $\frac{1}{q}=\sum_{i=1} ^\kappa \frac{\theta_i}{p_i}$.  If  $g_1,\dots,g_\kappa$ are measurable functions such that $g_i\in L^{p_i}(M)$, for all $i$, then we have 
 \begin{equation}\label{eq_holder}
  ||g_1^{\theta_1}\cdots g_\kappa ^{\theta_\kappa}||_{L^q(M)}\le ||g_1 ||^{\theta_1} _{L^{p_1}(M)}\cdots ||g_\kappa|| _{L^{p_\kappa}(M)}^{\theta_\kappa}.
 \end{equation}
 \end{subsection}

\end{section}


\begin{section}{Subanalytic sets}
 We refer the reader to  \cite{bm, ds, l, livre} for all the basic facts about subanalytic geometry.  Actually, \cite{livre} also gives a detailed presentation of the Lipschitz properties of these sets, so that the reader can find there all the needed facts to understand the results achieved in the present article.

\begin{dfn}\label{dfn_semianalytic}
A subset $E\subset \R^n$ is called {\bf semi-analytic} if it is {\it locally}
defined by finitely many real analytic equalities and inequalities. Namely, for each $a \in   \R^n$, there is
a neighborhood $U$ of $a$ in $\R^n$, and real analytic  functions $f_{ij}, g_{ij}$ on $U$, where $i = 1, \dots, r, j = 1, \dots , s_i$, such that
\begin{equation}\label{eq_definition_semi}
E \cap   U = \bigcup _{i=1}^r\bigcap _{j=1} ^{s_i} \{x \in U : g_{ij}(x) > 0 \mbox{ and } f_{ij}(x) = 0\}.
\end{equation}

The flaw of the  semi-analytic category is that  it is not preserved by analytic morphisms, even when they are proper. To overcome this problem, we prefer working with the  subanalytic sets, which are defined as the projections of the semi-analytic sets.

A subset $E\subset \R^n$  is  {\bf  subanalytic} if 
 each point $x\in\R^n$ has a neighborhood $U$ such that $U\cap E$ is the image under the canonical projection $\pi:\R^n\times\R^k\to\R^n$ of some relatively compact semi-analytic subset of $\R^n\times\R^k$ (where $k$ depends on $x$).
   
   A subset $Z$ of $\R^n$ is  {\bf globally subanalytic} if $\hh_n(Z)$ is a subanalytic subset of $\R^n$, where $\hh_n : \R^n  \to (-1,1) ^n$ is the homeomorphism defined by $$\hh_n(x_1, \dots, x_n) :=  (\frac{x_1}{\sqrt{1+|x|^2}},\dots, \frac{x_n}{\sqrt{1+|x|^2}} ).$$

   We say that {\bf a mapping $f:A \to B$ is  subanalytic} (resp. globally subanalytic), $A \subset \R^n$, $B\subset \R^m$ subanalytic (resp. globally subanalytic), if its graph is a  subanalytic  (resp. globally subanalytic) subset of $\R^{n+m}$. In the case $B=\R$, we say that  $f$ is a (resp. globally) {\bf  subanalytic function}. For simplicity globally subanalytic sets and mappings will be referred as {\bf definable} sets and mappings (this terminology is often used by o-minimal geometers \cite{vdd,costeomin}).

   \end{dfn}

The globally subanalytic category is very well adapted to our purpose.  It is stable under intersection, union, complement, and projection. It thus constitutes an o-minimal structure \cite{vdd, costeomin}, and consequently admits cell decompositions and stratifications, from which it comes down that definable sets enjoy a large number of finiteness properties (see \cite{costeomin,livre} for more).

\subsection*{\L ojasiewicz's inequality.} Some results about $L^p$ functions will be valid {\it for $p$ sufficiently large}, which means that there will be $p_0\in \R$ such that the claimed fact will be true for all $p\in (p_0,\infty)$ (i.e. when we do not specify that $p$ may be infinite, it should be understood that it is finite). 
This number $p_0$ will be provided by \L ojasiewicz's inequality, which is one of the main tools of subanalytic geometry. It originates in the work of S. \L ojasiewicz \cite{lojdiv}, who established this inequality in order to answer a problem  about distribution theory.  We shall make use of the following version (see the proof of (\ref{ln})):


 \begin{pro}\label{pro_lojasiewicz_inequality}(\L ojasiewicz's inequality)
Let $f$ and $g$ be two globally subanalytic functions on a  globally subanalytic set $A$ with $\sup\limits_{x\in A} |f(x)|<\infty$. Assume that
 $\lim\limits_{t \to 0} f(\gamma(t))=0$ for every
globally subanalytic arc $\gamma:(0,\ep) \to A$ satisfying $\lim\limits_{t \to 0} g(\gamma(t))=0$.

Then there exist $\nu \in \N$ and $C \in \R$ such that for any $x \in A$:
$$|f(x)|^\nu \leq C|g(x)|.$$
\end{pro}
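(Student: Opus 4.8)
The plan is to replace $f$ and $g$ by a single monotone one-variable globally subanalytic function and then to invoke the polynomial boundedness of the globally subanalytic structure. First I would dispose of the zero locus of $g$: if $x_{0}\in A$ satisfies $g(x_{0})=0$, then the constant map $\gamma\equiv x_{0}$ is a globally subanalytic arc in $A$ with $\lim_{t\to 0}g(\gamma(t))=0$, so the hypothesis forces $f(x_{0})=\lim_{t\to 0}f(\gamma(t))=0$. Hence the asserted inequality holds trivially on $\{g=0\}$, and it is enough to establish it on the globally subanalytic set $A':=\{x\in A: g(x)\ne 0\}$.

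On $A'$ I would introduce the ``majorant'' $\phi\colon (0,\infty)\to[0,B]$, where $B:=\sup_{A}|f|<\infty$, by setting
\[
\phi(s):=\sup\{\,|f(x)| : x\in A',\ |g(x)|\le s\,\}\qquad(\sup\emptyset:=0).
\]
Being the supremum of a globally subanalytic family of bounded subsets of $\R$, the function $\phi$ is globally subanalytic; it is plainly non-decreasing and bounded by $B$, and, taking $x$ itself as a competitor, $|f(x)|\le \phi(|g(x)|)$ for every $x\in A'$. Thus everything reduces to bounding $\phi(s)$ by a power of $s$ as $s\to 0^{+}$.

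The crux is the claim $\lim_{s\to 0^{+}}\phi(s)=0$. Suppose not; since $\phi$ is non-decreasing this means $\ell:=\inf_{s>0}\phi(s)>0$, so for each $k$ there is $x_{k}\in A'$ with $|g(x_{k})|\le 1/k$ and $|f(x_{k})|>\ell/2$. Set $X:=\{x\in A': |f(x)|\ge \ell/2\}$, a globally subanalytic set on which $|g|$ is positive but has infimum $0$. The globally subanalytic set $\{(s,x)\in(0,1)\times X: |g(x)|=s\}$ has a projection to the first coordinate that is a globally subanalytic subset of $(0,1)$ with $0$ in its closure, hence contains some interval $(0,\eta)$; by definable choice there is a globally subanalytic section $s\mapsto \gamma(s)\in X$ with $|g(\gamma(s))|=s$. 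Then $\gamma\colon(0,\eta)\to X\subset A$ is a globally subanalytic arc with $g(\gamma(s))\to 0$, so the hypothesis yields $f(\gamma(s))\to 0$, contradicting $|f(\gamma(s))|\ge \ell/2$.

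Finally, $\phi$ is globally subanalytic, non-decreasing, and $\phi(0^{+})=0$; since the globally subanalytic structure is polynomially bounded (equivalently, by the Puiseux expansion of one-variable globally subanalytic germs at $0$), there are $\alpha>0$, $C_{1}>0$ and $\delta\in(0,1]$ with $\phi(s)\le C_{1}s^{\alpha}$ for $s\in(0,\delta]$. Picking an integer $\nu$ with $\nu\alpha\ge 1$, for $x\in A'$ with $|g(x)|\le\delta$ we get $|f(x)|^{\nu}\le \phi(|g(x)|)^{\nu}\le C_{1}^{\nu}|g(x)|^{\nu\alpha}\le C_{1}^{\nu}|g(x)|$, while for $|g(x)|>\delta$ we have $|f(x)|^{\nu}\le B^{\nu}\le (B^{\nu}/\delta)\,|g(x)|$, and for $g(x)=0$ both sides vanish; so $C:=\max\{C_{1}^{\nu},\,B^{\nu}/\delta\}$ works. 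The main obstacle — and the only place where ``subanalytic'' is genuinely needed rather than just ``o-minimal'' — is this last step: without polynomial boundedness a definable function with $\phi(0^{+})=0$ need not be dominated by any power $s^{\alpha}$ (e.g.\ $s\mapsto 1/\log(1/s)$, definable in $\R_{\exp}$), so no integer $\nu$ could work; the proof of $\phi(0^{+})=0$ itself is routine curve/arc selection.
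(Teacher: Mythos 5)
The paper itself gives no proof of this proposition; it simply refers the reader to the external reference [li] (A.~Valette, ``\L{}ojasiewicz inequality at singular points''), so there is no internal argument to compare against. Your proof is correct and follows the natural and, as far as I can tell, standard route for this kind of statement: reduce to the one-variable majorant $\phi(s):=\sup\{|f(x)|:x\in A',\ |g(x)|\le s\}$, which is definable because the globally subanalytic structure is o-minimal; prove $\phi(0^{+})=0$ by definable arc selection, which is the only place the hypothesis on arcs is used; and conclude from polynomial boundedness (Puiseux expansion of $\phi$ at $0^{+}$) that $\phi(s)\le C_1 s^{\alpha}$ near $0$, whence any integer $\nu$ with $\nu\alpha\ge 1$ works after a trivial adjustment on $\{|g|>\delta\}$.

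Two small points are worth spelling out rather than leaving implicit. First, the section $s\mapsto\gamma(s)$ produced by definable choice is a priori only definable; to make it an \emph{arc}, i.e.\ a continuous map eligible for the hypothesis, you should invoke the monotonicity theorem to shrink $(0,\eta)$ to a subinterval on which $\gamma$ is continuous (indeed $\cc^1$). You flag this as routine, and it is, but it deserves a sentence since the hypothesis is phrased in terms of arcs. Second, when disposing of $\{g=0\}$ via the constant arc $\gamma\equiv x_0$, note that a constant map does qualify as a globally subanalytic arc. Your closing observation that polynomial boundedness is precisely what makes the final step work --- a non-decreasing definable function with $\phi(0^{+})=0$ in a structure like $\R_{\exp}$ need not be dominated by any power of $s$ --- is exactly right, and is consistent with the emphasis the paper places on polynomial boundedness elsewhere.
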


See  for instance \cite{li} for a proof. 

\subsection*{The set $\pa M$.} 
We will say that $M$ is {\bf Lipschitz regular} at $x\in \delta M$ if this point has a neighborhood $U$ in $\R^n$ such that each connected component of $U\cap M$ is the interior of a Lipschitz manifold with boundary $U\cap \delta M$. 
 We then let:
$$\pau M:=\{x\in \delta M: M\mbox{ is Lipschitz regular at $x$}\}.$$
This set is definable and we have
\begin{equation}\label{eq_pa1}\dim \left(\delta M \setminus  \pau M\right)\le m-2.\end{equation}
 This fact follows from a famous result  which is sometimes referred as {\it Wing Lemma} by geometers (see \cite[Proposition 1, section 19]{l}, \cite[Proposition 9.6.13]{bcr}, or \cite[Lemma 5.6.7]{livre}).   The closure of the set $\pa M$ in $\R^n$ is called the {\bf fence} of $M$.

 \begin{exa} If $\omd$ is a definable open subset of $\R^n$  then $\adh{\omd}$ is the union of (one or two)  $\cc^\infty$ submanifolds   with boundary of $\R^n$ at $\hn^{n-1}$-almost every point of $\delta \omd$.
It is worth here underlining that, when $M$ has positive codimension in $\R^n$, it is not true that $\mba$ is a union of $\cc^\infty$  manifolds with boundary at $\hn^{m-1}$-almost every point of $\delta M$. A counterexample is given by the graph of the function $f(x)=x^{3/2}$, $x\in [0,1]$,  which is not a $\cc^2$ manifold with boundary at the origin. It is however true that $\mba$ is a finite union of $\cc^1$ manifolds with boundary at $\hn^{m-1}$-almost every point of $\delta M$.\end{exa}

\begin{subsection}{Stratifications.}  It is a very natural idea to divide a set which is singular into $\cc^\infty$ manifolds. This is the purpose of the definition below which, together with Theorem \ref{pro_existence_stratifications},  will be useful to work locally at a point of the frontier of the underlying manifold. 

\begin{dfn}\label{dfn_stratifications}
 A {\bf
stratification} of a subset of $ \R^n$ is a finite partition of it into
definable $\cc^\infty$ submanifolds of $\R^n$, called {\bf strata}. A stratification is {\bf compatible} with a set if this set is the union of some strata. A {\bf refinement} of a stratification $\Sigma$ is a stratification $\Sigma'$ compatible with the strata of $\Sigma$.


 A  stratification $\Sigma$ of a set $X$ is {\bf locally bi-Lipschitz trivial} if for every $S\in \Sigma$, there are an open neighborhood $V_S$ of $S$ in $X$ and a smooth  retraction $\pi_S:V_S\to S$  such that every $x_0\in S$ has an open neighborhood $W$ in $S$ for which there is a  bi-Lipschitz homeomorphism  $$\Lambda:\pi_S^{-1}(W)\to \pi_S^{-1}(x_0) \times W, $$ satisfying:
  \begin{enumerate}[(i)]
   \item  $\pi_S(\Lambda^{-1}(x,y))= y$, for all $(x,y)\in \pi_S^{-1}(x_0)\times  W$.
   \item   $\Sigma_{x_0}:=\{  \pi_S^{-1}(x_0)\cap Y:Y\in \Sigma\} $ is a stratification of $ \pi_S^{-1}(x_0)$,  and $\Lambda(\pi_S^{-1}(W)\cap Y)=(\pi_S^{-1}(x_0)\cap Y)\times W$, for all $Y\in \Sigma$.
  \end{enumerate}
\end{dfn}

T. Mostowski \cite[Proposition $1.2$]{m} proved that every complex analytic set admits a locally bi-Lipschitz trivial stratification. This result was extended to the subanalytic category by A. Parusi\'nski  \cite{stratlip}, to  polynomially bounded o-minimal structures expanding $\R$ in \cite{lipsomin}, and to polynomially bounded o-minimal structures expanding an arbitrary real closed field in \cite{halyin}.  The author gives in  \cite{gvhandbook} a construction of  locally  bi-Lipschitz trivial stratifications for definable sets in polynomially bounded o-minimal structures (expanding $\R$) for which the local trivializations are in addition definable. The stratifications constructed in \cite{stratlip, lipsomin, gvhandbook} can be required to be compatible with finitely many definable sets, and therefore, to refine a given stratification.  Hence, the theorem below follows from  \cite[Theorem 1.4]{stratlip}, \cite[Theorem $2.6$]{lipsomin}, or \cite[Corollary $1.6.8$]{gvhandbook}.
\begin{thm}\label{pro_existence_stratifications}
  Every stratification $\Sigma$ can be refined into a stratification which is locally bi-Lipschitz trivial.
 \end{thm}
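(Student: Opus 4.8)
The plan is to produce a single refinement $\Sigma'$ of $\Sigma$ which is a \emph{Lipschitz stratification}, i.e. satisfies Mostowski's condition $(L)$, and then to check that every Lipschitz stratification is already locally bi-Lipschitz trivial along each of its strata; since the property required in Definition \ref{dfn_stratifications} is quantified over all strata of the stratification, this is enough, and there is no need to worry about re-refining. The existence of a subanalytic (or, in the polynomially bounded o-minimal setting, definable) refinement satisfying $(L)$ is one of the main structural facts about the Lipschitz geometry of these sets recalled in \cite{livre}; it strengthens Whitney's conditions, so we may and do assume in addition that $\Sigma'$ is a Whitney stratification.

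To see that a Lipschitz stratification is locally bi-Lipschitz trivial, fix $S\in\Sigma'$ with $\dim S=d$ and a point $x_0\in S$. After rotating $\R^n$ we may assume $T_{x_0}S=\R^d\times\{0\}$, so that, shrinking around $x_0$, the orthogonal projection $p:\R^n\to\R^d$ restricts to a diffeomorphism of a neighborhood $W$ of $x_0$ in $S$ onto an open set $W'\subset\R^d$; put $\pi:=(p|_W)^{-1}\circ p$ on a small definable neighborhood $U$ of $x_0$ in $X$. Then $\pi$ is a $\cc^\infty$ definable retraction of $U$ onto $W$, and by Whitney's condition $(a)$ the tangent spaces to the strata near $x_0$ are close in direction to $\R^d\times\{0\}$, so after further shrinking $\pi$ restricts to a submersion on every stratum meeting $U$; in particular $\Sigma'_{x_0}$ is a stratification of $\pi^{-1}(x_0)\cap X$. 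It remains to build a definable bi-Lipschitz $\Lambda:U\to(\pi^{-1}(x_0)\cap X)\times W$ over $\pi$ carrying each $Y\cap U$ onto $(\pi^{-1}(x_0)\cap Y)\times W$. This is where condition $(L)$ is used: following Mostowski, it allows one to lift the coordinate vector fields of $W\cong W'$ to definable vector fields $v_1,\dots,v_d$ on $U$ which are \emph{Lipschitz}, tangent to every stratum, and projected by $d\pi$ onto those coordinate fields; the flow of these fields provides $\Lambda^{-1}$, tangency to the strata yields property (ii) and compatibility with $d\pi$ yields property (i), while Gronwall's inequality applied to the Lipschitz bounds of the $v_i$ (and of the $v_i$ run backwards) shows that both $\Lambda$ and $\Lambda^{-1}$ are Lipschitz.

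The main obstacle lies entirely in the two imported ingredients — the existence of an $(L)$-refinement and Mostowski's lift of tangent vectors along $S$ to Lipschitz stratified vector fields — which together form the technical core of Lipschitz equisingularity. The reason they are hard is that the euclidean distance inside a fibre $\pi^{-1}(y)\cap X$ degenerates as $y$ approaches the lower-dimensional strata contained in $\mba\setminus M$, so that the topological triviality furnished by Hardt's theorem gives no control on bi-Lipschitz constants; condition $(L)$ is precisely the quantitative statement taming this degeneration, and Mostowski's procedure (correcting the naive lift stratum by stratum) is what turns it into honest Lipschitz vector fields. An alternative route, closer in spirit to the conic structure Theorem \ref{thm_local_conic_structure} on which the rest of the paper relies, would be to refine $\Sigma$ into a decomposition by L-regular cells compatible with $\pi$ and to glue the obvious fibrewise bi-Lipschitz trivializations of the cells; the obstacle then becomes making these choices coherent along the shared faces of the cells, which again requires uniform bounds on the cell constants as one tends to $x_0$. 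Both routes are carried out in \cite{livre}.
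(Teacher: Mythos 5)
Your proposal is correct and follows essentially the same route as the paper, which simply cites Parusi\'nski \cite{stratlip} for the existence of (subanalytic) Lipschitz stratifications in Mostowski's sense refining $\Sigma$, and Mostowski \cite{m} for the fact that such stratifications are locally bi-Lipschitz trivial. The one inaccuracy in your write-up is the claim that the lifted stratified vector fields $v_1,\dots,v_d$ (and hence $\Lambda$) can be taken \emph{definable}: Mostowski's recursion yields Lipschitz, but not in general definable, lifts; this is harmless here since Definition~\ref{dfn_stratifications} only requires the retraction $\pi$, not $\Lambda$, to be definable, and definable trivializations are a separate strengthening obtained in \cite{projreg}, which the paper also mentions.
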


 \begin{rem}\label{rem_pi_S} \begin{enumerate}
    \item\label{item_whitney}    Let  $\Sigma$ be a stratification of $\mba$. We can require the   refinement of $\Sigma$  provided by the above theorem, that we will denote by $\Sigma'$, to be a refinement of a Whitney $(a)$ stratification of $\mba$ compatible with $M$ (see \cite[Ch. $2$, section $2.6$]{livre} for the definition and construction of Whitney $(a)$ regular refinements).  This ensures that for every $ S\in \Sigma'$ and $\pi:U\to S$  smooth retraction,
  $\pi^{-1}(x)\cap M$ is a smooth manifold for all $x\in S$, after shrinking $U$ to a smaller neighborhood of $S$ if necessary.    It is also worthy of notice that we can require that the only stratum of maximal dimension is $M$, taking together all the strata included in $M$ (as the local trivializations preserve these strata, they preserve $M$).

\item\label{item_retractions} Mostowski's Lipschitz stratifications admit a trivialization of any given smooth retraction $\pi_S:V_S\to S$, if $S$ is a stratum. Corollary $1.6.8$ of \cite{gvhandbook} provides a stratification such that a given smooth definable retraction, that can be for instance the closest point retraction (which is definable \cite[Proposition $2.4.1$]{livre}), can be trivialized. Hence, the stratification as provided by the above theorem admits  {\it definable} smooth retractions on strata that are bi-Lipschitz trivial.
                        \end{enumerate}
 \end{rem}

\end{subsection}
\begin{subsection}{Lipschitz conic structure}\label{sect_lcs}
 The main ingredient of our approach is the following result achieved  in \cite[Theorem $3.1$]{gvpoincare} (see also Theorem $3.4.1$ of the survey \cite{livre} which gives in addition a complete expository of the theory). In this theorem, $x_0* (\sph(x_0,\ep)\cap X)$ stands for the cone over $\sph(x_0,\ep)\cap X$ with vertex at $x_0$.

\begin{thm}[Lipschitz Conic Structure]\label{thm_local_conic_structure}
  Let  $X\subset \R^n$ be subanalytic and $x_0\in X $. 
For $\ep>0$ small enough, there exists a Lipschitz subanalytic homeomorphism
$$H: x_0* (\sph(x_0,\ep)\cap X)\to  \Bb(x_0,\ep) \cap X,$$  
  satisfying $H_{| \sph(x_0,\ep)\cap X}=Id$, preserving the distance to $x_0$, and having the following metric properties:
\begin{enumerate}[(i)] 
 \item\label{item_H_bi}     The natural retraction by deformation onto $x_0$ $$r:[0,1]\times  \Bb(x_0,\ep)\cap X \to \Bb(x_0,\ep)\cap X,$$ defined by $$r(s,x):=H(sH^{-1}(x)+(1-s)x_0),$$ is Lipschitz.   
 Indeed, there is a constant $C$ such that  for every fixed $s\in [0,1]$, the mapping $r_s$ defined by $x\mapsto r_s(x):=r(s,x)$, is $Cs$-Lipschitz.
 \item \label{item_r_bi}  For each $\delta>0$,
 the restriction of $H^{-1}$ to $\{x\in X:\delta \le |x-x_0|\le \ep\}$ is Lipschitz and, for each $s\in (0,1]$, the map  $r_s^{-1}:\Bb(x_0,s\ep) \cap X\to \Bb(x_0,\ep) \cap X$ is Lipschitz. 
\end{enumerate}
\end{thm}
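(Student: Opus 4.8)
\medskip

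\noindent\textit{Proposed approach.} The plan is to produce $H$ as, essentially, the inward radial flow of a carefully chosen stratified vector field on $X\cap(\Bb(x_0,\ep)\setminus\{x_0\})$, and then to extract all the metric statements from uniform pointwise control on that vector field. After a translation we may assume $x_0=0$; write $\rho(x):=|x|$. First I would fix a stratification $\Sigma$ of $\R^n$ compatible with $X$ and with $\{0\}$, refined so as to be locally bi-Lipschitz trivial (Theorem \ref{pro_existence_stratifications}); in fact I would take $\Sigma$ to be a Lipschitz stratification in the sense of Mostowski \cite{m}, which exists for subanalytic sets by Parusi\'nski's theorem \cite{stratlip}. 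Applying \L ojasiewicz's inequality (Proposition \ref{pro_lojasiewicz_inequality}) to $\rho$ restricted to the strata contained in $X\setminus\{0\}$, along arcs tending to $0$, shows that after shrinking $\ep$ the function $\rho$ has no critical point on any such stratum inside $\Bb(0,\ep)$; hence $\rho$ restricts to a proper stratified submersion $X\cap(\Bb(0,\ep)\setminus\{0\})\to(0,\ep]$, and by the isotopy lemma the stratified set $X\cap\sph(0,r)$ is, up to a stratified homeomorphism, independent of $r\in(0,\ep]$. This already yields the purely topological conic structure of \L ojasiewicz (\cite{lojdiv}, see also \cite{costeomin}); the work is to upgrade it to a Lipschitz statement with the asserted scaling.

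Next I would construct a vector field $v$ on $X\cap(\Bb(0,\ep)\setminus\{0\})$ that is subanalytic, tangent to every stratum, satisfies $d\rho(v)\equiv-1$ (so its flow moves points inward, $\rho$ decreasing at unit speed), and — the crucial point — is metrically controlled: $|v(x)|\le C$ (using \L ojasiewicz to bound $|\nabla(\rho|_S)|$ from below on the strata $S$), together with a ``radial Lipschitz'' bound asserting, roughly, that the \emph{direction} of $v$ rotates at Euclidean rate $\lesssim 1/\rho(x)$ as one moves inside a sphere $\sph(0,r)$ and across strata. Producing such a $v$ is where Mostowski's condition on $\Sigma$ is used in an essential way; this is the technical heart, carried out in \cite{gvpoincare, livre}. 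Writing $\phi_r(\omega)$ for the point at which the integral curve of $v$ through $\omega\in L:=\sph(0,\ep)\cap X$ meets $\sph(0,r)$, for $0<r\le\ep$, and $\phi_0(\omega):=0$, and noting that every point of the cone $0*L$ is uniquely $s\omega$ with $\omega\in L$, $s\in[0,1]$, I would set $H(s\omega):=\phi_{s\ep}(\omega)$. Then $H_{|L}=\mathrm{Id}$, $H$ preserves $\rho$ since $|\phi_r(\omega)|=r$ (the vertex being reached continuously because $|\phi_r(\omega)|=r\to0$), and $H$ is a homeomorphism onto $\Bb(0,\ep)\cap X$. Since the flow of a subanalytic vector field need not be subanalytic, to get $H$ subanalytic I would replace this flow by the combinatorially equivalent radial map built cell by cell on a subanalytic cell (``panel'') decomposition of $X\cap\Bb(0,\ep)$ adapted to $\rho$, whose only used feature is the uniformly controlled metric behaviour of the flow.

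Then I would read off the metric properties from the control on $v$. A Gronwall-type estimate along the integral curves should give (a) $|\phi_a(\omega)-\phi_b(\omega)|\le C|a-b|$ for $\omega\in L$ (from $|v|\le C$), and (b) $|\phi_r(\omega)-\phi_r(\omega')|\le C\,\tfrac{r}{\ep}\,|\omega-\omega'|$ for $\omega,\omega'\in L$ (the essential scaling estimate, from the radial Lipschitz bound). Since $|\omega|=|\omega'|=\ep$ forces $\ep|s-s'|\le|s\omega-s'\omega'|$ and hence $s|\omega-\omega'|\le 2|s\omega-s'\omega'|$, (a) and (b) together show $H$ is Lipschitz. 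Unwinding the definitions, $r_s$ is the map $\phi_{t\ep}(\omega)\mapsto\phi_{st\ep}(\omega)$ (``flow inward until the radius is multiplied by $s$''), so the same two estimates — now using the lower bound $|t-t'|\,\ep=\big|\,|x|-|x'|\,\big|\le|x-x'|$, valid because $H$ preserves $\rho$ — yield $|r_s(x)-r_s(x')|\le Cs\,|x-x'|$ and the continuity of $r$. Finally, on a region $\{x\in X:\delta\le|x-x_0|\le\ep\}$ the flow of $v$ runs for bounded time and $v$ is Lipschitz there (no vertex), so $H^{-1}$ is Lipschitz on that region; likewise, for fixed $s\in(0,1]$, the outward flow $r_s^{-1}\colon\Bb(x_0,s\ep)\cap X\to\Bb(x_0,\ep)\cap X$ is Lipschitz.

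The hard part will be the construction of $v$, specifically estimate (b): the slice $X\cap\sph(0,r)$, rescaled by $\ep/r$, must be bi-Lipschitz to the link $L$ with constants \emph{independent of $r$}. This fails for the obvious candidate, the tangential part of $-\nabla\rho/|\nabla\rho|^2$: near a singular stratum — already for a cusp such as $\{(x,x^{3/2}):x\in[0,1]\}$, or a horn $\{x^2+y^2=z^3\}$ — the tangential direction of the gradient can rotate at Euclidean rate far exceeding $1/r$ as $r\to0$, so the flow does not contract tangential distances by a factor comparable to $s$. Choosing $v$ whose direction varies slowly in this scale-invariant sense is exactly what a Lipschitz stratification makes possible, and patching the local choices with a partition of unity without destroying the bounds is delicate; this is the reason the Lipschitz conic structure lies markedly deeper than its topological predecessor.
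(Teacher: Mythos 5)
This theorem is not proved in the paper; it is imported verbatim from \cite{gvpoincare} (see also \cite{livre}), so there is no ``paper's own proof'' to compare against line by line. That said, what you propose is genuinely different from the route those references take, and it also leaves a real gap at the decisive point. Valette's proof does not proceed by integrating a controlled stratified vector field in the Thom--Mather style, and it does not rest on Mostowski Lipschitz stratifications. Instead it is built cell by cell: one chooses a subanalytic cell decomposition with ``regular'' projections (L-regular cells), uses a preparation theorem for subanalytic functions to normalize the defining data on each cell, and writes down $H$ and $r_s$ by explicit inductive formulas on cells, reading off the $Cs$-Lipschitz bound from the prepared form. The cell decomposition is the engine, not an afterthought grafted on to make a non-subanalytic flow subanalytic, which is how it appears in your sketch.

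The substantive gap is your estimate (b), the claim that one can produce a stratified vector field $v$ with $d\rho(v)\equiv -1$ whose flow satisfies $|\phi_r(\omega)-\phi_r(\omega')|\lesssim \frac{r}{\ep}|\omega-\omega'|$. You attribute this to Mostowski's conditions and say it is ``exactly what a Lipschitz stratification makes possible,'' but you do not give, and the Mostowski/Parusi\'nski theory does not directly give, a bound of this form at the vertex. What Lipschitz stratifications deliver (and what is recorded in this paper as Theorem \ref{pro_existence_stratifications}) is local bi-Lipschitz triviality along positive-dimensional strata. At the zero-dimensional stratum $\{x_0\}$ that statement is vacuous, and the desired conclusion is strictly stronger and of a different nature: $H$ itself is not bi-Lipschitz (your own cusp example, and Example \ref{exa_lcs} in the paper, show this), so this is not a triviality statement in disguise. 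A Gronwall argument from a Lipschitz bound on $v$ yields at best exponential growth $e^{Lt}$, not the linear contraction $r/\ep$; to get contraction you would need a genuinely radial, scale-invariant modulus of continuity on the direction of $v$, and you give no mechanism that extracts such a modulus from Mostowski's axioms. In short, the one estimate that makes the theorem nontrivial is asserted, not derived, and is also misattributed to the cited references, which obtain it by an entirely different (cell-by-cell, preparation-theorem) argument.
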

\begin{exa}\label{exa_lcs} The mapping $H$ of the above theorem cannot be required to be bi-Lipschitz, as shown by the example
 $$X:=\{(x,y)\in [0,1]\times \R:|y|\le x^2\}.$$ 
 An explicit formula for the corresponding mapping $H$ at $x_0=(0,0)$ that can help the reader to grasp the properties listed in the above theorem is provided in \cite[Example $1.6$]{poincwirt}.  It is also relevant to mention an example which shows that this theorem fails on non polynomially bounded o-minimal structures. Let
 $$Y:=\{(x,y)\in (0,1]\times \R: |y|\le e^\frac{-1}{x} \}\cup \{(0,0)\}. $$
If there were such a mapping $H$, it is not difficult to see that the points $z:=(x,e^\frac{-1}{x})$ and $z':=(x,-e^\frac{-1}{x})$, for $x\in (0,1]$, would satisfy for $s\in (0,1]$:
$$|r_s(z)-r_s(z')|\le C e^\frac{-1}{sx}, $$
for some constant $C$ independent of $x$ and $s$.
As $|z-z'|=2e^\frac{-1}{x}$, we see that  $r_s^{-1}$ would fail to be Lipschitz for each $s\in (0,1)$.  This example enlightens the importance of polynomial boundedness and the key role played by \L ojasiewicz's inequality in the theory (see also Example \ref{exa_cor_infty}).
 \end{exa}

\begin{rem}\label{rem_lcs2}
	The Lipschitz constant of $r_s^{-1}$ (see $(ii)$) is bounded away from infinity if $s$ stays bounded away from $0$. Indeed, if $s\ge \delta >0$ then $r_\delta =r_{\frac{\delta}{s}}\circ r_s$ entails $r_s^{-1}=r_{\frac{\delta}{s}}\circ r_\delta^{-1}$,  and the  Lipschitz constants of both $r_{\frac{\delta}{s}}$ and $r_\delta^{-1}$ are bounded independently of $s\ge \delta$.
\end{rem}

\subsection{\bf Lipschitz conic structure at a point of $\delta M$.}\label{sect_lcs_mba}
We now wish to explain in which setting we will apply the above theorem and then mention a few facts that were established in \cite{trace}. 
Let us assume that $0_{\R^n}\in\delta M$ and 
set for $\eta>0$: \begin{equation}\label{eq_metaneta}M^{\eta}:=\bou(0_{\R^n},\eta)\cap M\;\; \et\;\; N^{\eta}:=\sph(0_{\R^n},\eta)\cap M.\end{equation}

  Apply Theorem \ref{thm_local_conic_structure} with $X=M\cup\{0_{\R^n}\}$ and $x_0=0_{\R^n}$. Fix $\ep>0$ sufficiently small for the statement of the theorem to hold and for $N^\ep$ to be a $\cc^\infty$ manifold, and  let $r$ denote the mapping provided by this theorem. We will assume for convenience $\ep<\frac{1}{2}$. Observe  that since $r$ is subanalytic, it is $\cc^\infty$ almost everywhere.

  Since $r_s$ is bi-Lipschitz for every $s>0$, $\jac\, r_s$ can only tend to zero if $s$ is itself going to zero (see Remark \ref{rem_lcs2}). Hence, by \L ojasiewicz's inequality (see Proposition \ref{pro_lojasiewicz_inequality}),
there are a positive integer $\nu$ and a constant $\kappa$ such that for all $s\in (0,1)$ we have for almost all $x\in M^\ep$:
\begin{equation}\label{eq_jacr_s}
 \jac \,r_s(x) \ge \frac{s^{\nu}}{\kappa}.  
\end{equation}

\noindent{\bf A few facts.} The following facts were then established in \cite[section $2.1$]{trace}. There is a constant $C$ such that:

\begin{enumerate}
\item For all $s\in (0,1)$ we have for almost all $x\in M^\ep$:  \begin{equation}\label{eq_der_r_s}
       \left|\frac{\pa r}{\pa s}(s,x)\right|\le C|x|.
      \end{equation}
       \item For each $v\in L^{p}(M^\ep)$,  $p\in [1,\infty)$, we have for all $\eta\in (0,\ep]$: \begin{equation}\label{eq_coarea_sph}
  \frac{1}{C}  \left(\int_0 ^\eta ||v||_{L^p (N^\zeta)}^p d\zeta \right)^{1/p}  \le    ||v||_{L^p (M^\eta)} \leq C \left(\int_0 ^\eta ||v||_{L^p (N^\zeta)}^p d\zeta \right)^{1/p}.
       \end{equation}
       \item There exists $\nu\in \N$ such that for each $v\in L^{p}(M^\ep)$,  $p\in [1,\infty)$,  $\eta\in(0,\ep]$, and $s\in (0,1)$:
\begin{equation}\label{ln}
||v\circ r_s||_{L^p(N^\eta)}\le Cs^{-\nu/p}||v||_{L^p(N^{s\eta})}   
\end{equation}
and 
\begin{equation}\label{lnm}
||v\circ r_s||_{L^p(M^\eta)}\le C s^{-\nu/p}||v||_{L^p(M^{s\eta})}  . 
\end{equation} 
\end{enumerate}
 
\end{subsection}

\end{section}


\begin{section}{Trace operators on subanalytic manifolds}\label{sect_prel} 
We are going to define a trace operator $\tra_{\pau M} : W^{1,p}(M)\to L^p_{loc}(\pad M)$ and show that it is continuous (in section \ref{sect_traceloc}), which requires to recall (in section \ref{sect_normal}) a few results about normal spaces established in \cite{trace}.  We will also derive from the results of \cite{trace} that, when $p$ is large, $\tra_{\pau M} u$ is actually $L^p$ for all $u\in W^{1,p}(M)$ (in section \ref{sect_trace_p_large}).

  \begin{subsection}{Normal manifolds.}\label{sect_normal}The notions of normal manifold and $\cc^\infty$ normalization, which originate in \cite{trace}, are natural generalizations to our framework of the notions of normal pseudomanifold  and normalization introduced in \cite{mcc,ih1}.

  \begin{dfn}\label{dfn_embedded}
We say that $M$ is {\bf connected at $x\in \delta M$}\index{connected at $x$} if  $\bou(x,\ep)\cap M$ is connected for all  $\ep>0$ small enough.
We say that $M$ is {\bf normal} if it is connected at each $x\in \delta M$.

We will say that it is {\bf connected along $Z\subset \delta M$} if it is connected at each point of $Z$.
The manifold $M$ will be said to be {\bf weakly normal} if there is a set $E\subset \delta M$ such that $\dim (\delta M\setminus E)\le m-2$, and along which $M$ is connected, 
which amounts to require that $M\cup \pau M$ is a Lipschitz manifold with boundary $\pa M$.  Of course, every normal manifold is  weakly normal. 

{\bf A $\cc^\infty$ normalization of $M$ }
 is  a definable $\cc^\infty$ diffeomorphism $h: \mc\to M$ satisfying $\sup_{x\in \mc} |D_x h|<\infty$ and  $\sup_{x\in M} |D_x h^{-1}|<\infty$,  with $\mc$ normal $\cc^\infty$ submanifold of $\R^k$, for some $k$. We will sometimes say that $\mc$ is a normalization of $M$.
 \end{dfn}

  It can be seen \cite{trace} that if $h:\mc \to M$ is a normalization and  $x_0\in \delta M$ then the number of points of $h^{-1}(x_0)$ coincides with the number of connected components of  $\bou(x_0,\ep)\cap M$, $\ep>0$ small.
 The following proposition gathers Propositions $3.3$ and $3.4$ of \cite{trace}, yielding existence and uniqueness of $\cc^\infty$ normalizations.

 \begin{pro}\label{pro_normal_existence}\begin{enumerate}
                                         \item 
        Every bounded definable $\cc^\infty$ manifold admits a $\cc^\infty$ normalization.
\item  \label{item_unique}   Every $\cc^\infty$ normalization  $h: \mc\to M$  extends continuously to a mapping from $\adh{\mc}$ to $\mba$.
    Moreover, if $h_1:\mc_1 \to M$ and $h_2:\mc_2 \to M$ are two $\cc^\infty$ normalizations of $M$, then $h_2^{-1} h_1$  extends to a homeomorphism between $\adh{\mc_1}$ and $\adh{\mc_2}$.                             \end{enumerate}

    \end{pro}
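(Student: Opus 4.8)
The plan is to derive everything from the Lipschitz Conic Structure Theorem (Theorem~\ref{thm_local_conic_structure}) together with the existence of locally bi-Lipschitz trivial stratifications (Theorem~\ref{pro_existence_stratifications}). For part (1), the idea is to realise $\mc$ as an embedded copy of $M$ obtained by adjoining finitely many coordinates that pull the local branches of $M$ apart. It suffices to produce bounded definable functions $g_1,\dots,g_p$ on $M$, of class $\cc^\infty$ with $\sup_M|\pa g_i|<\infty$, that \emph{separate the branches}: for every $x_0\in\delta M$ and every small $\ep$, each $g_i$ should have a limit along each connected component of $\bou(x_0,\ep)\cap M$, and distinct components should be distinguished by the resulting tuples of limits. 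Indeed, $\Phi:=(\mathrm{id}_M,g_1,\dots,g_p):M\to\R^{n+p}$ is then a definable $\cc^\infty$ embedding, so $\mc:=\Phi(M)$ is a bounded definable $\cc^\infty$ submanifold of $\R^{n+p}$ and $h:=\Phi^{-1}$ a definable $\cc^\infty$ diffeomorphism; since the projection $\pi$ onto the first $n$ coordinates satisfies $\pi\circ\Phi=\mathrm{id}_M$, one has $\sup_{\mc}|Dh|\le 1$ and $\sup_M|Dh^{-1}|=\sup_M|D\Phi|<\infty$. That $\mc$ is normal follows from the separation property together with the conic retraction of Theorem~\ref{thm_local_conic_structure} at $x_0:=\pi(y_0)\in\delta M$, $y_0\in\delta\mc$: it identifies $\bou(y_0,\rho)\cap\mc$, for $\rho$ small, with the $\Phi$-image of a piece of a \emph{single} branch $C$ of $M$ at $x_0$, and such a piece is a cone over a connected slice of the link of $C$, hence connected.

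To construct the $g_i$'s I would fix a stratification of $\mba$ compatible with $M$ and $\delta M$ and refine it, by Theorem~\ref{pro_existence_stratifications}, to a locally bi-Lipschitz trivial one. Along each stratum $S\subset\delta M$, local triviality keeps the number of local branches of $M$ constant and lets one index them coherently on a trivialising neighbourhood, on which one can write down a bounded definable function that near $\delta M$ is locally constant and equal to the branch index. Since $\delta M$ is compact, finitely many such neighbourhoods cover it, and it remains to glue these local functions into elements of $\cc^\infty(M)$ with bounded gradient, keeping their near-frontier behaviour. \emph{This gluing is the crux of the whole argument:} the subanalytic category admits no $\cc^\infty$ cut-off function, so one cannot simply multiply by a definable smooth bump, and a nonconstant definable function cannot simultaneously be locally constant near $\delta M$, of class $\cc^\infty$, and of bounded gradient. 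The way around this is to perform the construction first in the Lipschitz category --- where cut-offs are available and where $\Phi$ already yields a definable \emph{Lipschitz} normalization --- and then to invoke a definable smoothing statement with control of derivatives (or to regularise directly in a neighbourhood of $\delta M$ in a way that preserves the separating limits along the branches). This is the only non-soft point of the proof.

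For part (2) the key is the following lemma: \emph{if $N\subset\R^k$ is a bounded normal definable $\cc^\infty$ manifold and $f:N\to\R^l$ is definable, of class $\cc^\infty$, with $\sup_N|Df|<\infty$, then $f$ extends continuously to $\adh N$.} To prove it, fix $x_0\in\delta N$ and apply Theorem~\ref{thm_local_conic_structure} to $X:=N\cup\{x_0\}$: since $H$ preserves the distance to $x_0$, the retraction $r_s$ maps $\bou(x_0,\ep)\cap N$ onto $\bou(x_0,s\ep)\cap N$ and is $Cs$-Lipschitz, so the inner diameter of $\bou(x_0,\rho)\cap N$ is $O(\rho)$ as $\rho\to 0$ (the inner diameter of $\bou(x_0,\ep)\cap N$ being finite, since the set is bounded and definable). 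Because $N$ is normal, $\bou(x_0,\rho)\cap N$ is connected --- hence path-connected --- for $\rho$ small, so any two of its points are joined inside it by a path of length $O(\rho)$; as $\sup_N|Df|<\infty$, $f$ is Lipschitz for the inner metric, whence $\mathrm{diam}\,f(\bou(x_0,\rho)\cap N)=O(\rho)\to 0$. Thus $\bigcap_{\rho>0}\adh{f(\bou(x_0,\rho)\cap N)}$ is a single point, i.e. $f$ has a limit at $x_0$; continuity of the resulting extension is then routine, its graph being the closure of that of $f$, a compact definable set projecting bijectively --- hence homeomorphically --- onto $\adh N$. Applying this with $N=\mc$ and $f=h$ gives the first assertion of (2) ($\mc$ is bounded because $M$ is and $\sup_M|Dh^{-1}|<\infty$).

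Finally, for the ``moreover'': if $h_1:\mc_1\to M$ and $h_2:\mc_2\to M$ are $\cc^\infty$ normalizations, then $g:=h_2^{-1}h_1$ and $g^{-1}=h_1^{-1}h_2$ are definable $\cc^\infty$ diffeomorphisms between the \emph{normal} bounded manifolds $\mc_1,\mc_2$, with bounded derivatives --- compositions of maps with bounded derivatives, using here both supremum bounds in Definition~\ref{dfn_embedded}. By the lemma applied to $g$ on $\mc_1$ and to $g^{-1}$ on $\mc_2$, both extend continuously, to $\adh g:\adh{\mc_1}\to\adh{\mc_2}$ and $\adh{g^{-1}}:\adh{\mc_2}\to\adh{\mc_1}$; since $\adh g\circ\adh{g^{-1}}$ and $\adh{g^{-1}}\circ\adh g$ restrict to the identity on the dense subsets $\mc_2$ and $\mc_1$, they are the identity, so $\adh g$ is a homeomorphism. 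As indicated, the genuine obstacle in this whole programme lies in part (1), namely in producing the branch-separating functions as bona fide definable $\cc^\infty$ maps with bounded derivatives, the subanalytic absence of smooth cut-offs forcing a detour through the Lipschitz category.
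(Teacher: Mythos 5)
The paper does not prove this statement; it explicitly says the proposition ``gathers Propositions $3.3$ and $3.4$ of \cite{trace}'' and cites that reference, so there is no in-paper argument to compare against. I will therefore assess your proposal on its own terms.

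Your treatment of part $(2)$ is correct and self-contained. The lemma you isolate --- a definable $\cc^\infty$ map with bounded derivative on a bounded normal definable manifold $N$ extends continuously to $\adh N$ --- is proved cleanly from Theorem~\ref{thm_local_conic_structure}: the $Cs$-Lipschitz retraction $r_s$ of $\bou(x_0,\ep)\cap N$ onto $\bou(x_0,s\ep)\cap N$, combined with normality (which gives connectedness and hence finite inner diameter of $\bou(x_0,\ep)\cap N$), forces the inner diameter of $\bou(x_0,\rho)\cap N$ to be $O(\rho)$, so the oscillation of a map that is Lipschitz for the inner metric tends to $0$. The deduction of the ``moreover'' part via $g=h_2^{-1}h_1$ and $g^{-1}$, both with bounded derivatives, and the density argument for $\adh g\circ\adh{g^{-1}}=\mathrm{id}$, is also sound. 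One small point worth spelling out: normality is used twice, once for connectedness of $\bou(x_0,\rho)\cap N$ and once (implicitly) for finiteness of the inner diameter; both are needed and both hold.

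Part $(1)$, however, contains a genuine gap that you yourself flag: you never produce the branch-separating functions $g_1,\dots,g_p\in\cc^\infty(M)$ that are simultaneously \emph{definable}, of \emph{bounded gradient}, and have the right limits along each local branch. The architecture (embed $M$ by $\Phi=(\mathrm{id}_M,g_1,\dots,g_p)$, take $h=\pi|_{\mc}$, check normality of $\mc$ via the conic structure of a single branch) is plausible, but your escape route --- ``construct first in the Lipschitz category, then invoke a definable smoothing statement with control of derivatives'' --- is not a routine step. Definable smoothing of Lipschitz functions that keeps the gradient uniformly bounded up to a singular frontier \emph{and} preserves the branch limits is roughly as hard as the statement you are trying to prove; the subanalytic obstruction to $\cc^\infty$ cut-offs that you correctly identify is precisely what makes this a theorem rather than an observation. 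As written, part $(1)$ is an outline with an acknowledged hole, not a proof. Also, your side remark that ``a nonconstant definable function cannot simultaneously be locally constant near $\delta M$, of class $\cc^\infty$, and of bounded gradient'' is a slight red herring: the functions you need are not required to be locally constant near $\delta M$, only to have limits along branches, and in concrete examples (e.g.\ the slit disk with $g(x,y)=r\theta$) this is achievable without local constancy; the real obstacle is carrying out such a construction uniformly, definably and with bounded gradient on an arbitrary bounded definable $M$.

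Finally, in your verification that $\mc$ is normal, the sentence ``such a piece is a cone over a connected slice of the link of $C$, hence connected'' compresses a non-trivial argument. The separation property ensures that, near $y_0\in\delta\mc$, the only branch of $M$ whose $\Phi$-image accumulates at $y_0$ is $C$, and $\bar{\Phi}|_{C\cup\{x_0\}}$ is a homeomorphism onto $\Phi(C)\cup\{y_0\}$; one then invokes the conic structure of $C\cup\{x_0\}$ at $x_0$ (connected link, hence punctured neighbourhoods connected) and the stability of connected components of $\bou(y_0,\rho)\cap\mc$ for small $\rho$ to conclude. This can be made to work, but it should be said, since $\Phi^{-1}(\bou(y_0,\rho))$ is not a ball around $x_0$ and the $r_s$-trajectories need not stay in it when the Lipschitz constant of $\Phi$ exceeds $1$.
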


\end{subsection}

\begin{subsection}{The trace on $\pau M$.}\label{sect_traceloc} In \cite{trace} we have defined some trace operators on $W^{1,p}(M)$ in the case where $p$ is large (this is recalled in the next section). We explain in this section that we can always (i.e. for any $p$) define a trace which is {\it locally} $L^p$ in $\pau M$. As $\mba$ is a finite union of Lipschitz manifolds with boundary at every point of $\pau M$, this actually follows from the classical theory. A similar trace operator was defined in \cite{poincfried} on open subsets of $\R^n$ that are definable in an o-minimal structure.

For  $p\in [1,\infty)$,
we say that a sequence of measurable functions $u_i\in W^{1,p}(M)$ converges to $u\in W^{1,p}(M)$ {\bf in the strong $W^{1,p}_{loc}$ topology} if every $x\in M\cup \pa M$ has a neighborhood $U$ in $\R^n$ such that $u_{i|U\cap M}$ and  $\pa u_{i|U\cap M}$ respectively tend to $u_{|U\cap M}$ and $\pa u_{|U\cap M}$ in the $L^p$ norm (we say ``strong'' because it is local in $M\cup \pa M$, not only in $M$).

\subsection*{The case of a normal manifold.} We first define our trace in the case where $M$ is normal (weakly normal is actually enough for what we do in the proof below). In this case, $M\cup \pau M$ is a Lipschitz manifold with boundary.

\begin{pro}\label{pro_trace_loc} If $M$ is normal then for any  $p\in [1,\infty)$, 
 the mapping
 \begin{equation}\label{eq_cinfty_traceloc}\cc^\infty(\mba)\to  L^p_{loc}(\pau M), \quad \varphi\mapsto \varphi_{|\pau M}\,,\end{equation} 
 extends to a mapping
 $$\tra_{\pau M} :W^{1,p} (M)\to L^p_{loc}(\pau M)$$ 
 which is continuous in the strong $W^{1,p}_{loc}$ topology. 
\end{pro}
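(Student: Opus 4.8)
The plan is to reduce everything to the classical trace theorem for Lipschitz manifolds with boundary, using a partition of unity to localize and the density of $\cc^\infty(\mba)$ in $W^{1,p}(M)$ in the strong $W^{1,p}_{loc}$ topology. First I would observe that, since $M$ is normal, $M\cup\pau M$ is a Lipschitz manifold with boundary (this is recorded in Definition \ref{dfn_embedded}), and in particular every point $x_0\in\pau M$ has a neighborhood $U$ in $\R^n$ such that $U\cap(M\cup\pau M)$ is bi-Lipschitz homeomorphic to a half-space chart, with $U\cap\pau M$ corresponding to the boundary hyperplane. On such a chart the classical trace operator $W^{1,p}\to L^p$ of the boundary is available and continuous; moreover, the Lipschitz change of coordinates preserves $W^{1,p}$ and $L^p$ (up to comparison of norms involving the bi-Lipschitz constants and the bounds on the Jacobians), so we obtain a local continuous trace $\tra_U:W^{1,p}(U\cap M)\to L^p(U\cap\pau M)$ that agrees with restriction $\varphi\mapsto\varphi_{|U\cap\pau M}$ on $\cc^\infty(\mba)$.

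Next I would glue these local traces. Cover $\pau M$ by countably many such charts $U_i$ and choose a locally finite smooth partition of unity $(\chi_i)$ subordinate to $\{U_i\}$ together with one extra function supported away from $\delta M$. Given $u\in W^{1,p}(M)$, set $\tra_{\pau M}u := \sum_i \tra_{U_i}(\chi_i u)_{|U_i\cap M}$; local finiteness makes this a well-defined element of $L^p_{loc}(\pau M)$, and on $\cc^\infty(\mba)$ it telescopes to $u_{|\pau M}$ because $\sum_i\chi_i\equiv 1$ near $\pau M$. To check this is independent of the choices and genuinely extends \eqref{eq_cinfty_traceloc}, I would note that any two compatible Lipschitz half-space charts around the same boundary point induce the same trace on their overlap, which is again the classical statement.

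For the continuity in the strong $W^{1,p}_{loc}$ topology: if $u_i\to u$ in that topology, then for each compact $K\subset\pau M$ only finitely many $\chi_j$ are relevant, and for each such $j$ the sequence $\chi_j u_i$ converges to $\chi_j u$ in $W^{1,p}(U_j\cap M)$ (by the product rule, using $\chi_j\in\cc_0^\infty$ and the local $L^p$ convergence of both $u_i$ and $\pa u_i$ on $\supp\chi_j\cap M$); then the continuity of each $\tra_{U_j}$ gives $\tra_{U_j}(\chi_j u_i)\to\tra_{U_j}(\chi_j u)$ in $L^p(U_j\cap\pau M)$, hence $\tra_{\pau M}u_i\to\tra_{\pau M}u$ in $L^p(K)$. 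Since $\cc^\infty(\mba)$ is dense in $W^{1,p}(M)$ and the strong $W^{1,p}_{loc}$ topology is weaker than the $W^{1,p}(M)$ topology, this also shows $\tra_{\pau M}$ is the unique such continuous extension of \eqref{eq_cinfty_traceloc}.

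The only real point requiring care — the ``main obstacle'' — is verifying that a bi-Lipschitz change of chart transports the classical $W^{1,p}$ trace faithfully, i.e. that the trace is a genuinely bi-Lipschitz-invariant notion and not merely a smooth-coordinate one; this rests on the fact that a bi-Lipschitz homeomorphism between open subsets of $\R^m$ induces a bounded isomorphism on $W^{1,p}$ (differentiable a.e. by Rademacher, chain rule valid a.e., Jacobian bounded above and below) and likewise on $L^p$ of the boundary, so that the classical trace intertwines with these isomorphisms. Everything else is bookkeeping with partitions of unity and the product rule.
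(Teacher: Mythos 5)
Your proof follows essentially the same route as the paper: since $M$ is normal, $M\cup\pau M$ is a Lipschitz manifold with boundary, so the classical trace theory applies locally, and one glues with a partition of unity and passes to the strong $W^{1,p}_{loc}$ limit. The paper's own proof is exactly this, compressed into a single inequality $||\varphi||_{L^p(K,\hn^{m-1})}\le C||\varphi||_{W^{1,p}(V)}$ for smooth $\varphi$ together with density.

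One small caveat on the density you invoke: you assert that $\cc^\infty(\mba)$ is dense in $W^{1,p}(M)$, but this is not available for arbitrary $p$ (it is part of what the paper proves only for $p$ large, Theorem~\ref{thm_trace}, or for $p\le\pms$ under a normality hypothesis, Corollary~\ref{cor_densite_A_vide}). What the paper uses — and what your argument actually needs — is density of the \emph{larger} space $\cc^\infty(M\cup\pau M)$ in the \emph{weaker} strong $W^{1,p}_{loc}$ topology, which holds because $M\cup\pau M$ is a Lipschitz manifold with boundary (mollify near $\pau M$, no need to control behavior at $\delta M\setminus\pau M$). Since your construction defines the trace directly via the local classical operators and a partition of unity, this slip only touches your remark on uniqueness of the extension, not the existence or continuity; replacing $\cc^\infty(\mba)$ by $\cc^\infty(M\cup\pau M)$ there fixes it.
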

\begin{proof}
 The classical theory on Lipschitz manifolds with boundary implies that for every compact subset $K$ of $\pau M$ and every open subset $V$ of $M\cup \pau M$ containing $K$, there is a constant $C$ (depending on $K$ and $V$) such that for all $\varphi \in \cc^\infty(M\cup \pau M)$ we have $$ ||\varphi||_{L^p(K,\hn^{m-1})} \le C||\varphi||_{W^{1,p}(V)}.$$
 As   $M\cup \pau M$ is a manifold with boundary, the space $\cc^\infty(M\cup \pau M)$ is dense in $W^{1,p}(M)$  in the strong $W^{1,p}_{loc}$ topology,  and the mapping defined in (\ref{eq_cinfty_traceloc}) extends to a continuous mapping on $W^{1,p} (M)$. 
\end{proof}

 In the case of non necessarily normal manifolds, we are going to define a trace operator which is ``multi-valued'' (since $M$ may have several connected components locally at a generic frontier point).  

\subsection*{General definition of the trace.}
There are definable subsets $S_1,\dots, S_k$ of $\pau M$ satisfying  $\hn^{m-1}(\pau M \setminus\bigcup_{i=1}^k S_{i})=0$ and such that each $S_i$ has a neighborhood $U_i$ in $\R^n$ for which each connected component of $U_i\cap M$ is the interior of a manifold with boundary $S_i$ (such a covering is induced for instance by the images of the simplices of a triangulation of $M$ compatible with $\pa M$).  We denote by $Z_{i,1}, \dots, Z_{i,l_i}$  these connected components.

We first define the trace on $S_i$, denoted $\tra_{S_i}$, $i\in \{1,\dots,k\}$.
 Let $\tra^j_{i}:W^{1,p}(Z_{i,j})\to L^p_{loc}(S_i,\hn^{m-1})$ denote the operator obtained by applying Proposition \ref{pro_trace_loc} to $Z_{i,j}$  (which is normal if $U_i$ is sufficiently small), and then define  $$\tra_{S_i} : W^{1,p}(M) \to L^p_{loc} (S_i,\hn^{m-1})^l,\qquad  l=\underset{i=1,\dots,k}\max l_i\, ,$$  by setting for $u\in W^{1,p}(M)$ and $x\in S_i$: 
\begin{equation}\label{eq_tral}\tra_{S_i} u(x) := ( \tra_{i}^1 u_{|Z_{i,1}}(x), \dots , \tra_{i}^{l_i} u_{|Z_{i,l_i}}(x), 0,\dots, 0).\end{equation}
    Here, we add $(l-l_i)$ times the zero function because  it will be convenient that the trace has the same number of components for all $i$.   Note that, as $M\cup \pa M$ is a finite union of manifolds with boundary at every point of $\pa M$, $\tra_{S_i} u(x)$ is actually $L^p$ on a neighborhood of every point of $\adh{S_i}\cap\pa M $.
    
    This mapping of course depends on the way the elements $Z_{i,1},\dots, Z_{i,l_i}$ are enumerated. However,
up to permutation of the $l_i$ first components,  $ \tra_{S_i} u(x)$ is unique, and it only depends on the germ of $u$ near $S_i$. In particular, the kernel of this mapping is independent of any choice. 

We now can define $\tra_{\pau M}u$ as the function which coincides with $\tra_{S_i} u$ on $S_i\subset \pau M$, for each $i$. Since $\pau M\setminus \bigcup_{i=1}^ k S_i$ is $\hn^{m-1}$-negligible, this defines a function (almost everywhere) on $\pau M$. Moreover, as  the germ of $\tra_{i}^j u_{|Z_{i,j}}$ at a point of $\adh{S_i}\cap \pau M$ is $L^p$ for all $i$ and $j$, it is clear that this defines an element of $L^p_{loc}(\pau M)^l$. The above proposition yields that the mapping  $$\tra_{\pau M}:W^{1,p}(M)\to L^p_{loc}(\pau M)^l$$ is continuous  in the strong $W^{1,p}_{loc}$ topology.  

  For $p\in [1,\infty)$, we then set for an arbitrary open subset $Z$ of $\pau M$:\begin{equation*}\label{eq_wpa}
                              W^{1,p} (M,Z) :=\{ u\in W^{1,p}(M):\tra_{\pau M} u\equiv 0 \mbox{ on } Z\}.
                              \end{equation*}      
                              
                     We will see that $W^{1,p}(M,\pau M)$ and $W^{1,p}_0(M)$ coincide for small values of $p$ (Corollary \ref{cor_densite_A_vide}). We deal with this issue in the case ``$p$ large'' in the next section.
\begin{rem}
 It directly follows from our local definition of the trace that  a function $u\in W^{1,p}(\omd)$ belongs to $W^{1,p} (\omd,\pa \omd) $ if and only if it satisfies for all $\varphi\in \cc^\infty_{\omd\cup\pad \omd}(\adh{\omd})$ and all $i=1,\dots, n$: 
        \begin{equation}\label{eq_trace_nulle} <u,\pa_i \varphi>=-<\pa_i u,\varphi>. \end{equation}
\end{rem}
                                                                 
\begin{exa}\label{exa_trace}\cite{poincfried}
It is easy to produce examples of definable domains $\Omega$ admitting a function $u\in W^{1,p}(\omd)$ such that $\tra_{\pa \omd}u$ is not $L^p$ on the boundary. Let for $k>2$, 
 $$\Omega_k:=\{(x,y)\in (0,1)^2:y < x^k\},$$ and let $u(x,y):=\frac{1}{x}.$ 
 Clearly,  
 $u\in W^{1,p}(\omd_k)$, for each $p\in [1,\frac{k}{2}]$, while $\tra_{\pa \Omega_k} u$ is not $L^p$ on $\pa \omd_k$.
\end{exa}          
                              
                                        \end{subsection}


\begin{subsection}{The trace when $p$ is large.}\label{sect_trace_p_large} We now wish to explain that in the case where $p$ is large, the results of \cite{trace} yield that the mapping $\tra_{\pau M}$, defined in the preceding section, is continuous in the $L^p$ norm (see Corollary \ref{cor_trace_bounded} below and Example \ref{exa_trace} just above).

\begin{thm}\label{thm_trace}\cite{trace}
Assume that $M$ is normal and let $A$ be any definable subset of $\delta M$. For all $p\in [1,\infty)$ sufficiently large, we have:
\begin{enumerate}[(i)]
\item
$\cc^\infty(\adh{M})$ is dense in $W^{1,p}(M)$.
\item The linear operator
\begin{equation*}\label{trace}
\cc^\infty(\adh{M})\ni \varphi \mapsto \varphi_{|A}\in L^p(A,\hn^k), \qquad k:=\dim A,
\end{equation*}
is continuous in the norm $||\cdot ||_{W^{1,p}(M)}$ and thus extends to a mapping $\tra_A:W^{1,p}(M)\to L^p(A,\hn^k)$.
\item If  $\st$ is a stratification of $A$, then $\cc^\infty_{\adh{M}\setminus\adh{A}}(\mba)$ is a dense subspace of
$\bigcap\limits_{Y\in\st}\ker \tra_Y$.
\end{enumerate}\end{thm}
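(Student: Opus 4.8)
Everything is local on the compact set $\mba$; the trace $\tra$ being a local operator by its construction in Section~\ref{sect_traceloc}, the plan is to localise and then feed each local picture into the Lipschitz Conic Structure Theorem (Theorem~\ref{thm_local_conic_structure}), the estimates (\ref{eq_jacr_s})--(\ref{lnm}), and \L ojasiewicz's inequality (Proposition~\ref{pro_lojasiewicz_inequality}); the threshold for ``$p$ large'' will be the supremum of the \L ojasiewicz exponents these tools produce. First I would fix a triangulation of $\mba$, equivalently a stratification compatible with $M$, $\pau M$, $\adh A$, $A$ and, in case (iii), with every stratum of $\st$, refined by Theorem~\ref{pro_existence_stratifications} so as to be locally bi-Lipschitz trivial, together with a subordinate partition of unity $(\theta_\alpha)$, $\theta_\alpha\in\cc^\infty(\mba)$. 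Since multiplication by $\theta_\alpha$ is bounded on $W^{1,p}(M)$, obeys the Leibniz rule, and preserves $\cc^\infty(\mba)$, the spaces $\bigcap_{Y\in\st}\ker\tra_Y$ and the relevant $W^{1,p}$-closures, it is enough to establish (i)--(iii) for $u$ supported near one point $x_0\in\mba$. If $x_0\in M$ this is interior mollification; if $x_0\in\pau M$, then, $M$ being normal, $M\cup\pau M$ is locally a Lipschitz manifold with boundary and the classical theory gives (i), the $L^p$-continuity of $\varphi\mapsto\varphi_{|\pau M}$ for every $p$, and --- together with the singular case below --- the trace onto lower-dimensional definable subsets of $\pau M$. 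So one is left with $x_0\in\delta M\setminus\pau M$, and here I would induct on $\dim M$, the local bi-Lipschitz triviality letting one split off the directions of the stratum through $x_0$ and lower the dimension.

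At such an $x_0$ I would apply Theorem~\ref{thm_local_conic_structure} (to $X=\mba$, the stratification refined to be compatible with the cone structure), obtaining $\ep>0$, the Lipschitz subanalytic homeomorphism $H$, the retraction $r$, the $Cs$-Lipschitz dilations $r_s$, with $N^\ep$ a genuine $\cc^\infty$ manifold, and the facts (\ref{eq_jacr_s})--(\ref{lnm}). Through $H$ one identifies $M^\ep\setminus\{x_0\}$ with the cylinder $(0,\ep]\times N^\ep$ and $M^{s\ep}$ with $(0,s\ep]\times N^\ep$, but only Lipschitzly, \emph{not} bi-Lipschitzly (Example~\ref{exa_lcs}): the sole control across scales is $\jac r_s\gtrsim s^\nu$, the $Cs$-Lipschitz bound on $r_s$, and the fact that composing with $r_s$ distorts $L^p$-norms on spheres and balls by at most $s^{-\nu/p}$. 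What makes ``$p$ large'' work is that every distortion entering the argument is a power $s^{\pm(\text{integer})/p}$, hence tends to $1$ as $p\to\infty$; \emph{producing these constants explicitly, uniformly in the scale, and checking that the ensuing dyadic series in the scale converge once $p$ passes the \L ojasiewicz thresholds is the core of the argument and the step I expect to be the main obstacle.}

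For (i): I would first show $u\circ r_s\to u$ in $W^{1,p}(M^\ep)$ as $s\uparrow1$ ($L^p$-convergence from $r_s\to\mathrm{id}$ uniformly and $\jac r_s\to1$; for gradients, $\pa(u\circ r_s)=(Dr_s)^{\!\top}(\pa u)\circ r_s$ with $|Dr_s|\lesssim s\le1$ and (\ref{lnm}) giving a uniform $L^p$-bound on $(\pa u)\circ r_s$ near $s=1$, whence convergence by density plus uniform boundedness, the case $u\in\cc^\infty(\mba)$ being clear by dominated convergence). Then I would approximate a fixed $u\circ r_s$: away from $x_0$ (and from the lower-dimensional strata, treated inductively) by mollifying in the cylinder variables of the conic model; near $x_0$, using that for $p$ large $u\circ r_s$ is continuous up to $x_0$ by Morrey's embedding (Corollary~\ref{cor_morrey}, itself resting on the same estimates), by replacing it on $M^\delta$ with the constant $(u\circ r_s)(x_0)$ and interpolating on $M^{2\delta}\setminus M^\delta$, the $W^{1,p}$-error being dominated, via the sharp local Sobolev inequality on the conic piece, by $\|\pa(u\circ r_s)\|_{L^p(M^{C\delta})}\to0$. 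A diagonal argument gives (i). For (ii): localising near $x_0\in A$, I would split $A\cap\overline{M^\ep}$ into the dyadic radial pieces $A_j:=A\cap(\overline{M^{2^{-j}\ep}}\setminus M^{2^{-j-1}\ep})=r_{2^{-j}}(A_0)$ ($A_0$ the $j{=}0$ piece); $A_0$ lies in the frontier of a thick annulus bounded away from $x_0$ where the classical trace on Lipschitz manifolds with boundary applies (for $p>m-\dim A$), and conjugating that bound through $r_{2^{-j}}$ --- the Jacobian on the annulus controlled by (\ref{eq_jacr_s}), that on $A_0$ by the $C2^{-j}$-Lipschitz bound, the $\hn^k$-measure of $A_0$ not blowing up since $H$ is Lipschitz --- and absorbing low-order terms by Morrey's embedding yields $\|\varphi\|_{L^p(A_j,\hn^k)}^p\lesssim\omega_j\,\|\varphi\|_{W^{1,p}(\Pi'_j)}^p$ for conic neighbourhoods $\Pi'_j\subset M^\ep$ of uniformly bounded overlap; for $p$ past the threshold the $\omega_j$ are summable against the $\|\varphi\|_{W^{1,p}(\Pi'_j)}^p$, and summing gives $\|\varphi\|_{L^p(A,\hn^k)}\lesssim\|\varphi\|_{W^{1,p}(M)}$, hence, with (i), the extension $\tra_A$.

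For (iii): given $u\in\bigcap_{Y\in\st}\ker\tra_Y$, note that since $p$ is large $u$ has a continuous representative on $\mba$ (Morrey) for which $\tra_Y u$ is simply the restriction to $Y$; as this vanishes a.e.\ on each manifold $Y$, the representative vanishes on $\bigcup_{Y\in\st}Y=A$, hence by continuity on $\adh A$. A Hardy-type inequality relative to $\adh A$ --- $\|d(\cdot,\adh A)^{-1}u\|_{L^p(M^\ep)}\lesssim\|u\|_{W^{1,p}(M^\ep)}$ for $u$ vanishing on $\adh A$, valid for $p$ large by transporting the one-dimensional Hardy inequality of exponent larger than the codimension of $\adh A$ in $M$ through the conic model --- then shows $\chi_\eta u\to u$ in $W^{1,p}$ as $\eta\to0$, with $\chi_\eta\in\cc^\infty(\mba)$ vanishing on $\{d(\cdot,\adh A)<\eta\}$, equal to $1$ on $\{d(\cdot,\adh A)>2\eta\}$ and $|\pa\chi_\eta|\lesssim d(\cdot,\adh A)^{-1}$ on its support. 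Each $\chi_\eta u$ is supported off a neighbourhood of $\adh A$; applying (i) to $\chi_\eta u$ and multiplying the resulting approximants by a fixed smooth cut-off equal to $1$ on the support of $\chi_\eta u$ and still supported off $\adh A$ produces approximants in $\cc^\infty_{\adh M\setminus\adh A}(\mba)$, and a diagonal argument completes (iii).
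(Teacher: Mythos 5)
The paper does not actually prove Theorem~\ref{thm_trace}: the result is cited from reference~\cite{trace} (``Trace operators on bounded subanalytic manifolds''), and no proof appears in the present text. So a line-by-line comparison with ``the paper's own proof'' is not possible. What can be said is that your sketch relies on exactly the toolbox that the present paper indicates lies behind the result --- the Lipschitz Conic Structure Theorem~\ref{thm_local_conic_structure}, the retraction $r_s$ with the \L ojasiewicz Jacobian bound (\ref{eq_jacr_s}), the composition estimates (\ref{ln})--(\ref{lnm}), locally bi-Lipschitz trivial stratifications, and a downward induction on the dimension of singular strata --- and it is structurally parallel to the proof the paper \emph{does} give of the small-$p$ analogue, Theorem~\ref{thm_dense_lprime}. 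The author also hints that the argument in \cite{trace} proceeds by ``integrating along the trajectories of $r_s$'' via an operator $\Theta^M$ (compare (\ref{theta_cont})--(\ref{eq_u_circ_r_sm})); your dyadic treatment of (ii) and your Hardy-type route to (iii) are plausible substitutes for that, but they are reconstructions rather than matches of the actual argument.

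Two places in the sketch deserve to be flagged as genuine gaps rather than routine omissions. First, in (i) you deduce $u\circ r_s\to u$ in $W^{1,p}$ ``by density plus uniform boundedness, the case $u\in\cc^\infty(\mba)$ being clear by dominated convergence,'' but density of $\cc^\infty(\mba)$ in $W^{1,p}(M)$ is precisely what (i) asserts, so that density cannot be invoked; you would need a dense subspace (e.g.\ Meyers--Serrin's $\cc^\infty(M)\cap W^{1,p}(M)$, or compactly supported truncations against the annular exhaustion) on which the convergence of both $u\circ r_s$ and $(Dr_s)^{\!\top}(\pa u)\circ r_s$ can be proved directly, and convergence of $Dr_s\to I$ must be handled with a Vitali-type argument since a dominating function independent of $s$ is not immediately available. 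Second, your near-vertex approximation step (replace $u\circ r_s$ on $M^\delta$ by the constant $(u\circ r_s)(x_0)$ and interpolate) requires precisely the estimate $||u-u(x_0)||_{L^p(M^\delta)}\lesssim\delta\,||u||_{W^{1,p}(M^{C\delta})}$, i.e.\ (\ref{theta_contm}), which in the paper is justified through the $\Theta^M$ machinery and a trace statement at $x_0$; invoking it here risks circularity with (ii), and a self-contained version of this estimate (or of your Hardy inequality in (iii), which plays the same role) is the real technical heart that the sketch leaves unproved. Beyond these points, the overall plan --- localise, reduce to $x_0\in\delta M\setminus\pau M$, split radially via $H$, and let the \L ojasiewicz exponents dictate the ``$p$ large'' threshold --- is sound and consonant with the methods of the paper.
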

Using a normalization,  it is then not difficult to extend the notion of trace to the case of a not necessarily normal manifold (see \cite{trace} for details). Like in the previous section, this trace operator takes values in  $L^p(A,\hn^k)^l,$ for some $l\ge 1$.
We then have \cite[Corollary 3.7]{trace}:

\begin{cor}\label{cor_trace_bounded}
 Let $A\subset \delta M$ be a subanalytic set of dimension $k$.  For $p\in [1,\infty)$ sufficiently large, the linear operator $$\tra_A :W^{1,p}(M) \to L^p(A,\hn^k)^l,$$
  is continuous.
\end{cor}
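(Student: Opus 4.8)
The plan is to deduce Corollary~\ref{cor_trace_bounded} from Theorem~\ref{thm_trace} by reducing the non-normal case to the normal one via a $\cc^\infty$ normalization $h:\mc\to M$, whose existence and good properties (in particular the uniform bounds $\sup_{\mc}|D h|<\infty$ and $\sup_M|D h^{-1}|<\infty$) are guaranteed by Proposition~\ref{pro_normal_existence}. First I would record the elementary but crucial point that these two-sided derivative bounds, together with the fact that $\mc$ and $M$ are $m$-dimensional submanifolds, force $\jac\, h$ and $\jac\, h^{-1}$ to be bounded above and below away from $0$; hence $h$ induces, for every $p\in[1,\infty)$, a Banach-space isomorphism $W^{1,p}(M)\to W^{1,p}(\mc)$, $u\mapsto u\circ h$, with norm and inverse norm controlled by the constants attached to $h$. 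This is the change-of-variables step, entirely routine.

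Next I would handle the frontier set $A$. By Proposition~\ref{pro_normal_existence}(\ref{item_unique}), $h$ extends to a continuous map $\bar h:\adh{\mc}\to\mba$, and over a generic point $x_0\in A$ the fibre $\bar h^{-1}(x_0)$ has exactly $l_{x_0}$ points, $l_{x_0}$ being the number of local connected components of $M$ at $x_0$; set $l:=\max_{x_0\in A}l_{x_0}$, which is finite by definability. Choosing a stratification of $A$ (refining if necessary) one arranges that, stratum by stratum, $\bar h^{-1}(A)$ splits into finitely many definable pieces $\tilde A_1,\dots,\tilde A_l$ (some possibly empty over a given point), each mapped bi-Lipschitz-equivalently by $\bar h$ onto its image in $A$ and each sitting in the frontier of exactly one ``sheet'' of $\mc$ near the relevant point; since $\mc$ is normal, Theorem~\ref{thm_trace}(ii) applies on $\mc$ to give continuous trace operators $\tra_{\tilde A_j}:W^{1,p}(\mc)\to L^p(\tilde A_j,\hn^k)$ for $p$ large. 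One then defines $\tra_A u:=\bigl(\tra_{\tilde A_1}(u\circ h),\dots,\tra_{\tilde A_l}(u\circ h)\bigr)$, pushed back to $A$ along $\bar h$; because $\bar h$ restricted to each $\tilde A_j$ is bi-Lipschitz, the push-forward is an isomorphism of $L^p(\tilde A_j,\hn^k)$ with $L^p$ of its image, so each component is $L^p$ and the combined map lands in $L^p(A,\hn^k)^l$. Composing, $\tra_A=\bigl(\text{push-forward}\bigr)\circ\bigl(\tra_{\tilde A_\bullet}\bigr)\circ\bigl(u\mapsto u\circ h\bigr)$ is a composition of bounded linear maps, hence bounded, and on $\cc^\infty(\mba)$ it agrees with the naive restriction $\varphi\mapsto\varphi_{|A}$ (each component recovering the value of $\varphi$ on the corresponding sheet), so it is the genuine extension of the trace.

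The main obstacle is not any single estimate but the bookkeeping in the previous paragraph: one must make sure that the decomposition of $\bar h^{-1}(A)$ into the pieces $\tilde A_j$ is compatible with a stratification (so that Theorem~\ref{thm_trace}(ii), stated for stratified $A$, can be invoked), that over each stratum the local sheet structure of $M$ is constant so that $l$ is globally defined, and that the $\hn^k$-negligible ``bad'' subset of $A$ (where $\bar h$ is not nicely behaved, or where $M$ fails to be a manifold with boundary) does not affect the $L^p$ statement. All of this is exactly the content of the reduction already carried out in detail in \cite{trace} for the trace operator itself, so I would state that the present corollary follows by the same argument applied verbatim, with the only new input being the two-sided derivative bound on $h$ that transfers $W^{1,p}$-membership and its norm both ways. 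In short: the corollary is ``Theorem~\ref{thm_trace}(ii) plus a change of variables along a $\cc^\infty$ normalization,'' and I would present it that concisely, referring to \cite[Corollary 3.7]{trace} for the full details.
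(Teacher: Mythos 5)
Your proposal is correct and follows the same route the paper intends: the paper merely states that the non-normal case is reduced to Theorem~\ref{thm_trace} via a $\cc^\infty$ normalization and cites \cite[Corollary~3.7]{trace} for the details, and your argument (change of variables along $h$ using the two-sided derivative bounds of Proposition~\ref{pro_normal_existence}, stratifying $A$ so that $\bar h^{-1}(A)$ splits into sheets $\tilde A_j$, then applying Theorem~\ref{thm_trace}(ii) on $\mc$ and pushing forward) is precisely that reduction. No gap to report.
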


Of course, since the trace is induced by restriction of functions to $\pau M$, in the case $A=\pau M$, this operator coincides with the trace operator defined in the previous section.
 We thus get that the mapping $\tra_{\pau M}:W^{1,p}(M)\to L^p(\pau M)^l$ is continuous for large values of $p\in [1,\infty)$. The situation is however here a bit more complicated than on Lipschitz manifolds with boundary, as it is not true that $\cc_0^\infty(M)$ is dense in $\ker \tra_{\pau M}$, i.e., as $W_0^{1,p}(M)$ can be strictly smaller than $W^{1,p}(M,\pa M)$:
\begin{exa}\label{exa_disk}
 If $M$ is the open unit disk in $\R^2$ centered at $0$ from which was taken off the center then every element of $\cc_0^\infty(M)$ must vanish in the vicinity of the origin. For every sufficiently large value of $p$, $\tra_{\{0\}} :W^{1,p}(M)\to L^p(\{0\})$ is well-defined (by Corollary \ref{cor_trace_bounded}), and the closure of $\cc_0^\infty(M)$ is included in the kernel of this linear mapping. Therefore, a smooth function $u$ which is $1$ near the origin and that has support in the disk of radius $\frac{1}{2}$ belongs to $W^{1,p}(M,\pa M)$ and not to $W_0 ^{1,p}(M)$.
\end{exa}
We however have \cite[Corollary $3.9$]{trace}:
\begin{cor}\label{cor_densite_nonnormal} If $\Sigma$ is a stratification of a subanalytic subset $A$ of $\delta M$ such that $h^{-1}(A)$ is dense in $\delta \mc$, for some (and hence for any) normalization $h:\mc\to M$, then
 $ \cc^\infty _0(M)$ is dense in $
\underset{S\in \Sigma}{\bigcap}\ker \tra_S $ for all $p\in [1,\infty)$ sufficiently large.
\end{cor}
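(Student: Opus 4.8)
The plan is to read the statement off Theorem~\ref{thm_trace}(iii) after arranging its hypotheses, first for normal $M$ and then in general via a normalization. Set $A:=\bigcup_{S\in\Sigma}S$, a definable dense subset of $\delta M$. Since $M$ is a submanifold of $\R^n$ it is locally closed, hence open in $\mba$, so $\delta M$ is closed and $\adh A=\delta M$, whence $\mba\setminus\adh A=M$. Also $\cc^\infty_{M}(\mba)=\cc^\infty_0(M)$: an element of $\cc^\infty(\mba)$ whose support has compact closure inside $M$ restricts to a compactly supported smooth function on $M$, and conversely every element of $\cc^\infty_0(M)$ extends by $0$ across $\delta M$ to such a function. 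Thus, \emph{when $M$ is normal}, applying Theorem~\ref{thm_trace}(iii) to $A$ with the stratification $\Sigma$ gives at once that $\cc^\infty_0(M)$ is dense in $\bigcap_{S\in\Sigma}\ker\tra_S$ for all $p$ large enough.

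For a general $M$ I would reduce to the normal case by fixing a $\cc^\infty$ normalization $h:\mc\to M$ (Proposition~\ref{pro_normal_existence}). As $\sup|Dh|<\infty$ and $\sup|Dh^{-1}|<\infty$, the map $u\mapsto u\circ h$ is a Banach-space isomorphism $W^{1,p}(M)\to W^{1,p}(\mc)$ for every $p\in[1,\infty)$ carrying $\cc^\infty_0(M)$ onto $\cc^\infty_0(\mc)$. Let $\hti:\adh{\mc}\to\mba$ be the continuous extension of $h$ from the second part of Proposition~\ref{pro_normal_existence}; it is finite-to-one with $\hti(\delta\mc)=\delta M$, the cardinality of $\hti^{-1}(x_0)$ equalling the number of local connected components of $M$ at $x_0$. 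By construction in \cite{trace}, the multi-valued trace $\tra_S$ of the non-normal manifold $M$ is obtained by pulling $S$ back along $\hti$ and recording the finitely many one-sheeted traces on $\mc$; so, putting $\tilde A:=\hti^{-1}(A)\cap\delta\mc$ and choosing a stratification $\tilde\Sigma$ of $\tilde A$ refining its decomposition into these sheets, one gets $u\in\bigcap_{S\in\Sigma}\ker\tra_S$ iff $u\circ h\in\bigcap_{\tilde S\in\tilde\Sigma}\ker\tra_{\tilde S}$, while $\tilde A$ is dense in $\delta\mc$ since $A$ is dense in $\delta M$. Applying the normal case, already proved, on the normal manifold $\mc$ to the stratification $\tilde\Sigma$ of $\tilde A$, I obtain that $\cc^\infty_0(\mc)$ is dense in $\bigcap_{\tilde S\in\tilde\Sigma}\ker\tra_{\tilde S}$ for $p$ large, and transporting this back through $u\mapsto u\circ h$ gives the corollary.

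I expect the delicate step to be the bookkeeping in the second paragraph, which I would take from \cite{trace}: that the normalization faithfully identifies $\bigcap_{S\in\Sigma}\ker\tra_S$ on $M$ with the intersection of the one-sheeted trace kernels on $\mc$, and that density of $A$ in $\delta M$ passes to density of $\tilde A$ in $\delta\mc$. These hinge on the geometry of $\hti$ --- its fibres counting local branches, the dimension-preservation and finiteness of definable finite-to-one maps, and the local triviality of $M$ along $\pau M$ --- and they are exactly what \cite{trace} establishes in order to define the trace on non-normal manifolds. Once they are in hand, the rest is a direct appeal to Theorem~\ref{thm_trace}(iii), together with the trivial observation that $\delta M$ and $\delta\mc$ are closed.
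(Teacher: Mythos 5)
The paper does not supply its own proof here; the corollary is simply cited as \cite[Corollary~3.9]{trace}. Your argument is the natural route and, I believe, the one \cite{trace} takes. The normal case is a correct reading of Theorem~\ref{thm_trace}(iii): putting $A:=\bigcup_{S\in\Sigma}S$, you rightly observe that $\delta M$ is closed (as $M$ is locally closed), so $\adh A=\delta M$, hence $\mba\setminus\adh A=M$ and $\cc^\infty_{\mba\setminus\adh A}(\mba)=\cc^\infty_M(\mba)=\cc^\infty_0(M)$; the last identification is worth a sentence, as you give, since $\cc^\infty_M(\mba)$ is defined via $\adh{\supp u}\subset M$ and boundedness of $M$ makes this equivalent to compact support in $M$. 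The reduction of the general case to the normal one via a normalization $h:\mc\to M$ is also the right move: $u\mapsto u\circ h$ is indeed a Banach-space isomorphism $W^{1,p}(M)\to W^{1,p}(\mc)$ carrying $\cc_0^\infty(M)$ onto $\cc_0^\infty(\mc)$, and the multi-valued trace on $M$ is defined in \cite{trace} precisely by the one-sheeted traces on $\mc$, so the identification of the two kernel intersections is correct.

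The only step you leave as a gesture rather than a verification is the density of $\tilde A:=\hti^{-1}(A)\cap\delta\mc$ in $\delta\mc$. This does hold, but it deserves a line: $A$ is definable (a finite union of strata) and dense in $\delta M$, so $\delta M\setminus A$ is definable with empty interior in $\delta M$; since $\hti|_{\delta\mc}:\delta\mc\to\delta M$ is a definable, finite-to-one, surjective map which is a local homeomorphism over $\pau M$, the preimage $\hti^{-1}(\delta M\setminus A)\cap\delta\mc$ is definable with empty interior in $\delta\mc$, i.e.\ $\tilde A$ is dense. This is a small missing verification, not a flaw in the approach, and the rest of the bookkeeping you attribute to \cite{trace} is indeed the content of the construction of the non-normal trace there. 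Overall the proposal is correct and matches the source the paper cites.
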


When $M$ is an open subset of $\R^n$, it is easy to see that the above condition ``$h^{-1}(A)$ is dense in $\delta \mc$''  merely amounts to ``$A$ is dense in $\delta M$''.
 In the light of Example \ref{exa_disk} and Corollary \ref{cor_densite_nonnormal}, we thus understand the only problem for $\cc_0^\infty(\omd)$ to be dense in  $\ker \tra_{\pa \omd}$ if $\omd$ is an open subanalytic subset of $\R^n$ (for $p$ large): there might be points of $\delta \omd$ at which this set has dimension less than $(n-1)$.
This  leads us to the following natural notion.

\begin{dfn}\label{dfn_fenced}
 We say that an open subset $\omd$ of $\R^n$   is {\bf fenced} if $\delta \omd=\adh{\pa \omd}$.
\end{dfn}
When this occurs, by Corollary \ref{cor_densite_nonnormal}, we have for all $p$ sufficiently large:
  \begin{equation}\label{eq_fenced}
                                                                                                        W^{1,p}(\omd,\pa \omd)= W_0 ^{1,p}(\omd).
                                                                                                       \end{equation}

\begin{rem}\label{rem_fenced}For an open bounded subset $\omd$ of $\R^n$, the set
\begin{equation}\label{eq_omega_tilde}\omt:=\adh{\Omega}\setminus \adh{\pa \omd}\end{equation} is an open bounded definable subset of $\R^n$ which is always  fenced. In addition,   $\Omega$ is  fenced if and only if   $\omt=\Omega$. In particular, $\tilde{\omt}=\omt$. 
\end{rem}

\end{subsection}
\end{section}


\begin{section}{Some preliminary local estimates}
In this section, we will be working in the setting of section \ref{sect_lcs_mba}, with $\ep$, $r_s$, $M^\eta$, and $N^\eta$, as defined there (see (\ref{eq_metaneta})).
\begin{subsection}{The retraction $R_t^\eta$.} Let us remark that the definition of $r_s(x)$ that we gave in Theorem \ref{thm_local_conic_structure} actually makes sense for each $(s,x)\in [0,\infty)\times \mep$ satisfying $s\le \frac{\ep}{|x|}$. We therefore can define a mapping $R:P \to M$, where $P=\{(t,x)\in \R\times M: 1\le t\le \frac{\ep}{|x|} \}$, by
$$R(t,x):=H(tH^{-1}(x)).$$
We will denote by $R^\eta:[1,\frac{\ep}{\eta}] \times N^\eta \to M$ and $R^\eta_t: N^{\eta} \to N^{t\eta}$ the respective restrictions of $R$ and $R(t,\cdot)$.

We do not use the notation $r(s,x)$ when $s>1$,  since the properties of $R$ will be very different from the properties of $r$ listed in  Theorem \ref{thm_local_conic_structure}. 
The mapping $R$ should rather be regarded as an inverse mapping, as it indeed directly follows from the definitions that the mapping $R_t^\eta:N^\eta\to N^{t\eta}$, induced by $R$,  is the inverse of the mapping $r_{\frac{1}{t}}^{t\eta}: N^{t\eta}\to N^{\eta}$ induced by $r$. 
Although $R$ is not Lipschitz, we have the following estimates of its first derivative.
 \end{subsection}

\begin{subsection}{Some properties of $R_t^\eta$.} There is a constant $C$ such that for all $\eta\in (0,\ep)$, we have for all $(t,x)\in [1,\frac{\ep}{\eta}] \times N^\eta$:

\begin{equation}\label{eq_jad_reta}
 \jac R_t^\eta(x) \ge \frac{t^{m-1}}{C},
\end{equation}

\begin{equation}\label{eq_par}
 \left|\frac{\pa R^\eta}{\pa t}(t,x)\right| \le C\eta.
\end{equation}

\begin{proof}
 Since for all $s\in (0,1)$ and $\eta\in (0,\ep)$, the induced mapping $r_s^\eta: N^\eta\to N^{s\eta}$ is $Cs$-Lipschitz for some constant $C$ independent of $s$ and $\eta$, we have \begin{equation}\label{eq_jac_1}\jac r_s^\eta\le C^{m-1} s^{m-1}. \end{equation} Hence, for $t\in [1,\frac{\ep}{\eta}]$, we can write for $x\in N^\eta$: $$\jac R^\eta_t(x)=\jac (r_{\frac{1}{t}} ^{t\eta})^{-1}(x)=\frac{1}{\jac r_{\frac{1}{t}} ^{t\eta}\big(  (r_{\frac{1}{t}}^{t\eta})^{-1}(x)\big)}\overset{(\ref{eq_jac_1})}{\ge} \frac{t^{m-1}}{C^{m-1}},$$
which yields (\ref{eq_jad_reta}).
To show the second inequality, observe that, by definition of $R$, we have $\frac{\pa R}{\pa t}(t,x)=D_{tH^{-1}(x)} H(H^{-1}(x))$, where $H$ is provided by Theorem \ref{thm_local_conic_structure} (at $x_0=0$). As a  matter of fact, (\ref{eq_par}) immediately comes down from the facts that $H^{-1}$ preserves the distance to the origin and $H$ has bounded derivative.
\end{proof}
\end{subsection}

\begin{subsection}{Local estimates of the $L^p$ norm.}\label{sect_estimates_loc}
 In \cite{trace},  local estimates  were obtained in the case where $p$ is large by integrating along the trajectories of $r_s$ (see (\ref{theta_cont}) below). We are now going to make use of the isotopy $R_t$ that we just introduced in a similar way so as to get estimates that are valid for all $p$. 
\begin{lem}\label{lem_u_neta} For every  $p\in[1,\infty)$ there is a constant $C$  such that for all $u\in \cc^1(M)
$  with $\supp \,u\subset \bou(0,\ep)$ as well as $|\pa u| \in L^p(M)$, and all $\eta\in (0,\ep]$,  we have:\begin{enumerate}[(i)]   
\item  If $p<m$ then
$$ ||u||_{L^p(N^\eta)}\le  C\eta^{\frac{p-1}{p}}\left(\eta^{\frac{m-p}{4p}}||\pa u||_{L^p(M)}+||\pa u||_{L^p(M^{\eta^{1/2}})}\right)  .$$
\item  If $p=m$ then \begin{equation*}\label{eq_neta_pgd}
                                      ||u||_{L^p(N^\eta)}\le C\cdot \eta^{\frac{m-1}{m}} \cdot  \big(\ln \frac{1}{\eta}\big)^{\frac{m-1}{m}} \cdot ||\pa u||_{L^p(M)},
                                    \end{equation*}
                              \item If $p>m$ then \begin{equation*}\label{eq_neta_pgd}
                                      ||u||_{L^p(N^\eta)}\le C\cdot \eta^{\frac{m-1}{p}}\cdot ||\pa u||_{L^p(M)}.
                                    \end{equation*}
                           
                             \end{enumerate}
\end{lem}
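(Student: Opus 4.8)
The plan is to reconstruct $u$ on the sphere $N^\eta$ from its derivative by integrating along the radial trajectories $t\mapsto R^\eta(t,x)$ of the mapping $R$ introduced above, and then to convert the resulting one-dimensional estimate into an $L^p$ estimate by means of Minkowski's integral inequality, the change of variables $R^\eta_t:N^\eta\to N^{t\eta}$, and the co-area inequality (\ref{eq_coarea_sph}).

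Fix $x\in N^\eta$. Since $R^\eta(\ep/\eta,x)\in N^\ep$ and $\supp u\subset\bou(0,\ep)$, the function $t\mapsto u(R^\eta(t,x))$ vanishes at $t=\ep/\eta$ and equals $u(x)$ at $t=1$; for almost every $x$ it is continuous, subanalytic in $t$, and Lipschitz in $t$ with derivative bounded by $C\eta$ by (\ref{eq_par}), so integrating and using the chain rule gives
$$|u(x)|\le C\eta\int_1^{\ep/\eta}|\pa u(R^\eta(t,x))|\,dt.$$
Taking the $L^p(N^\eta)$ norm, Minkowski's integral inequality moves the norm inside the $t$-integral; for each fixed $t$ the change of variables $y=R^\eta_t(x)$ together with the Jacobian bound (\ref{eq_jad_reta}) yields $||\pa u\circ R^\eta_t||_{L^p(N^\eta)}\le C\,t^{-(m-1)/p}||\pa u||_{L^p(N^{t\eta})}$. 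Substituting $\zeta=t\eta$ we obtain the master estimate
$$||u||_{L^p(N^\eta)}\le C\,\eta^{\frac{m-1}{p}}\int_\eta^\ep\zeta^{-\frac{m-1}{p}}\,||\pa u||_{L^p(N^\zeta)}\,d\zeta.$$

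It then remains to estimate this integral, which is where the three regimes appear. Put $g(\zeta):=||\pa u||_{L^p(N^\zeta)}$; the co-area inequality (\ref{eq_coarea_sph}) gives $||g||_{L^p((0,\eta'))}\lesssim ||\pa u||_{L^p(M^{\eta'})}$ for every $\eta'\le\ep$, in particular $||g||_{L^p((0,\ep))}\lesssim ||\pa u||_{L^p(M)}$ since $\supp u\subset\bou(0,\ep)$. Hölder's inequality in $\zeta$ with exponents $p$ and $p'$ replaces the weight by $\zeta^{-(m-1)/(p-1)}$, whose integrability near $0$ is governed by the sign of $m-p$. When $p>m$ the integral $\int_0^\ep\zeta^{-(m-1)/(p-1)}\,d\zeta$ converges and (iii) follows at once; when $p=m$ the exponent equals $1$, $\int_\eta^\ep\zeta^{-1}\,d\zeta=\ln(\ep/\eta)\le\ln(1/\eta)$, and (ii) follows.

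The delicate case is $p<m$, where the weight is no longer integrable at $0$ and a single use of Hölder loses a full power of $\eta$. Here I would split $\int_\eta^\ep=\int_\eta^{\sqrt\eta}+\int_{\sqrt\eta}^\ep$ (for $\eta$ small enough that $\sqrt\eta\le\ep$; the range $\eta\in[\ep^2,\ep]$ is harmless, as all the quantities involved are then mutually comparable up to constants depending on $\ep$). On $[\eta,\sqrt\eta]$, Hölder together with $||g||_{L^p((0,\sqrt\eta))}\lesssim ||\pa u||_{L^p(M^{\sqrt\eta})}$ contributes precisely the local term $C\eta^{\frac{p-1}{p}}||\pa u||_{L^p(M^{\eta^{1/2}})}$, the powers of $\eta$ collapsing to $(p-1)/p$ because $\big(\tfrac{m-1}{p-1}-1\big)(p-1)=m-p$. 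On $[\sqrt\eta,\ep]$, the weight integral is at most $C\eta^{-\frac{m-p}{2p}}$ after taking the $1/p'$ power, so, combined with $||g||_{L^p((0,\ep))}\lesssim ||\pa u||_{L^p(M)}$ and the prefactor $\eta^{\frac{m-1}{p}}$, it produces a bound $C\eta^{\frac{m+p-2}{2p}}||\pa u||_{L^p(M)}$; as $m>p$ one has $\tfrac{m+p-2}{2p}\ge\tfrac{p-1}{p}+\tfrac{m-p}{4p}$, so for $\eta\le1$ this is dominated by $C\eta^{\frac{p-1}{p}}\eta^{\frac{m-p}{4p}}||\pa u||_{L^p(M)}$, and adding the two pieces yields (i). The main obstacle is exactly this bookkeeping in the case $p<m$: choosing the splitting radius (any $\eta^\theta$ with $\theta\le\tfrac34$ works, $\theta=\tfrac12$ being convenient) so that the local term carries the right exponent $(p-1)/p$ while the bulk term carries an exponent at least $(p-1)/p+\tfrac{m-p}{4p}$; everything else is a routine combination of Minkowski, change of variables, Hölder, and the co-area formula. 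The only further technical point to watch is the justification of the fundamental theorem of calculus along the merely Lipschitz-in-$t$, a.e. differentiable trajectories $t\mapsto R^\eta(t,x)$, which follows from a Fubini argument showing that for almost every $x\in N^\eta$ the trajectory avoids the non-differentiability locus of the conic retraction.
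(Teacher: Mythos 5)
Your argument is correct and uses the same ingredients as the paper's: the fundamental theorem of calculus along the radial trajectories of $R^\eta$, Minkowski's integral inequality, the Jacobian bound (\ref{eq_jad_reta}) to change variables from $N^\eta$ to $N^{t\eta}$, the co-area estimate (\ref{eq_coarea_sph}), H\"older in the radial variable, and a split of the radial integral at the scale $\sqrt\eta$ for case $(i)$. The organization, however, is genuinely different and arguably cleaner. The paper inserts a case-dependent weight $t^l$ (with $l=\frac{m+p}{2}-1$ for $(i)$ and $l=m-1$ for $(ii)$, $(iii)$) \emph{before} applying H\"older, raises to the power $p$, then estimates the double integral by the change of variables and splits the $t$-integral at $t=\eta^{-1/2}$. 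You instead do the change of variables first, collapsing the two-variable estimate to the one-dimensional ``master estimate''
$$||u||_{L^p(N^\eta)}\lesssim \eta^{\frac{m-1}{p}}\int_\eta^\ep\zeta^{-\frac{m-1}{p}}\,||\pa u||_{L^p(N^\zeta)}\,d\zeta,$$
and then treat all three cases by a uniform H\"older analysis of the weight $\zeta^{-(m-1)/(p-1)}$; the trichotomy in the statement becomes visibly the trichotomy integrable/log-divergent/non-integrable at $\zeta=0$, which the paper's ad hoc choice of $l$ obscures. For $(i)$ your bulk term carries the exponent $\eta^{\frac{p-1}{p}+\frac{m-p}{2p}}$, which is slightly sharper than the paper's $\eta^{\frac{p-1}{p}+\frac{m-p}{4p}}$; you correctly dominate it to match the statement. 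Two small points worth making explicit if this were to be written up: (a) for $p=1$ the H\"older step must be read as the degenerate $L^\infty$--$L^1$ pairing (the paper isolates this case: ``if $p=1$ simply notice $t^{-l/p}\le1$''), which here works because $\zeta^{-(m-1)}$ is monotone so its sup on $[\eta,\sqrt\eta]$ and $[\sqrt\eta,\ep]$ is attained at the left endpoint; (b) the constant blows up as $p\to m^-$, consistent with the paper's Remark \ref{rem_neta}(a).
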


\begin{proof}We first focus on $(i)$. By the fundamental theorem of calculus, we have
for all $p\in [1,\infty)$,  $u\in  \cc^1(M)
$  with $\supp \,u\subset \bou(0,\ep)$ and $|\pa u| \in L^p(M)$, as well as $\eta\in (0,\ep]$, and $x\in N^\eta$:
$$u(x)=-\int_1 ^{\ep/\eta} \frac{\pa (u\circ R^\eta)}{\pa t}(t,x)dt,$$
from which it follows that:
\begin{eqnarray}\label{eq_u_eta_prel}
||u||_{L^p(N^\eta)} &\le&\int_1 ^{\ep/\eta} \left(\int_{N^\eta}\left|\frac{\pa (u\circ R^\eta)}{\pa t}\right|^p(t,x)dx\right)^{1/p}dt \qquad\mbox{(by Minkowski's inequality)}\nonumber\\
 &=&\int_1 ^{\ep/\eta} t^{-\frac{l}{p}}\left(\int_{N^\eta}\left|\frac{\pa (u\circ R^\eta)}{\pa t}\right|^p(t,x)t^ldx\right)^{1/p}dt,\quad  \mbox{with $l:=\frac{m+p}{2}-1$},\nonumber\\
 &\le&\left(\int_1 ^{\ep/\eta} t^{-\frac{lp'}{p}}dt\right)^{1/p'}\left(\int_1 ^{\ep/\eta} \int_{N^\eta}\left|\frac{\pa (u\circ R^\eta)}{\pa t}\right|^p(t,x)t^{l}\,dx\,dt\right)^{1/p}, \end{eqnarray}
 by H\"older's inequality (for $p>1$; if $p=1$ then simply notice that $t^{-l/p}\le 1$ since $l\ge 0$). As, for $p\in (1,m)$, we have\begin{equation*}\label{eq_lp_sur_prime}\frac{lp'}{p}=\frac{\frac{m+p}{2}-1}{p-1}>1,\end{equation*}                                                                                                                                                        
  raising the preceding estimate to the power $p$, we get for $p\in [1,m)$:
 \begin{eqnarray}\label{eq_u_neta}
||u||_{L^p(N^\eta)}^p\nonumber
  &\lesssim&\int^{\ep/\eta} _{1}\int_{N^\eta} \left|\frac{\pa (u\circ R^\eta)}{\pa t}(t,x)\right|^p t^ldx\,dt\\\nonumber
   &\le& \int^{\ep/\eta} _{1}\int_{N^\eta}  |\pa u(R^\eta_t(x))|^p \left|\frac{\pa R^\eta}{\pa t} (t,x)\right|^p  t^ldx\,dt\\\nonumber
           &\overset{(\ref{eq_par})}{\lesssim}&\eta^{p}\int^{\ep/\eta} _{1}t^{l}\int_{N^\eta} |\pa u(R^\eta_t(x))|^p dx\,dt\\\nonumber
   &\overset{(\ref{eq_jad_reta})}{\lesssim}&\eta^{p}\int^{\ep/\eta} _{1}t^{l-m+1}\int_{N^\eta} |\pa u(R^\eta_t(x))|^p \, \jac R^\eta_t(x)\,dx\,dt\\
                &=&\eta^{p}\int^{\ep/\eta} _{1}t^{l-m+1}||\pa u||_{L^p(N^{t\eta})}^pdt.
 \end{eqnarray}
                                     Observe now that $l-m+1=\frac{p-m}{2}$, which is negative for $p$ in $[1,m)$, so that:
 \begin{equation}\label{eq_pierwsze}\int^{\eta^{-1/2}} _{1}t^{\frac{p-m}{2}}||\pa u||_{L^p(N^{t\eta})}^pdt \le  \int^{\eta^{-1/2}} _{1}||\pa u||_{L^p(N^{t\eta})}^pdt\overset{(\ref{eq_coarea_sph})}{\lesssim}\frac{1}{\eta} ||\pa u||_{L^p( M^{\eta^{1/2}})}^p\;.\end{equation} 
  Moreover, 
if $t\ge \eta^{-1/2}$ then $t^{\frac{p-m}{2}}\le  \eta^{\frac{m-p}{4}}$ (since $\frac{p-m}{2}<0$), which entails that for $\eta^{-\frac{1}{2}}<\frac{\ep}{\eta}$:  \begin{equation*}\label{eq_drugie}
\int^{\ep/\eta} _{\eta^{-1/2}}t^{\frac{p-m}{2}}||\pa u||_{L^p(N^{t\eta})}^pdt\le \eta^{\frac{m-p}{4}} \int^{\ep/\eta} _{\eta^{-1/2}}||\pa u||_{L^p(N^{t\eta})}^pdt \overset{(\ref{eq_coarea_sph})}{\lesssim} \eta^{\frac{m-p}{4}-1} ||\pa u||_{L^p(M)}^p.\end{equation*}
                                Together with (\ref{eq_u_neta}) and (\ref{eq_pierwsze}),  this yields $(i)$. In order to show now $(iii)$, observe first that for all $p>m$ we have:   
\begin{equation}\label{eq_tl}
\left( \int_1 ^{\ep/\eta} t^{-\frac{(m-1)p'}{p}}dt\right)^{1/p'}\lesssim \eta^{\frac{m}{p}-1},
\end{equation}
so that, by (\ref{eq_u_eta_prel}) for $l=m-1$, we get
\begin{eqnarray*}\label{eq_u_neta_2}
||u||_{L^p(N^\eta)}^p
 \lesssim \eta^{m-p}\int^{\ep/\eta} _{1}\int_{N^\eta} \left|\frac{\pa (u\circ R^\eta)}{\pa t}(t,x)\right|^p t^ldx\,dt.
\end{eqnarray*}
This double integral can be estimated by the computation carried out in (\ref{eq_u_neta}) (still for $l=m-1$).  We thus obtain: 
\begin{eqnarray*}\label{eq_u_neta_2}
 ||u||_{L^p(N^\eta)}^p\overset{(\ref{eq_u_neta})}{\lesssim} \eta^{m-p}\cdot \eta^p \int^{\ep/\eta} _{1}||\pa u||_{L^p(N^{t\eta})}^pdt \overset{(\ref{eq_coarea_sph})}{\lesssim} \eta^{m-1}||\pa u||_{L^p(M)}^p,
\end{eqnarray*}
as required. To show $(ii)$, just replace (\ref{eq_tl}) in the proof of $(iii)$ (assuming now $p=m$) with (for $m>1$):
$$\left( \int_1 ^{\ep/\eta} t^{-\frac{(m-1)p'}{p}}dt\right)^{1/p'}\lesssim  \; \big(\ln \frac{1}{\eta}\big)^{\frac{m-1}{m}}  , $$
and proceed in the same way (if $p=m=1$, simply notice that $t^{-(m-1)}\equiv 1$).
\end{proof}

\begin{rem}\label{rem_neta}\begin{enumerate}[(a)]
\item In $(i)$ and $(iii)$ of the lemma, the constant $C$  can be bounded independently of $p$ if this number stays bounded away from $m$.


\item Compiling together $(i)$, $(ii)$, and $(iii)$ of the above lemma, we can get that for any $p\in [1,\infty)$ we have for  $u\in \cc^1(M)
$  satisfying $\supp \,u\subset \bou(0,\ep)$ and $|\pa u|\in L^p(M)$, as well as $\eta\le \ep$:
\begin{equation}\label{eq_link_forallp}
                                        ||u||_{L^p(N^\eta)}\lesssim  \eta^{\frac{a-1}{p}}\cdot \big(\ln \frac{1}{\eta}\big)^{\frac{m-1}{m}}\cdot ||\pa u||_{L^p(M)},
\end{equation}
with $a:=\min(m,p)$. 
\end{enumerate}

\end{rem}

\medskip

  \begin{exa}
 The estimate provided by $(ii)$ of the latter lemma is sharp in the sense that we cannot lower the exponents. Let $M:=\{(x,y)\in \R^2:y>x>0\}$ and let $u(x,y):=\ln^\alpha \frac{1}{y}$. We have $\frac{\pa u}{\pa y}(x,y)=\frac{-\alpha}{y\ln^{1-\alpha}  \frac{1}{y}}$ so that $||\pa u||_{L^2(N^\eta)}\in L^2([0,\ep])  $ if and only if $\alpha<\frac{1}{2}$, while for $\eta>0$
 $$\eta^{\frac{1}{2}} \ln^{\alpha} \frac{1}{\eta} \lesssim ||u||_{L^2(N^\eta)}  .$$
 \end{exa}

\medskip

\begin{lem}\label{lem_umeta} For every $p\in [1,\infty)$, there is a constant $C$ such that for
    any distribution $u$ on $M$ satisfying $\supp \,u\subset \bou(0,\ep)$ and $|\pa u|\in L^p(M)$, we have for all $\eta\in (0,\ep]$:
    \begin{enumerate}
       \item     If $p\in [1,m)$ then
\begin{equation}\label{eq_m_eta_ppt}
||u||_{L^p(M^\eta)}\le C \, \eta \left(\eta^{\frac{m-p}{4p}}||\pa u||_{L^p(M)}+||\pa u||_{L^p(M^{\eta^{1/2}})}\right)  .
\end{equation}  
\item  If $p=m$ then  
\begin{equation}\label{eq_m_eta_pm}
                                       ||u||_{L^p(M^\eta)}\le C \, \eta \cdot  \big(\ln \frac{1}{\eta}\big)^{\frac{m-1}{m}}\cdot ||\pa u||_{L^p(M)},
\end{equation}
\item If $p>m$ then
\begin{equation}\label{eq_m_eta_pgd}
                                       ||u||_{L^p(M^\eta)}\le C \, \eta^{\frac{m}{p}}\cdot ||\pa u||_{L^p(M)}.
\end{equation}                                    \end{enumerate}
\end{lem}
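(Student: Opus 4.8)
The plan is to deduce Lemma \ref{lem_umeta} from Lemma \ref{lem_u_neta} by integrating the spherical estimates over the radius, exactly as (\ref{eq_coarea_sph}) permits. First I would use a density argument to reduce to $u\in \cc^1(M)$ with $\supp\,u\subset \bou(0,\ep)$: the right-hand sides of all three inequalities are continuous in the $W^{1,p}$ norm (locally), and both sides pass to the limit along a sequence of smooth functions converging to $u$ in the strong $W^{1,p}_{loc}$ topology, so it suffices to treat the smooth case. Then, by the co-area estimate (\ref{eq_coarea_sph}) applied on $M^\eta$, we have
$$||u||_{L^p(M^\eta)}^p \lesssim \int_0^\eta ||u||_{L^p(N^\zeta)}^p\,d\zeta,$$
and I would simply substitute the bound on $||u||_{L^p(N^\zeta)}$ furnished by Lemma \ref{lem_u_neta}, treating the three ranges of $p$ separately.

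For case $(iii)$, $p>m$: plugging in $||u||_{L^p(N^\zeta)}\lesssim \zeta^{\frac{m-1}{p}}||\pa u||_{L^p(M)}$ gives an integrand $\lesssim \zeta^{m-1}||\pa u||_{L^p(M)}^p$, whose integral over $(0,\eta)$ is $\lesssim \eta^{m}||\pa u||_{L^p(M)}^p$; taking $p$-th roots yields $||u||_{L^p(M^\eta)}\lesssim \eta^{m/p}||\pa u||_{L^p(M)}$, which is (\ref{eq_m_eta_pgd}). Case $(ii)$, $p=m$, is identical except the integrand carries an extra factor $(\ln\frac1\zeta)^{m-1}$; since $\zeta\mapsto \zeta^{m-1}(\ln\frac1\zeta)^{m-1}$ is (up to a constant) increasing near $0$, its integral over $(0,\eta)$ is $\lesssim \eta^{m}(\ln\frac1\eta)^{m-1}$, and the $m$-th root gives (\ref{eq_m_eta_pm}). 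These two cases are routine.

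Case $(i)$, $1\le p<m$, is the one requiring a little care, and I expect it to be the main (mild) obstacle, because the bound in Lemma \ref{lem_u_neta}(i) for $||u||_{L^p(N^\zeta)}$ itself contains the two competing terms $\zeta^{\frac{m-p}{4p}}||\pa u||_{L^p(M)}$ and $||\pa u||_{L^p(M^{\zeta^{1/2}})}$, and the second one is a genuinely $\zeta$-dependent quantity, not a constant. Raising to the $p$-th power and using $(a+b)^p\lesssim a^p+b^p$, we must bound
$$\int_0^\eta \zeta^{p-1}\Big(\zeta^{\frac{m-p}{4}}||\pa u||_{L^p(M)}^p + ||\pa u||_{L^p(M^{\zeta^{1/2}})}^p\Big)d\zeta.$$
The first summand integrates to $\lesssim \eta^{p+\frac{m-p}{4}}||\pa u||_{L^p(M)}^p = \eta^p\cdot\eta^{\frac{m-p}{4}}||\pa u||_{L^p(M)}^p$, giving the first term of (\ref{eq_m_eta_ppt}) after taking $p$-th roots. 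For the second summand I would use the crude bound $||\pa u||_{L^p(M^{\zeta^{1/2}})}^p \le ||\pa u||_{L^p(M^{\eta^{1/2}})}^p$ for $\zeta\le\eta$ (the integrand is monotone in $\zeta$), pulling it out of the integral; then $\int_0^\eta \zeta^{p-1}d\zeta = \frac{\eta^p}{p}$, which yields $\lesssim \eta^p ||\pa u||_{L^p(M^{\eta^{1/2}})}^p$ and hence the second term of (\ref{eq_m_eta_ppt}). (One should also note, as in Remark \ref{rem_neta}(a), that the implied constants stay bounded as long as $p$ is bounded away from $m$, so the statement "for every $p$" with a $p$-dependent $C$ is unproblematic; for the regime $\eta^{-1/2}\ge \ep/\eta$, i.e. $\eta$ not small, the estimate is trivial by enlarging $C$.) Assembling the three cases completes the proof.
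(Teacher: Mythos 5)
Your approach --- integrating the sphere-level estimates of Lemma~\ref{lem_u_neta} raised to the $p$-th power over the radius $\zeta\in(0,\eta)$ via (\ref{eq_coarea_sph}), case by case in $p$, and then reducing to smooth $u$ by regularization --- is essentially the same as the paper's, and the computations for all three cases are correct (including the monotonicity trick in case $(ii)$ and the crude bound $||\pa u||_{L^p(M^{\zeta^{1/2}})}\le||\pa u||_{L^p(M^{\eta^{1/2}})}$ in case $(i)$).

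The one point that deserves more care is your reduction to $\cc^1$ functions. You invoke a sequence of smooth functions converging to $u$ in the strong $W^{1,p}_{loc}$ topology; but as that topology is defined in section~2 it presupposes $u\in W^{1,p}(M)$ and requires $L^p$ convergence of $u_i$ on neighborhoods in $\mba$ of every point of $M\cup\pau M$. A priori $u$ is only $L^p_{loc}$ in $M$ (as a manifold), not near $\delta M$, and the content of the lemma is precisely that $u\in L^p(M^\eta)$; so asking for such a convergent sequence is circular, and the clause ``both sides pass to the limit'' silently assumes what is to be proved on the left-hand side. The paper's proof avoids this by taking regularizations $u_i\to u$ only in the $L^p_{loc}$ topology of $M$, with $\pa u_i\to\pa u$ in $L^p(M)$, and then applying Fatou's lemma (after passing to an a.e.-convergent subsequence) to get $||u||_{L^p(M^\eta)}\le\liminf||u_i||_{L^p(M^\eta)}$. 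With that replacement your proof matches the paper's.
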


\begin{proof}
 If $u$ is a $\cc^1$ function, it suffices to integrate with respect to $\eta$ the estimates of the preceding lemma raised to the power $p$, and to apply (\ref{eq_coarea_sph}).

 To prove the result in the case where $u$ may fail to be smooth,  note that such a distribution $u$ is an $L^p_{loc}$ function \cite[Section 1.1.2]{ma}. Using some regularization operators on manifolds \cite{derham}, we can construct a sequence $u_{i}\in \cc^\infty(M)$ such that $u_i$ tends to $u$ in the $L^p_{loc}$ topology and $\pa u_i$ tends to $\pa u$ in the $L^p$ norm (the $L^p$ convergence of the regularizations is proved in \cite{gold}).  The desired estimates then follow from Fatou's Lemma.
\end{proof}

 In the case where the function $u$ tends to zero at the origin and $p$ is large,  inequality (\ref{theta_cont}) below will provide an estimate which is better than (\ref{eq_m_eta_pgd}). The advantage of (\ref{eq_m_eta_pgd}) is on the other hand to be valid for all $p>m$.

\begin{rem}\label{rem_umeta}   As in Remark \ref{rem_neta}, compiling together these estimates,  we  get that for any $p\in [1,\infty)$, we have for distributions  $u$ on $M$
  satisfying $\supp \,u\subset \bou(0,\ep)$ and $|\pa u|\in L^p(M)$, and $\eta\le \ep$:
 \begin{equation}\label{eq_link_forallpm}
                                         ||u||_{L^p(M^\eta)}\lesssim  \eta^{\frac{a}{p}}\cdot \big(\ln \frac{1}{\eta}\big)^{\frac{m-1}{m}}\cdot ||\pa u||_{L^p(M)},
 \end{equation} 
 where, as in the latter remark,  $a:=\min(m,p)$.
\end{rem}

We end this section with an estimate devoted to the large values of $p$ ((\ref{eq_u_circ_r_sm}) below) that we
 shall need to establish the continuity of Morrey's embedding in section \ref{sect_morrey}.

\begin{lem}
There is a constant $C$ such that for all $p\in [1,\infty)$  sufficiently large and all $u\in W^{1,p}(M^\ep)$ satisfying $\tra_{\{0_{\R^n}\}}u=0$ we  have for almost every $\eta<\ep$:
\begin{equation}\label{theta_cont}|| u||_{L^p(N^\eta)}\lesssim \eta^{\frac{p-1}{p}}\,||u||_{W^{1,p}(M^\eta)}.\end{equation}
\end{lem}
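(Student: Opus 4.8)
The plan is to integrate along the trajectories of the retraction $r_s$, exactly as in the proof of Lemma \ref{lem_u_neta} but using $r$ instead of the inverse isotopy $R$, which is the natural thing to do once we know $\tra_{\{0_{\R^n}\}}u=0$. First I would reduce to $u\in\cc^\infty(\mba)$ with $\supp\,u\subset\bou(0,\ep)$: by Theorem \ref{thm_trace}(i) (normality is not needed for the local statement, or one passes to a normalization) such functions with vanishing value at $0_{\R^n}$ are dense in $\{u\in W^{1,p}(M^\ep):\tra_{\{0_{\R^n}\}}u=0\}$ for $p$ large, and both sides of \eqref{theta_cont} are continuous in the $W^{1,p}(M^\eta)$ norm — here $p>m$ so the trace on $N^\eta$ is controlled, cf. Corollary \ref{cor_trace_bounded}. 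For such a smooth $u$, since $u(0_{\R^n})=0$ and $r_s(x)\to 0_{\R^n}$ as $s\to 0$ while $r_1(x)=x$, the fundamental theorem of calculus gives, for $x\in N^\eta$,
\[
u(x)=\int_0^1 \frac{\pa (u\circ r^\eta)}{\pa s}(s,x)\,ds=\int_0^1 \pa u(r_s(x))\cdot\frac{\pa r}{\pa s}(s,x)\,ds.
\]

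Next I would estimate the right-hand side in $L^p(N^\eta)$. By Minkowski's integral inequality,
\[
||u||_{L^p(N^\eta)}\le \int_0^1\Big(\int_{N^\eta}|\pa u(r_s(x))|^p\,\big|\tfrac{\pa r}{\pa s}(s,x)\big|^p\,dx\Big)^{1/p}ds,
\]
and then I would use \eqref{eq_der_r_s}, namely $|\frac{\pa r}{\pa s}(s,x)|\lesssim |x|=\eta$ on $N^\eta$, to pull out a factor $\eta$; and the Jacobian bound \eqref{eq_jacr_s}, $\jac\,r_s(x)\ge s^\nu/\kappa$, to write $\int_{N^\eta}|\pa u(r_s(x))|^p\,dx\lesssim s^{-\nu}\int_{N^\eta}|\pa u(r_s(x))|^p\,\jac\,r_s(x)\,dx = s^{-\nu}||\pa u||_{L^p(N^{s\eta})}^p$, using that $r_s$ maps $N^\eta$ onto $N^{s\eta}$ (more precisely, the restricted spherical Jacobian version of \eqref{eq_jacr_s}, which is what is actually used for \eqref{ln}). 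This yields
\[
||u||_{L^p(N^\eta)}\lesssim \eta\int_0^1 s^{-\nu/p}\,||\pa u||_{L^p(N^{s\eta})}\,ds.
\]
Now apply Hölder's inequality in $s$ on $(0,1)$: $\int_0^1 s^{-\nu/p}||\pa u||_{L^p(N^{s\eta})}\,ds\le \big(\int_0^1 s^{-\nu p'/p}\,ds\big)^{1/p'}\big(\int_0^1 ||\pa u||_{L^p(N^{s\eta})}^p\,ds\big)^{1/p}$. The first factor is finite and bounded (by a constant depending only on $\nu$) precisely when $\nu p'/p<1$, i.e. when $p>\nu+1$ — this is where ``$p$ sufficiently large'' enters, and it is harmless. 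For the second factor, substitute $\zeta=s\eta$ to get $\big(\eta^{-1}\int_0^\eta ||\pa u||_{L^p(N^\zeta)}^p\,d\zeta\big)^{1/p}\lesssim \eta^{-1/p}||\pa u||_{L^p(M^\eta)}$ by the co-area comparison \eqref{eq_coarea_sph}. Combining, $||u||_{L^p(N^\eta)}\lesssim \eta\cdot\eta^{-1/p}||\pa u||_{L^p(M^\eta)}=\eta^{\frac{p-1}{p}}||\pa u||_{L^p(M^\eta)}\le \eta^{\frac{p-1}{p}}||u||_{W^{1,p}(M^\eta)}$, which is \eqref{theta_cont}.

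The step I expect to be the main obstacle is the density/approximation reduction, i.e. justifying that it suffices to prove \eqref{theta_cont} for smooth $u$ with $u(0_{\R^n})=0$ and then pass to the limit. One has to (a) know that $\tra_{\{0_{\R^n}\}}$ is well-defined and continuous on $W^{1,p}(M^\ep)$ for $p$ large — this is Corollary \ref{cor_trace_bounded} with $A=\{0_{\R^n}\}$, $k=0$, and here $l$ is the number of local components of $M$ at $0_{\R^n}$ — and (b) produce smooth approximations $u_i\to u$ in $W^{1,p}(M^\eta)$ (after localizing via a cut-off supported in $\bou(0,\ep)$) with $\tra_{\{0_{\R^n}\}}u_i=0$, which one gets from Theorem \ref{thm_trace}(iii) applied to the stratification $\{\{0_{\R^n}\}\}$ (or its non-normal analogue), so that $u\in\ker\tra_{\{0_{\R^n}\}}$ is approximated by elements of $\cc^\infty_{\mba\setminus\{0_{\R^n}\}}(\mba)$, which near $0_{\R^n}$ vanish identically and in particular have value $0$ at the origin. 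Once this is in place, the inequality is stable under the limit (Fatou on the left in $L^p(N^\eta)$ for a.e.\ $\eta$, norm convergence on the right), and the smooth-case computation above goes through verbatim. A minor technical point to handle is the restriction $\eta^{-1/2}<\ep/\eta$ type constraints — but here, unlike in Lemma \ref{lem_u_neta}, the $s$-integral runs over the fixed interval $(0,1)$, so no such issue arises, and the estimate holds for a.e.\ $\eta<\ep$ directly.
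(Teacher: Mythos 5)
Your argument is correct, and it is in substance the same argument as the one hiding behind the paper's proof: the paper simply cites formula $(2.8)$ and Lemma $2.4$ of \cite{trace}, where the operator $\Theta^M$ is precisely $\int_0^1 \frac{\pa}{\pa s}\bigl(u\circ r_s\bigr)\,ds$, so your self-contained derivation (FTC along $r_s$ using the vanishing trace at the origin, the bound \eqref{eq_der_r_s}, the spherical Jacobian estimate underlying \eqref{ln}, H\"older in $s$ which is where the requirement $p>\nu+1$ enters, and \eqref{eq_coarea_sph}) reconstructs exactly that computation. Your handling of the density reduction via Theorem \ref{thm_trace}$(iii)$ (or a normalization) and the a.e.-$\eta$ passage to the limit via \eqref{eq_coarea_sph} and Fatou is also sound, so nothing is missing.
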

\begin{proof}This inequality follows from $(2.8)$ of \cite{trace}, where it is stated with the operator $\Theta^M$ introduced there. If $\tra_{\{0_{\R^n}\}}u=0$ then $u=\Theta^M u$ almost everywhere, by Lemma $2.4$ of the latter article. \end{proof}

Inequalities (\ref{theta_cont}) and  (\ref{eq_coarea_sph}) yield that for all $p$ large enough,   $u\in W^{1,p}(\mep)$ satisfying  $\tra_{\{0_{\R^n}\}}u=0$, and  $\eta\in (0,\ep)$, we have:
\begin{equation}\label{theta_contm}|| u||_{L^p(M^\eta)}\lesssim\eta ||u||_{W^{1,p}(M^\eta)}.\end{equation}
This estimate enables us to establish:

\begin{lem}
 There is a constant $C$ such that for all $p$ sufficiently large and all $u\in W^{1,p} (\mep)$ satisfying $\tra_{\{0_{\R^n}\}} u=0$, we have for all $\eta\le \ep$ and all $s\in (0,1)$:
  \begin{equation}\label{eq_u_circ_r_sm}|| u\circ r_s||_{W^{1,p} (M^\eta)}\le C s^{1-\frac{\nu}{p}}||u||_{W^{1,p}(M^{s\eta})} ,\end{equation}  where $\nu$ is given by (\ref{eq_jacr_s}).
\end{lem}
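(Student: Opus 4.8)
The plan is to estimate the $L^p$ norm of $u\circ r_s$ and of its gradient separately, and then combine. For the $L^p$ norm itself, we already have the tools: inequality (\ref{lnm}) gives
$$\|u\circ r_s\|_{L^p(M^\eta)}\le C s^{-\nu/p}\|u\|_{L^p(M^{s\eta})},$$
but this is not good enough — we need an extra power of $s$. The improvement comes from combining (\ref{lnm}) with (\ref{theta_contm}): applying (\ref{lnm}) and then (\ref{theta_contm}) at radius $s\eta$,
$$\|u\circ r_s\|_{L^p(M^\eta)}\le Cs^{-\nu/p}\|u\|_{L^p(M^{s\eta})}\lesssim s^{-\nu/p}\cdot s\eta\cdot\|u\|_{W^{1,p}(M^{s\eta})}.$$
Hmm — but that introduces a spurious factor $\eta$. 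Actually the cleaner route is: first apply (\ref{theta_contm}) to $u$ at radius $\eta$ to bound $\|u\circ r_s\|_{L^p(M^\eta)}$ via a change of variables, or better, note $u\circ r_s$ again has trace zero at the origin (since $r_s$ fixes $0$ and contracts toward it), so (\ref{theta_contm}) applies directly to $u\circ r_s$:
$$\|u\circ r_s\|_{L^p(M^\eta)}\lesssim \eta\,\|u\circ r_s\|_{W^{1,p}(M^\eta)}.$$
That is circular for our purposes. So the honest approach: use (\ref{lnm}) then (\ref{theta_contm}), accepting we must be careful with the $\eta$ powers — but the statement (\ref{eq_u_circ_r_sm}) has no $\eta$ dependence in the constant, so whatever $\eta$ factors appear on the right via (\ref{theta_contm}) must be matched by $\eta$ factors we gain on the left, again via (\ref{theta_contm}) applied to $u\circ r_s$ itself. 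Concretely: $\|u\circ r_s\|_{W^{1,p}(M^\eta)}$ we want to bound; its $L^p$ part is $\lesssim \eta\|u\circ r_s\|_{W^{1,p}(M^\eta)}$ which for small $\eta$ is absorbable, so it suffices to bound the gradient term $\|\pa(u\circ r_s)\|_{L^p(M^\eta)}$.

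So the crux is the gradient estimate. By the chain rule, $\pa(u\circ r_s)(x) = (\pa u)(r_s(x))\cdot D_x r_s$, and since $r_s$ is $Cs$-Lipschitz (Theorem \ref{thm_local_conic_structure}(\ref{item_H_bi})), $|D_x r_s|\le Cs$, giving the pointwise bound $|\pa(u\circ r_s)(x)|\le Cs\,|(\pa u)(r_s(x))|$. Then
$$\|\pa(u\circ r_s)\|_{L^p(M^\eta)}^p \le C^p s^p\int_{M^\eta}|(\pa u)(r_s(x))|^p\,dx.$$
Now change variables $y=r_s(x)$; the image is $M^{s\eta}$ (since $r_s$ preserves distance to the origin up to the factor $s$), and the Jacobian satisfies $\jac\,r_s(x)\ge s^\nu/\kappa$ by (\ref{eq_jacr_s}), so $dx \le \kappa s^{-\nu}dy$. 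Therefore
$$\|\pa(u\circ r_s)\|_{L^p(M^\eta)}^p \le C^p\kappa\, s^{p-\nu}\int_{M^{s\eta}}|\pa u(y)|^p\,dy = C^p\kappa\,s^{p-\nu}\|\pa u\|_{L^p(M^{s\eta})}^p,$$
i.e. $\|\pa(u\circ r_s)\|_{L^p(M^\eta)}\le C' s^{1-\nu/p}\|\pa u\|_{L^p(M^{s\eta})}\le C's^{1-\nu/p}\|u\|_{W^{1,p}(M^{s\eta})}$. For the $L^p$ part, I apply (\ref{theta_contm}) — valid since $\tra_{\{0_{\R^n}\}}(u\circ r_s)=0$ (the trace at the origin is unchanged under the retraction, which one checks using that $r_s$ converges to the identity as $s\to 1$ and fixes the frontier point $0$, or directly from $u=\Theta^M u$ a.e. being preserved) — to get $\|u\circ r_s\|_{L^p(M^\eta)}\lesssim \eta\|u\circ r_s\|_{W^{1,p}(M^\eta)}$; since $\eta\le\ep<\tfrac12$ we may absorb this term into the left side of the inequality $\|u\circ r_s\|_{W^{1,p}(M^\eta)} = \|u\circ r_s\|_{L^p(M^\eta)} + \|\pa(u\circ r_s)\|_{L^p(M^\eta)}$, yielding $\|u\circ r_s\|_{W^{1,p}(M^\eta)}\le 2\|\pa(u\circ r_s)\|_{L^p(M^\eta)}$ for $\ep$ small enough, and we are done.

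The main obstacle, and the only point requiring genuine care, is justifying that $\tra_{\{0_{\R^n}\}}(u\circ r_s)=0$ so that (\ref{theta_contm}) may be applied to $u\circ r_s$. This is where the characterization $u = \Theta^M u$ a.e. (Lemma $2.4$ of \cite{trace}, invoked in the proof of (\ref{theta_cont})) is used: the operator $\Theta^M$ is defined precisely via averaging along the trajectories $s\mapsto r_s(x)$, and one checks that $\Theta^M(u\circ r_s) = (\Theta^M u)\circ r_s$, so if $u = \Theta^M u$ then $u\circ r_s = \Theta^M(u\circ r_s)$, whence its trace at the origin vanishes. A secondary technical point is that the change of variables $y = r_s(x)$ is legitimate despite $r_s$ being merely bi-Lipschitz (not smooth): it is subanalytic hence smooth almost everywhere, $\jac\,r_s$ is defined a.e., and the area formula applies. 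With these two points settled, the chain-rule plus Jacobian-bound computation above is routine.
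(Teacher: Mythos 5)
Your gradient estimate is exactly the paper's argument: chain rule, the $Cs$-Lipschitz bound on $r_s$, then the Jacobian lower bound (\ref{eq_jacr_s}) via a change of variables, giving $\|\pa(u\circ r_s)\|_{L^p(M^\eta)}\lesssim s^{1-\nu/p}\|\pa u\|_{L^p(M^{s\eta})}$. For the $L^p$ part, however, you go astray at the very first attempt and then talk yourself into an unnecessary detour. The chain of inequalities $\|u\circ r_s\|_{L^p(M^\eta)}\lesssim s^{-\nu/p}\|u\|_{L^p(M^{s\eta})}\lesssim s^{1-\nu/p}\,\eta\,\|u\|_{W^{1,p}(M^{s\eta})}$ (first (\ref{lnm}), then (\ref{theta_contm}) at radius $s\eta$ applied to $u$, whose trace at the origin vanishes by hypothesis) is precisely the paper's argument, and the factor $\eta$ is \emph{not} spurious: since $\eta\le\ep<\tfrac12$, it is bounded above by a universal constant and can be swept into $C$. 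Adding this estimate to the gradient estimate gives (\ref{eq_u_circ_r_sm}) immediately. So the route you rejected is correct, elementary, and exactly matches the paper.

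The alternative you pursue instead — applying (\ref{theta_contm}) to $u\circ r_s$ and then absorbing the $L^p$ term into the left-hand side — would in principle also work, but it is both harder and less robust. First, it requires the claim $\tra_{\{0_{\R^n}\}}(u\circ r_s)=0$, which is not established anywhere in this paper; your justification via $\Theta^M(u\circ r_s)=(\Theta^M u)\circ r_s$ leans on the definition of $\Theta^M$ from \cite{trace}, which is not reproduced here, so this step is not verifiable from the text and constitutes a genuine loose end. (It is plausible, using $r_{s'}\circ r_s=r_{s's}$, but you would need to check it against the actual construction of $\Theta^M$.) Second, the absorption step $\|u\circ r_s\|_{W^{1,p}(M^\eta)}\le 2\|\pa(u\circ r_s)\|_{L^p(M^\eta)}$ requires $C\eta\le\tfrac12$ where $C$ is the implicit constant in (\ref{theta_contm}); this forces an additional smallness constraint $\ep\le\tfrac{1}{2C}$ that has not been arranged and is not needed by the paper's argument. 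Neither of these is fatal, but both are avoidable: once you notice that the $\eta$ in the right-hand side is harmless, the direct route closes the proof in two lines.
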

\begin{proof}
 For  $\eta\in [ 0,\ep]$ and almost every $s\in (0,1)$, we have for all  $u\in W^{1,p} (\mep)$:
  \begin{eqnarray}\label{eq_pa_u_rond_r}
\nonumber||\pa  (u\circ r_s)||_{L^p(M^\eta)}^p&=&\int_{M^\eta} \left|^{\bf t}D_x r_{s}\left(\pa  u(r_s(x))\right)\right|^p dx\lesssim\int_{M^\eta} \left|\pa  u(r_s(x))\right|^p s^p dx
\\
&\overset{(\ref{eq_jacr_s})}{\lesssim}&\int_{M^\eta} \left|\pa  u(r_s(x))\right|^p\cdot \jac r_{s}^\eta(x)\cdot s^{p-\nu} dx\lesssim s^{p-\nu} ||\pa  u||_{L^p(M^{s\eta})} ^p.
\end{eqnarray}
Moreover, whenever $u$ satisfies $\tra_{\{0_{\R^n}\}} u=0$, we have for $p$ large:
 \begin{equation*}|| u\circ r_s||_{L^p (M^\eta)}\overset{(\ref{lnm})}{\lesssim}
  s^{-\frac{\nu}{p}}||u||_{L^p(M^{s\eta}) }  \overset{(\ref{theta_contm})}{\lesssim}s^{1-\frac{\nu}{p}}\cdot \eta\cdot ||u||_{W^{1,p}(M^{s\eta})} .\end{equation*}
 Together with (\ref{eq_pa_u_rond_r}), this yields  (\ref{eq_u_circ_r_sm}). 
\end{proof}

 \end{subsection}
 \end{section}

\begin{section}{Density of smooth functions and duality between Sobolev spaces}\label{sect_density_and_duality}
We focus in this section on the ``small'' values of $p$ and partially generalize the results of \cite{trace}. We then derive duality results between $W^{1,p}(\omd, \pa \omd)$ and $W^{-1,p'}(\omd)$.

\begin{subsection}{Density of smooth functions.}\label{sect_density} We adopt the convention $\dim \emptyset=-\infty$.

\begin{thm}\label{thm_dense_lprime}Let $Z$ be a definable open subset of $\pau M$ and let $A$ be a definable subset of $ \delta M$ containing $\delta M\setminus \pau M$ and $\delta Z$, with   $\dim A\le m-2$. If $M$ is connected along $\pau M\setminus (Z \cup \adh A )$  then for all $p\in [1,m- \dim A]$ not infinite,    $\cc^\infty_{\mba\setminus (Z\cup \adh{A})}(\mba)$ is dense in $W^{1,p}(M,Z)$.
\end{thm}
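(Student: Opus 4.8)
The plan is to reduce the statement to a purely local problem at points of the frontier and then invoke the local estimates of Section 4 together with a cut-off construction. First I would set up the global machinery: by Theorem \ref{pro_existence_stratifications}, fix a locally bi-Lipschitz trivial stratification $\Sigma$ of $\mba$ compatible with $M$, $\pau M$, $A$, $\adh A$, $Z$, and $\delta Z$. Given $u\in W^{1,p}(M,Z)$, the goal is to approximate $u$ in $W^{1,p}(M)$ by functions in $\cc^\infty_{\mba\setminus(Z\cup\adh A)}(\mba)$. The first reduction is standard: since $Z\cup\adh A$ is closed in $\mba\setminus(\delta M\setminus\pau M)$ and $M\cup(\pau M\setminus(Z\cup\adh A))$ is a Lipschitz manifold with boundary (using that $M$ is connected along $\pau M\setminus(Z\cup\adh A)$), the classical density theory on Lipschitz manifolds with boundary lets me approximate $u$ by functions vanishing near $Z$ and smooth up to the ``good'' part of the boundary; the only genuine difficulty is to push the support away from $\adh A$ (a set of dimension $\le m-2$) and from the remaining bad locus $\delta M\setminus\pau M$ (also dimension $\le m-2$ by \eqref{eq_pa1}), while keeping the $W^{1,p}$ error small.

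The heart of the argument is therefore a local statement: if $x_0\in A$ (or $x_0\in\delta M\setminus\pau M$), then near $x_0$ one can multiply $u$ by a cut-off $\chi_\eta$ that is $0$ on $M^\eta=\bou(x_0,\eta)\cap M$, $1$ outside $M^{2\eta}$ (say), with $|\pa\chi_\eta|\lesssim 1/\eta$, and show that $\|u-\chi_\eta u\|_{W^{1,p}(M)}\to 0$ as $\eta\to 0$. The $L^p$ part of the error is controlled by $\|u\|_{L^p(M^{2\eta})}\to 0$ by dominated convergence, and the term $\|(\pa\chi_\eta)u\|_{L^p}$ is the critical one: it is bounded by $\eta^{-1}\|u\|_{L^p(M^{2\eta})}$. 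Here is where Lemma \ref{lem_umeta}, applied in the local conic setting of Section \ref{sect_lcs_mba} at $x_0$, comes in: for $p\in[1,m)$, \eqref{eq_m_eta_ppt} gives $\|u\|_{L^p(M^\eta)}\lesssim\eta\big(\eta^{(m-p)/4p}\|\pa u\|_{L^p(M)}+\|\pa u\|_{L^p(M^{\eta^{1/2}})}\big)$, so $\eta^{-1}\|u\|_{L^p(M^{2\eta})}\lesssim\eta^{(m-p)/4p}\|\pa u\|_{L^p(M)}+\|\pa u\|_{L^p(M^{2\eta^{1/2}})}\to 0$ provided $p<m$. This is exactly the role of the hypothesis, but note it must be sharpened: to kill an $A$ of dimension exactly $m-2$ we need $p\le m-\dim A$, i.e.\ roughly $p\le 2$ when $\dim A=m-2$. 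To get the full range $p\in[1,m-\dim A]$ I would not work at a single point but stratify $A$ and, for $S\in\Sigma$ a stratum of $A$ of dimension $d$, use the local bi-Lipschitz triviality to write a neighborhood as (cone link) $\times\,S$, applying Lemma \ref{lem_umeta} fiberwise in the $(m-d)$-dimensional transverse slice and integrating the resulting estimates over the $d$-dimensional parameter $S$ via Fubini; the transverse estimate requires $p<m-d$, and the borderline $p=m-d$ can be reached using the logarithmic estimate \eqref{eq_m_eta_pm} and the fact that $\eta\log(1/\eta)\to 0$, exactly as in Lemma \ref{lem_umeta}(ii).

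The step I expect to be the main obstacle is organizing this cut-off across the whole lower-dimensional locus $\adh A$ (together with $\delta M\setminus\pau M$ and $\delta Z$) in a way that is uniform and compatible with the stratification: one must cut off simultaneously near strata of different dimensions, and a cut-off chosen for a higher-dimensional stratum will have support meeting lower-dimensional ones, so the construction has to be done inductively on dimension of strata (starting from $0$-dimensional strata and going up), at each stage using the already-constructed cut-offs and the fact that the local conic structure (Theorem \ref{thm_local_conic_structure}) and its consequences \eqref{eq_coarea_sph}, \eqref{lnm}, Lemma \ref{lem_umeta} hold uniformly on a neighborhood of each compact piece of a stratum. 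Concretely, I would: (1) cover $\adh A$ by finitely many ``tubular'' neighborhoods $U_S$ of (relatively compact pieces of) strata $S$, ordered by increasing $\dim S$; (2) on each $U_S$ build $\chi_\eta^S$ as above using the local trivialization, with the transverse-slice estimate from Lemma \ref{lem_umeta}; (3) form $u_\eta:=\big(\prod_S\chi_\eta^S\big)u$, check via the product rule and the estimates that $\|u-u_\eta\|_{W^{1,p}(M)}\to 0$; (4) observe $u_\eta$ vanishes near $Z\cup\adh A$, hence lies in $W^{1,p}$ with trace zero on all of $\pau M$ near that set, and $M\cup(\pau M\setminus(Z\cup\adh A))$ being a Lipschitz manifold with boundary, apply Theorem \ref{thm_trace}(iii) / Corollary \ref{cor_densite_nonnormal}-type density (or directly the classical theory) to approximate $u_\eta$ by elements of $\cc^\infty_{\mba\setminus(Z\cup\adh A)}(\mba)$; (5) a diagonal argument finishes the proof. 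One technical point worth flagging: since $u$ need not be smooth, in steps (2)--(3) one first regularizes (as in the proof of Lemma \ref{lem_umeta}, via \cite{derham,gold}) so that all manipulations with $\pa u$ are legitimate, then passes to the limit.
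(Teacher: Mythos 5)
Your plan follows the same architecture as the paper's proof: fix a locally bi-Lipschitz trivial stratification, reduce to a transverse fiber $F^\ep$ via the trivialization $\Lambda$ and Fubini, cut off near the lowest-dimensional stratum that $\supp_\mba u$ touches, and control the critical term $\|u\,\pa\chi_\eta\|_{L^p}\lesssim\eta^{-1}\|u\|_{L^p(V^\eta)}$ with the local estimates of Lemma \ref{lem_umeta}. The paper's induction, phrased through the quantity $\kappa_u$ (the dimension of the lowest-dimensional stratum of $\adh A$ that the support meets), runs in essentially the order you describe (handle low strata first, then higher ones), so the organizing idea and the key lemma are the right ones.

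There is, however, a genuine gap in your treatment of the borderline case $p=m-\dim A$. You propose to close it with the logarithmic estimate (\ref{eq_m_eta_pm}) ``and the fact that $\eta\log(1/\eta)\to 0$, exactly as in Lemma \ref{lem_umeta}(ii).'' But the quantity that must vanish is $\eta^{-1}\|u\|_{L^p(V^\eta)}$, not $\|u\|_{L^p(V^\eta)}$: a standard cut-off $\chi_\eta$ has $|\pa\chi_\eta|\lesssim\eta^{-1}$ on its support. Substituting (\ref{eq_m_eta_pm}) (applied fiberwise in the transverse slice of dimension $m-k$) gives $\eta^{-1}\|u\|_{L^p(V^\eta)}\lesssim\big(\ln\tfrac{1}{\eta}\big)^{\frac{m-k-1}{m-k}}\|\pa u\|_{L^p}$, which diverges rather than tends to zero. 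This is exactly where the paper must do something nontrivial: it replaces $\psi_\eta$ by a specially designed radial cut-off $\psi'_\eta$ (built from the function $g(\zeta)=\ln\ln\ln\tfrac{1}{\zeta}$, see (\ref{eq_papsi_cone})) whose derivative decays like $\frac{1}{|x|\,\ln\frac{1}{|x|}\,\ln\left(\ln\frac{1}{|x|}\right)}$, and then combines it with the sphere-level estimate (\ref{eq_gy}) (from Lemma \ref{lem_u_neta}, not Lemma \ref{lem_umeta}) and the co-area inequality (\ref{eq_coarea_sph}). The resulting error is $O\big(\ln^{1-p}(\ln\tfrac{1}{\eta})\big)$, which tends to $0$ precisely because $p>1$. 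Without this construction, the endpoint $p=m-\dim A$ is not attained, and since the most important conclusion (Corollary \ref{cor_densite_A_vide} with $p=2$ when $\dim(\delta M\setminus\pau M)=m-2$) lives exactly at this endpoint, the gap is not cosmetic. A secondary point: in your Step (4), you should appeal only to the classical density theory for Lipschitz manifolds with boundary; Theorem \ref{thm_trace}(iii) and Corollary \ref{cor_densite_nonnormal} are proved in \cite{trace} only for $p$ sufficiently large and are not available in the range $p\le\pms$.
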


\begin{proof} If  $A$ is empty then so is $\delta M\setminus \pau M$ and $Z$ is closed (and open in $\delta M$ by assumption), which means that $\mba$ is a Lipschitz manifold with boundary $\pa M$ at each point of this set,  and a finite union of manifolds with boundary at each point of $Z$, in which cases it is well-known that we can find such approximations (as we can use a partition of unity, it is enough to prove that we can approximate the germ of a function $u\in W^{1,p}(M,Z)$ at a point of $\delta M$). 

We thus assume $A$ to be nonempty and fix  a locally bi-Lipschitz trivial stratification $\Sigma$ of $\adh{M}$ compatible with $\delta M, Z,\adh{A}$, and $ \pau M$. 
 Given $k\in \{0,\dots,c\}$, where $c:=\dim A$, let $A_k$ denote the union of all the strata $S\subset \adh{A}$ of $\Sigma$ satisfying $\dim S\le k$, and set $A_{-1}:=\emptyset$.
  For $u\in W^{1,p}( M)$, let us    define \begin{equation}\label{eq_kappa_u}\kappa_u: =\max\left\{k\in\{0,\dots, c+1\}\,:A_{k-1}\cap \supp_\mba u=\emptyset\right\}.\end{equation}
 We will prove that we can approximate such a function $u$ by decreasing induction on $\kappa_u\in \{0,\dots,c+1 \}$. 
 
  Assume first $\kappa_u=c+1$, take $u\in W^{1,p}(M,Z)$,  and notice that in this case,    $\supp_\mba u$ is a compact subset of  $\mba\setminus A_{c}  $. It suffices to show that every point $\xo$ of $\adh{\supp u}\cap \delta M\subset \mba\setminus \adh A$ has a neighborhood on which $u$ admits approximations. If $\xo\in Z$, then $\mba$ is, in the vicinity of $\xo$, a finite union of Lipschitz manifolds with common boundary $Z$, in which case, since $u$ has zero trace on $Z$, the result is well-known \cite[end of Theorem III.2.46]{boyer}. If $\xo \in \delta M\setminus Z$, then as $\xo\notin \adh A$, which contains $\delta M\setminus \pau M$ and $\delta Z$, we must have $\xo \in \pau M\setminus (\adh Z\cup \adh A)$, and our assumptions entail that $M$ is connected at $\xo$, which means that $\mba$ is a Lipschitz manifold with boundary at $\xo$, and that the result is again well-known.

 To perform the induction step, we fix an integer $k\le c$, assume that the claimed statements hold for all $v\in  W^{1,p}( M)$ satisfying $\kappa_v>k$, and take a function $u\in  W^{1,p}( M)$ with $\kappa_u\ge k$. 
 
 Every point $x_0\in \overline{M}$ admits a neighborhood $U_{x_0}$ in $\overline{M}$ on which we have a bi-Lipschitz trivialization of $\Sigma$.  As we can use a partition of unity, we may assume that $\supp_\mba u$ fits in $U_{x_0}$, for some $x_0\in \adh{M}$.

Let $S$ denote the stratum of $\Sigma$ that contains the point $x_0$.   Observe that if $\dim S< \kappa_u$ then the function $u$ is zero near $S$, so that in such a situation, choosing $U_{x_0}$ smaller if necessary, we can require the function $u$ to be zero on $U_{x_0}$. We thus will suppose $\kappa_u\le \dim S$.

Moreover, as $\supp_\mba u \subset \uxo$ which does not meet any stratum of dimension less than $\dim S$, we see that if $\dim S$ is bigger than $k$ then $\kappa_u>k$, and the result follows from the induction hypothesis.  We thus can assume $\dim S\le k$.

We conclude from the reductions of the two above paragraphs that we can assume $\dim S \le k\le \kappa_u \le \dim S$, which means that it is no loss of generality to only deal with the case $\dim S=\kappa_u =k$. We will do the proof for $x_0=0_{\R^n}\in \delta M$. 
 
   
  By definition of bi-Lipschitz triviality, there is a bi-Lipschitz homeomorphism $\Lambda: U_0 \to (\pi^{-1}(0_{\R^n})\cap \adh{M})\times W_0$ (here $U_0=U_{0_{\R^n}}$), where $\pi:U_0 \to S$ is a $\cc^\infty$ definable (see Remark \ref{rem_pi_S} (\ref{item_retractions})) retraction and $W_0$ is a neighborhood of the origin in $S$ (here $k$ may be $0$, in which case $\Lambda$ is the identity).
  
     Let \begin{equation}\label{eq_FV}F^\eta:=\pi^{-1}(0_{\R^n}) \cap M^\eta \et V^\eta:=F^\eta\times W_0 .\end{equation}
 We may assume the set $F^\ep$, $\ep>0$ small, to be a $\cc^\infty$ manifold of dimension $(m-k)\ge (m-c)$ (see Remark \ref{rem_pi_S} (\ref{item_whitney})). We postpone the case $p=m-c$, assuming for the moment $p<m-c$.
 
 Thanks to (\ref{eq_m_eta_ppt}) for  the manifold $F^\ep$, we thus see that for $v\in W^{1,p}(F^\ep)$ with $\supp\, v\subset \bou(0,\ep)$, $\eta\le \ep$, 
 we have (as $p\in [1,m-c)\subset [1,m-k)$):
  $$||v||_{L^p(F^\eta)} \lesssim \eta\left(\eta^{\frac{m-k-p}{4p}}||v||_{W^{1,p}(F^\ep)}+||v||_{W^{1,p}(F^{\eta^{1/2}})}\right). $$
 For simplicity, we will identify $U_{0}\cap M$ with its image under $\Lambda$ (which is a bi-Lipschitz homeomorphism) and work as if $u$ were a function on this set.
 Given $y\in W_0$ and $x\in F^\ep$, set $v^y(x):= u(x,y)$. Applying the just above inequality to $v^y$, raising it to the power $p$,  and integrating with respect to $y\in W_0$, we easily derive that  for $\eta\le \ep$ and $p\in [1,m-c)$ (assuming $u\in W^{1,p}(V^\ep)$ and $\adh{\supp\, u}\subset \bou(0,\ep)\times W_0$):
    \begin{equation}\label{eq_u_pr_cutoff}||u||_{L^p(V^\eta)} \lesssim \eta\left(\eta^{\frac{m-k-p}{4p}}||u||_{W^{1,p}(V^\ep)}+||u||_{W^{1,p}(V^{\eta^{1/2}})}\right). \end{equation}
       Let now $\psi:\R \to [0,1]$  be a $\cc^\infty$ function  
such that $\psi\equiv 0$ on $(-\infty,\frac{1}{2})$ and $\psi\equiv 1$ near $[1,+\infty)$, and set for $\eta$ positive and $(x,y)\in V^\ep$, \begin{equation}\label{eq_psieta}\psi_\eta(x,y):=\psi\left(\frac{|x|^2}{\eta^2}\right).\end{equation}

We claim that $v_\eta:=\psi_\eta u$ tends to $u$ in $W^{1,p}(M)$ as $\eta\to 0$. Clearly, $||v_\eta- u||_{L^p(M)}$ and $||(\psi_\eta-1) \pa u||_{L^p(M)}$ both tend to zero as $\eta$ tends to zero.  Consequently, if we write:
\begin{equation}\label{eq_udpsi_minus} ||\pa (\psi_\eta u)-\pa u||_{L^p(M)} \le   ||u\, \pa \psi_\eta ||_{L^p(M)}+ ||(\psi_\eta- 1)\,\pa u||_{L^p(M)},\end{equation}                                                                                                                                             
we see that it suffices to check that $||u\, \pa \psi_\eta ||_{L^p(M)}$ tends to zero. But since $\sup |\pa \psi_\eta|\lesssim \frac{1}{\eta}$ and $\supp \, |\pa \psi_\eta| \subset V^{\eta}$, we have:
\begin{equation}\label{eq_udpsi} ||u\, \pa \psi_\eta ||_{L^p(M)}\lesssim \; \frac{||u ||_{L^p(V^\eta)}}{\eta},\end{equation}
which, due to (\ref{eq_u_pr_cutoff}), must tend to zero as $\eta\to 0$, which yields our claim.
As $\kappa_{v_\eta}>k$, we now see that the result (in the case $p<m-c$) follows from our induction hypothesis (since $\tra_{\pau M} v_\eta=\psi_\eta\cdot \tra_{\pau M} u$).

It therefore only remains  to carry out the induction step in the case $p=m-c$ (assuming $A\ne \emptyset$, since otherwise $p=+\infty$ which is excluded in the statement of the theorem). We will rely on the same argument, just replacing the  above  function $\psi_\eta$ with an adequate function $\psi'_\eta$, and the proof that we are going to perform actually works for each $p\in (1,m-c]$.

 We thus first construct, for every $\eta>0 $ small, a function $\psi'_\eta  \in W^{1,\infty}(\R^n)$ such that 
\begin{enumerate}\item $\psi_\eta'\equiv 0$ near the origin  and   $\psi_\eta'\equiv 1$ on $\R^n \setminus \bou(0,\eta)$. \item For almost all $\zeta\in (0,\eta]$  \begin{equation}\label{eq_der_cutoff}
||\pa \psi'_\eta||_{L^\infty(\sph (0,\zeta))}\le \frac{1}{\zeta\cdot \ln \frac{1}{\zeta}\cdot \ln\left(\ln \frac{1}{\zeta}\right) }         .                                                                                                                                                                  \end{equation}\end{enumerate}      
 
 Let for this purpose, given $\zeta \in (0,\frac{1}{e})$, $$g(\zeta):=\ln\left(\ln\left(\ln \frac{1}{\zeta}\right)\right).$$ As $g$ is a smooth function that tends to infinity at $0$, for every $\eta\in(0,\frac{1}{e})$ there is $a_\eta<\eta$ such that $g(a_\eta)=1+g(\eta)$. Let then
$$\phi_\eta(\zeta)=
\begin{cases}1\quad &\mbox{ if }\; \zeta\ge \eta\\
1+g(\eta)-g(\zeta) \quad &\mbox{ if } \; a_\eta \le  \zeta\le  \eta\\
0 \quad&\mbox{ if }\; \zeta\le a_\eta \;,
\end{cases}$$
and finally set $\psi'_\eta(x):=\phi_\eta(|x|)$. 

The function $\psi'_\eta$ is continuous, piecewise smooth, and constant on every $\sph(0,\zeta)$, $\zeta>0$. It is not difficult to check that $\psi_\eta'\in W^{1,\infty} (\R^n)$ and that  for $a_\eta<|x|<\eta$ we have
\begin{equation}\label{eq_papsi_cone}\pa \psi'_\eta(x)=\phi'_\eta(|x|) \cdot \frac{x}{|x|}=-g'(|x|)\cdot\frac{x}{|x|}=\frac{1}{|x| \cdot \ln \frac{1}{|x|} \cdot \ln \left(\ln \frac{1}{|x|}\right) }\cdot  \frac{x}{|x|},\end{equation}
which clearly yields (\ref{eq_der_cutoff}).

We are now ready to perform the proof in the case $p=m-c$. Let $$G_y^\eta:=\left(\pi^{-1}(0)\cap \sph(0,\eta)\right) \times\{y\} \et F_y^\eta:=F^\eta \times\{y\}.$$
Thanks to $(i)$ and $(ii)$ of Lemma \ref{lem_u_neta} for  the manifold $F^\ep$, we see that for almost every $y\in W_0$ (as $\supp\, u\subset V^\ep$), 
 we have (as $p=m-c\le m-k$) for almost every $\zeta\le \ep$:
  \begin{equation}\label{eq_gy}||u||_{L^p(G_y^\zeta)}^p \lesssim \zeta^{p-1}\cdot \ln ^{p-1} \frac{1}{\zeta} \cdot  ||u||_{W^{1,p}(F_y^\ep)}^p. \end{equation}
 We claim that
the function $v_\eta':=\psi'_\eta u$ tends to $u$ in $W^{1,p}(F^\ep \times W_0)$ as $\eta\to 0$, where we regard the above function $\psi'_\eta: \R^n \to \R$ as a function on $F^\ep\times W_0$, constant with respect to the second variable. As in the case $p<m-c$, it suffices to check that $||u\, \pa \psi_\eta' ||_{L^p(M)}$ tends to zero.  
For almost every $y\in W_0$, by (\ref{eq_der_cutoff}) and (\ref{eq_gy}), we have  for $\eta>0$ small
 \begin{equation}\label{eq_u_pa_psi_eta_N}||u\pa \psi'_\eta||_{L^p(G_y^\zeta)}^p \lesssim \frac{\zeta^{p-1} \cdot  \ln^{p-1} \frac{1}{\zeta}}{\zeta^p  \cdot \ln^p \frac{1}{\zeta} \cdot \ln^p \left( \ln \frac{1}{\zeta}\right) } \cdot  ||u||_{W^{1,p}(F_y^\ep)}^p= \frac{1}{\zeta  \cdot \ln \frac{1}{\zeta} \cdot \ln^p \left( \ln \frac{1}{\zeta}\right) } \cdot  ||u||_{W^{1,p}(F_y^\ep)}^p\end{equation}
 so that, since $\supp\, |\pa \psi'_\eta|  \subset F^\eta\times W_0$, we then can write
 \begin{eqnarray*}||u\pa \psi'_\eta||_{L^p(F_y^\ep)}^p &\overset{(\ref{eq_coarea_sph})} \lesssim & \int_0 ^\eta||u\pa \psi'_\eta||_{L^p(G_y^\zeta)}^pd\zeta
 \\ & \overset{(\ref{eq_u_pa_psi_eta_N})}\lesssim& \int_0 ^\eta \frac{  d\zeta}{\zeta \cdot \ln \frac{1}{\zeta}\cdot \ln^p \left( \ln \frac{1}{\zeta}\right)  } \cdot ||u||_{W^{1,p}(F_y^\ep)}^p
 \\ &\lesssim& \ln^{1-p}\left(\ln \frac{1}{\eta}\right)\cdot  ||u||_{W^{1,p}(F_y^\ep)}^p \; .\end{eqnarray*}
 As the constant of this estimate is independent of $y$, we get after integrating with respect to $y\in W_0$ for $\eta>0$ small
  $$||u\pa \psi'_\eta||_{L^p(M)}^p \lesssim \ln^{1-p}\left(\ln \frac{1}{\eta}\right)\cdot  ||u||_{W^{1,p}(F^\ep\times W_0)}^p,$$
 which tends to zero as $\eta$ tends to zero (as $p>1$),  yielding our claim.

Consequently, as $\kappa_{v'_\eta}>k$, we can see that the result follows from our induction hypothesis (since $\tra_{\pau M} v'_\eta=\psi'_\eta\cdot \tra_{\pau M} u$).
\end{proof}

Let now $$\pms:=m-\dim\, (\delta M\setminus  \pau M) \overset{(\ref{eq_pa1})}{\ge}2.$$
with, as above, the convention $\dim \emptyset=-\infty$.

 Observe now that, thanks to (\ref{eq_pa1}), Theorem \ref{thm_dense_lprime} gives in the case $A=\delta M\setminus  \pau M$ and $Z=\pau M$ or $\emptyset$:
\begin{cor}\label{cor_densite_A_vide}   For all $p\in [1,\pms]$  (not infinite), the following assertions are true:               
\begin{enumerate}[(i)]                                                                                                                                                                  
\item\label{item_avide_ker}  $\cc_0^\infty(M)$ is dense in $W^{1,p}(M,\pa M) $, which amounts to  $$W^{1,p}(M,\pa M)=W^{1,p}_0(M).$$ 
 \item \label{item_avide_normal}   If $M$ is connected along $\pa M$ then $\cc^\infty_{M\cup \pau M}(\mba)$ is dense in $W^{1,p}(M)$.
 \end{enumerate}   
\end{cor}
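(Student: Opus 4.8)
The plan is to deduce both assertions directly from Theorem \ref{thm_dense_lprime}, specialized with $A:=\delta M\setminus\pau M$ in each case. By (\ref{eq_pa1}) this is a definable subset of $\delta M$ with $\dim A\le m-2$, it trivially contains $\delta M\setminus\pau M$, and $m-\dim A=m-\dim(\delta M\setminus\pau M)=\pms$, so the interval $[1,m-\dim A]$ furnished by the theorem is precisely $[1,\pms]$. It then only remains, for each of the two choices of the definable open set $Z\subset\pau M$ below, to check the remaining hypotheses of the theorem — that $A\supset\delta Z$ and that $M$ is connected along $\pau M\setminus(Z\cup\adh A)$ — and to identify the resulting approximating subspace $\cc^\infty_{\mba\setminus(Z\cup\adh A)}(\mba)$.

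For $(i)$ I would take $Z:=\pau M$, which is trivially a definable open subset of itself. As $\delta M$ is closed, $\adh A\subset\delta M$ and $\delta Z=\adh{\pau M}\setminus\pau M\subset\delta M\setminus\pau M=A$; moreover $\pau M\setminus(Z\cup\adh A)=\emptyset$, so the connectedness requirement is vacuous (in particular no normality of $M$ is needed here). Since $Z\cup\adh A$ contains $\pau M\cup(\delta M\setminus\pau M)=\delta M$ and is itself contained in $\delta M$, it equals $\delta M$; hence $\mba\setminus(Z\cup\adh A)=M$ and $\cc^\infty_{\mba\setminus(Z\cup\adh A)}(\mba)=\cc^\infty_M(\mba)$. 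Theorem \ref{thm_dense_lprime} thus gives density of $\cc^\infty_M(\mba)$ in $W^{1,p}(M,\pa M)$ for every finite $p\in[1,\pms]$. Since $\cc^\infty_M(\mba)\subset\cc_0^\infty(M)\subset W^{1,p}(M,\pa M)$ — the last inclusion because any element of $\cc_0^\infty(M)$ vanishes near $\delta M$, hence has zero trace on $\pau M$ — the space $\cc_0^\infty(M)$ is a fortiori dense in $W^{1,p}(M,\pa M)$. Finally, $W^{1,p}(M,\pa M)$ is closed in $W^{1,p}(M)$ — norm convergence forces convergence in the strong $W^{1,p}_{loc}$ topology, in which $\tra_{\pau M}$ is continuous (Section \ref{sect_traceloc}) — so the density just obtained means that $W^{1,p}(M,\pa M)$ is the $W^{1,p}$-closure of $\cc_0^\infty(M)$, i.e. $W^{1,p}(M,\pa M)=W^{1,p}_0(M)$; this is $(i)$.

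For $(ii)$ I would instead take $Z:=\emptyset$, so that $W^{1,p}(M,Z)=W^{1,p}(M)$ and $\delta Z=\emptyset\subset A$. The connectedness condition then reads: $M$ is connected along $\pau M\setminus\adh A$, and this is exactly where the weak normality of $M$ is used — by Definition \ref{dfn_embedded} it means that $M\cup\pau M$ is a Lipschitz manifold with boundary $\pa M$, so $M$ is connected at every point of $\pau M$, in particular along $\pau M\setminus\adh A$. Theorem \ref{thm_dense_lprime} then provides density of $\cc^\infty_{\mba\setminus\adh A}(\mba)$ in $W^{1,p}(M)$ for every finite $p\in[1,\pms]$; since $\adh A\subset\delta M$ we have $\mba\setminus\adh A=M\cup(\delta M\setminus\adh A)\subset M\cup\pau M$, whence $\cc^\infty_{\mba\setminus\adh A}(\mba)\subset\cc^\infty_{M\cup\pau M}(\mba)$, so the latter space is dense in $W^{1,p}(M)$ as well; this is $(ii)$.

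Both parts being just an unwinding of Theorem \ref{thm_dense_lprime}, there is no real obstacle; the only points that call for a bit of care are the elementary bookkeeping of the frontier sets — that $Z\cup\adh A=\delta M$ in $(i)$ and that $\mba\setminus\adh A$ remains inside $M\cup\pau M$ in $(ii)$ — and, in $(ii)$, the observation that weak normality is precisely what secures the connectedness hypothesis of the theorem along $\pau M\setminus\adh A$.
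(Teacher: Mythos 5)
Your proof is correct and follows the paper's own approach exactly: the paper derives both assertions from Theorem \ref{thm_dense_lprime} with $A=\delta M\setminus \pau M$ and $Z=\pau M$ for $(i)$, $Z=\emptyset$ for $(ii)$, which is precisely the specialization you make. The careful verification of the hypotheses ($\delta Z\subset A$, the connectedness condition, and the identifications $Z\cup\adh A=\delta M$ and $\mba\setminus\adh A\subset M\cup\pau M$) fills in bookkeeping that the paper leaves implicit, and your closedness argument for $W^{1,p}(M,\pa M)$ correctly supplies the equivalence with $W^{1,p}_0(M)$.
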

\begin{rem}
 It is easy to see that the conclusion of (\ref{item_avide_normal}) fails as soon as $M$ is not weakly normal (see the proof of Corollary $3.10$ of \cite{trace}). 
\end{rem}
We now are going to give consequences of these results in the case where the underlying manifold is an open set $\Omega$.
 \begin{rem}\label{rem_omh} We recall that $\omt=\adh\omd\setminus \adh{\pa \omd}$ (see Remark \ref{rem_fenced}). Since $\dim \omt\setminus \omd<n-1$,  this set is ``removable'', i.e.  $ W^{1,p}(\omd)=W^{1,p}(\omt)$ for all $p\in [1,\infty)$ (see for instance \cite[section 1.1.18]{ma}). As the inclusion map $ W^{1,p}(\omd)\hookrightarrow W^{1,p}(\omt)$  identifies the respective kernels of the trace operators, we deduce that:
 $$ W^{1,p}(\omd,\pau \omd)= W^{1,p}(\omt,\pau \omt).$$
 A proof of these facts (for all $p\in [1,\infty)$)  can also be carried out by applying  Theorem \ref{thm_dense_lprime} (with $p=1$) to a $\cc^\infty$ normalization of $\omd$.
 Note also that Corollary \ref{cor_densite_A_vide} and (\ref{eq_fenced}) then entail that we have for all $p$ in $[1,\pom]$ or sufficiently large  (not infinite):
$$ W^{1,p}(\omd,\pau \omd)= W_0^{1,p}(\omt).$$
\end{rem}
This remark, and Corollary \ref{cor_densite_A_vide} $(\ref{item_avide_ker})$, yield:
\begin{pro}\label{pro_extension}
\begin{enumerate}
 \item\label{item_ext_by_0} For each $p\in [1,\pom]$ or sufficiently large (not infinite), if $u\in W^{1,p}(\omd,\pa \omd)$ then $\adh{u}$, defined as $\adh{u}(x)=u(x)$ if $x\in \omd$ and $\adh{u}(x)=0$ otherwise, belongs to $W^{1,p}(\R^n)$.  
 \item\label{item_ext_dist} For any $p\in [1,\pom]$ (not infinite), every distribution $T:\cc_0^\infty(\omd)\to \R$ which is continuous in the norm 
  $||\cdot||_{W^{1,p}(\omd)}$ uniquely extends continuously (in this norm) to $\cc_0^\infty(\omt)$.
\end{enumerate}
\end{pro}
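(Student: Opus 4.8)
The plan is to deduce both statements from the density result in Corollary \ref{cor_densite_A_vide} $(\ref{item_avide_ker})$ together with Remark \ref{rem_omh}. For $(\ref{item_ext_by_0})$: since $u\in W^{1,p}(\omd,\pa\omd)$ and $p\in[1,\pom]$ (or $p$ large), the corollary gives $u\in W^{1,p}_0(\omt)$, so we may pick $\varphi_i\in\cc_0^\infty(\omt)$ with $\varphi_i\to u$ in $W^{1,p}(\omt)$. Extending each $\varphi_i$ by $0$ outside $\omt$ produces $\adh{\varphi_i}\in\cc_0^\infty(\R^n)$, and because $\varphi_i\to u$ in $W^{1,p}(\omt)=W^{1,p}(\omd)$ while $\adh{\varphi_i}\equiv 0$ on $\R^n\setminus\adh\omt$, the sequence $\adh{\varphi_i}$ is Cauchy in $W^{1,p}(\R^n)$; its limit is a $W^{1,p}(\R^n)$ function whose restriction to $\omd$ is $u$ and which vanishes a.e.\ on the complement of $\adh{\omt}$ (a set of full measure in $\R^n\setminus\omd$, since $\dim(\omt\setminus\omd)<n-1$ forces $\hn^n(\adh\omt\setminus\omd)=0$). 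Hence that limit is a.e.\ equal to $\adh u$, which shows $\adh u\in W^{1,p}(\R^n)$. One should also record that $\pa\adh u$ is the extension by $0$ of $\pa u$, which follows by the same approximation and the fact that $\pa\adh{\varphi_i}$ is the zero-extension of $\pa\varphi_i$.

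For $(\ref{item_ext_dist})$: let $T:\cc_0^\infty(\omd)\to\R$ be linear and continuous for $||\cdot||_{W^{1,p}(\omd)}$. By Remark \ref{rem_omh}, $\cc_0^\infty(\omd)$ is dense in $W^{1,p}_0(\omt)=W^{1,p}(\omd,\pa\omd)$; but we want density in the bigger space $\cc_0^\infty(\omt)$ equipped with the $W^{1,p}(\omd)$-norm. The point is that every $\psi\in\cc_0^\infty(\omt)$ belongs to $W^{1,p}(\omt)=W^{1,p}(\omd)$ and, having compact support inside the open set $\omt$, in fact lies in $W^{1,p}_0(\omt)$ — equivalently in $W^{1,p}(\omd,\pa\omd)$, since $\psi$ is smooth and supported away from $\delta\omt=\adh{\pa\omt}$, so its trace on $\pau\omt$ vanishes. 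Thus, for $p\in[1,\pom]$, Corollary \ref{cor_densite_A_vide} applied to the manifold $\omt$ gives that $\cc_0^\infty(\omt)$ is itself contained in the $W^{1,p}$-closure of $\cc_0^\infty(\omt)$, and what we really need is that $\cc_0^\infty(\omd)$ is dense in $\cc_0^\infty(\omt)$ for the $W^{1,p}(\omd)$-norm: given $\psi\in\cc_0^\infty(\omt)\subset W^{1,p}(\omd,\pa\omd)=W^{1,p}_0(\omd)$ (here using that $\omd$ is removable from $\omt$, i.e.\ $W^{1,p}_0(\omt)$ is the closure of $\cc_0^\infty(\omt)$, and that $\cc_0^\infty(\omd)$ is dense in $W^{1,p}_0(\omd)$ by definition), we can approximate $\psi$ by elements of $\cc_0^\infty(\omd)$ in $W^{1,p}(\omd)$-norm. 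Uniform continuity of $T$ then lets us extend it, and the extension is unique because $\cc_0^\infty(\omd)$ is dense in $\cc_0^\infty(\omt)$ for this norm; a standard Cauchy-sequence argument gives a well-defined continuous linear functional on $\cc_0^\infty(\omt)$ agreeing with $T$ on $\cc_0^\infty(\omd)$.

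The main obstacle is organizing the density statement in $(\ref{item_ext_dist})$ correctly: one must be careful that the relevant density is of $\cc_0^\infty(\omd)$ inside $\cc_0^\infty(\omt)$ for the $W^{1,p}(\omd)$-norm (not merely inside the full Sobolev space), and to verify that an arbitrary $\psi\in\cc_0^\infty(\omt)$ does lie in $W^{1,p}_0(\omt)=W^{1,p}(\omd,\pa\omd)$, which is where the restriction $p\le\pom$ (via Corollary \ref{cor_densite_A_vide} and the fenced-ness of $\omt$, Remark \ref{rem_fenced}) is genuinely used. Once this density is in hand, both extension claims are routine functional analysis. A secondary technical point in $(\ref{item_ext_by_0})$ is the identification of the distributional gradient of $\adh u$ with the zero-extension of $\pa u$ across $\delta\omd$; this is handled by testing against $\cc_0^\infty(\R^n)$ and passing to the limit along the approximating sequence $\adh{\varphi_i}$, using that $\hn^{n-1}(\delta\omt)$-mass contributes nothing because the $\varphi_i$ are compactly supported in $\omt$.
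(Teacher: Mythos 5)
Your proof is correct and follows exactly the route the paper intends: the proposition is presented there as a direct consequence of Remark \ref{rem_omh} (which gives $W^{1,p}(\omd,\pa\omd)=W^{1,p}_0(\omt)$ for the stated range of $p$) combined with the classical extension-by-zero and density/Cauchy-sequence arguments that you carry out. The only minor imprecision is at the start of $(\ref{item_ext_by_0})$, where the identification $u\in W^{1,p}_0(\omt)$ is really delivered by Remark \ref{rem_omh} (which routes Corollary \ref{cor_densite_A_vide} and $(\ref{eq_fenced})$ through the removability $W^{1,p}(\omd)=W^{1,p}(\omt)$) rather than by Corollary \ref{cor_densite_A_vide} alone.
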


 \begin{exa}\label{exa_disk_tilde}
  Proposition \ref{pro_extension} $(\ref{item_ext_dist})$ is not true for $p$ large. For instance, if $\Omega$ coincides with the manifold $M$ introduced in Example \ref{exa_disk} then the Dirac measure concentrated at the origin is bounded for $||\cdot||_{W^{1,p}(\omd)}$ for $p$ large,  is zero on $\cc_0^\infty(\omd)$, but  nonzero on $\cc_0^\infty(\omt)$.
 \end{exa}

\begin{pro}\label{pro_carac_wpa}
For all $p$ in $[1,\pom]$ or sufficiently large  (not infinite), we have  for each function $u\in W^{1,p}(\omd,\pau \omd)$ and   $v\in W^{1,p'}(\Omega)$, and all $i\in \{1,\dots,n\}$:
\begin{equation}\label{eq_lsp}  <u,\pa_i v>=-<\pa_i u,v> .\end{equation}
\end{pro}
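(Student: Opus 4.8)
The plan is to prove the identity \eqref{eq_lsp} by a density argument, reducing it to the case where $u$ is smooth and compactly supported in $\omt$ (equivalently, in $\omd$ up to a removable set), where the identity is nothing but the definition of the distributional partial derivative of $v$. First I would recall from Remark \ref{rem_omh} that $W^{1,p}(\omd,\pau\omd)=W^{1,p}(\omt,\pau\omt)$, and that for the relevant range of $p$ (namely $p\in[1,\pom]$, by Corollary \ref{cor_densite_A_vide}\,\eqref{item_avide_ker} applied to $\omt$, or $p$ large, by \eqref{eq_fenced}) we have $W^{1,p}(\omt,\pau\omt)=W^{1,p}_0(\omt)$. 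Hence, given $u\in W^{1,p}(\omd,\pau\omd)$, we may pick a sequence $\varphi_k\in\cc_0^\infty(\omt)$ with $\varphi_k\to u$ in $W^{1,p}(\omt)$, and since $\dim(\omt\setminus\omd)<n-1$ is removable, also $\varphi_k\to u$ in $W^{1,p}(\omd)$; note each $\varphi_k$ may be regarded as an element of $\cc_0^\infty(\omd)$ after discarding the negligible removable set, or more carefully, its restriction to $\omd$ is smooth with support a compact subset of $\omt$.

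Next I would establish the identity for each $\varphi_k$ in place of $u$: for $v\in W^{1,p'}(\omd)=W^{1,p'}(\omt)$ we have $<\varphi_k,\pa_i v>=-<\pa_i\varphi_k,v>$, which is exactly the definition of the weak derivative $\pa_i v$ tested against the test function $\varphi_k\in\cc_0^\infty(\omt)$ (the pairing $<\cdot,\cdot>$ being the integral over $\omd$, equivalently over $\omt$). Then I would pass to the limit $k\to\infty$ in both sides: $<\varphi_k,\pa_i v>\to<u,\pa_i v>$ because $\varphi_k\to u$ in $L^p(\omd)$ and $\pa_i v\in L^{p'}(\omd)$, by H\"older's inequality \eqref{eq_holder}; and $<\pa_i\varphi_k,v>\to<\pa_i u,v>$ because $\pa_i\varphi_k\to\pa_i u$ in $L^p(\omd)$ and $v\in L^{p'}(\omd)$, again by H\"older. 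This yields \eqref{eq_lsp} for $u$.

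There is essentially no serious obstacle here; the statement is a soft consequence of the density theorem. The one point requiring a little care is the interplay between $\omd$ and $\omt$: one must make sure that the test functions $\varphi_k$, produced as approximations in $W^{1,p}_0(\omt)$, genuinely give admissible test objects for the integration-by-parts identity over $\omd$, and that both $v$ and $\pa_i v$ integrate against them—this is handled by the removability of $\omt\setminus\omd$ (so $W^{1,p'}(\omd)=W^{1,p'}(\omt)$, and integrals over $\omd$ and $\omt$ agree) together with the fact that $\supp\varphi_k$ is a compact subset of $\omt$. I would also remark, as is implicit in the statement's phrasing ``for all $p$ in $[1,\pom]$ or sufficiently large'', that the argument is identical in the two ranges, the only difference being which result ($\eqref{item_avide_ker}$ of Corollary \ref{cor_densite_A_vide}, or \eqref{eq_fenced}) furnishes the equality $W^{1,p}(\omt,\pau\omt)=W^{1,p}_0(\omt)$ used to get the approximating sequence.
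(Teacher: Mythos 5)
Your proposal is correct and follows essentially the same route as the paper: reduce to smooth compactly supported $u$ via the density result (Corollary \ref{cor_densite_A_vide}\,\eqref{item_avide_ker} for $p\le\pom$, \eqref{eq_fenced} via Remark \ref{rem_omh} for $p$ large) and then conclude by integration by parts. The only cosmetic differences are that the paper also approximates $v$ by $\cc^\infty(\omd)$ functions before integrating by parts classically, whereas you more efficiently note that once $u$ is replaced by $\varphi_k\in\cc_0^\infty(\omt)$ the identity is exactly the definition of the distributional derivative of $v$, so no approximation of $v$ is needed; and that you route both cases through $\omt$, while the paper passes to $\omt$ only in the large-$p$ case. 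Your handling of the $\omd$ versus $\omt$ issue (supports compact in $\omt$ but not necessarily in $\omd$, removability of $\omt\setminus\omd$) is accurate.
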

\begin{proof}
In the case where $p\in [1,\pom]$, by Corollary \ref{cor_densite_A_vide}  $(\ref{item_avide_ker})$, we see that it indeed suffices to prove this identity for $u\in \cc^\infty_0(\Omega)$  and $v\in \cc^\infty(\Omega)$, for which the result follows after integration by parts. 

In the case where $p$ is large, by Remark \ref{rem_omh}, we can assume that $\Omega=\omt$, or equivalently (see Remark \ref{rem_fenced}), that $\Omega$ is  fenced, so that thanks to (\ref{eq_fenced}), we can also suppose that $u\in \cc^\infty_0(\Omega)$  and $v\in \cc^\infty(\Omega)$, and the result can again be established by integrating by parts. 
 \end{proof}
\begin{rem}
 It is possible to prove that this proposition actually also holds in the case $p=\infty$. 
\end{rem}

\end{subsection}
 
 \begin{subsection}{Duality in Sobolev spaces.}\label{sect_dual} 
As usual, we let for $p\in [1,\infty]$:\begin{equation*}
                      W^{-1,p}(\Omega):=\{T\in \cc_0^\infty(\omd)':\exists\, f_0,\dots, f_n \in L^p(\Omega) , \;\; T=\sum_{i=0}^n\pa_i f_i  \},
                     \end{equation*}
 that we endow with the norms:
$$||T||_{W^{-1,p}(\Omega)}:=\inf  \left(\sum_{i=0}^n ||f_i||_{L^p(\Omega)}^p\right)^{1/p},   \quad \mbox{for each } p\in [1,\infty) ,$$
and  $$||T||_{W^{-1,\infty}(\Omega)}:=\inf  \left(\max_{i\in \{0,\dots,n\}} ||f_i||_{L^\infty(\Omega)}\right),  \quad \mbox{for } p=\infty ,  $$
where the infimum is taken  among all the $f_0,\dots,f_n$ in $L^p(\Omega) $ satisfying $T= \sum_{i=0}^n \pa_i f_i$. 

 Observe that, due to  Proposition \ref{pro_extension} (\ref{item_ext_dist}), 
 we have  for all $p\in [1,\pom]$ (not infinite):
\begin{equation}\label{eq_wm1}
 W^{-1,p'}(\omt)= W^{-1,p'}(\omd).
\end{equation}
These two spaces can however be different for $p$ large (see Example \ref{exa_disk_tilde}). 
\begin{rem}\label{rem_repres_functionals}
  For every $p \in (1,\infty)$, the linear mapping  $\mathbf{A}_p: W^{1,p}(\omd)\to L^{p}(\omd)^{n+1}$ defined as $\mathbf A _p (u):= (u, \pa u)$,  is continuous, injective, and has closed image. Its dual mapping  $\mathbf{A}_p':  L^{p'}(\omd)^{n+1}\to W^{1,p}(\omd)'$ is therefore onto. As a matter of fact, every  continuous linear functional $T:W^{1,p}(\omd)\to \R$ can be written $\alpha+\pa \beta$ for some  $\alpha\in L^{p'}(\omd)$ and $\beta\in  L^{p'}(\omd)^n$.
\end{rem}

\medskip

\subsection*{\bf The morphisms $\Lambda_p$ and $\Lambda'_p$.} As well-known, since the elements of  $W^{-1,p'}(\omd)$ are  by definition  continuous functionals on the space $(\cc_0^\infty(\omd),||\cdot||_{  W^{1,p}(\omd)})$, which is dense in $W_0 ^{1,p}(\Omega)$, we  can define two mappings  $$\Lambda_{p}:W^{1,p}_0 (\Omega)\to W^{-1,p'}(\omd)' \et \Lambda_{p}': W^{-1,p'}(\omd)\to W^{1,p}_0(\Omega)'$$  by setting for $T\in W^{-1,p'}(\Omega)$ and  $u\in W^{1,p}_0(\Omega)$:
 $$\Lambda_{p}u(T)= \Lambda'_p T(u)=T(u). $$
 
The following lemma provides some well-known facts that are not specific to subanalytic domains, and that will be of service to derive our duality result.

\begin{lem}\label{lem_dual}
 For all $p \in (1,\infty)$, 
  $\Lambda_p$ and $\Lambda'_p$ are isomorphisms,  and we have
  \begin{equation}\label{eq_w0}
   W^{1,p}_0 (\omd)=\{ u\in W^{1,p} (\omd) : \forall \, \beta \in W^{1,p'}_\nabla (\omd), \;\; <u,\nabla \beta>= -<\pa u, \beta>\},
  \end{equation}
  where $ W^{1,p'}_\nabla (\omd):=\{ \beta\in L^{p'} (\omd)^n : \nabla \beta \in L^{p'}(\omd)   \}$, $\nabla$ denoting the divergence operator.
\end{lem}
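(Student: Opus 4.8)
The statement is a standard duality fact once one has the right density input, and the plan is to deduce it from the definitions together with Remark \ref{rem_repres_functionals} and basic functional analysis. First I would treat $\Lambda_p$ and $\Lambda'_p$: since $\cc_0^\infty(\omd)$ is dense in $W^{1,p}_0(\omd)$ and the elements of $W^{-1,p'}(\omd)$ are by definition continuous for $\|\cdot\|_{W^{1,p}(\omd)}$, each $T\in W^{-1,p'}(\omd)$ extends uniquely to an element of $W^{1,p}_0(\omd)'$; this makes $\Lambda'_p$ well-defined, and one checks it is an isometric (or at least bicontinuous) linear bijection onto $W^{1,p}_0(\omd)'$. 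Injectivity is clear (an extension that vanishes on a dense set is zero). Surjectivity is where Remark \ref{rem_repres_functionals} enters: an arbitrary $\phi\in W^{1,p}_0(\omd)'$ extends by Hahn--Banach to $W^{1,p}(\omd)'$, hence is represented as $\alpha+\pa^*\beta$ with $\alpha\in L^{p'}(\omd)$, $\beta\in L^{p'}(\omd)^n$, i.e.\ as $\sum_{i=0}^n\pa_i f_i$ with $f_0=\alpha$, $(f_1,\dots,f_n)=-\beta$ (signs up to the convention for $\pa^*$); restricting to $\cc_0^\infty(\omd)$ this is exactly an element $T$ of $W^{-1,p'}(\omd)$ with $\Lambda'_p T=\phi$. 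Boundedness of $\Lambda'_p$ and of its inverse follows from comparing the defining norm of $W^{-1,p'}(\omd)$ with the operator norm, using that the infimum in $\|T\|_{W^{-1,p'}(\omd)}$ is attained (closed image of $\mathbf A_{p'}$) on the one hand, and the open mapping theorem on the other. Then $\Lambda_p$ is simply the transpose-type map induced by the same pairing; since $W^{-1,p'}(\omd)$ with its norm is a Banach space on which $\Lambda_p u$ acts by $T\mapsto T(u)$, and $\Lambda'_p$ is an isomorphism, $\Lambda_p$ is the restriction to $W^{1,p}_0(\omd)$ of the canonical map into $W^{-1,p'}(\omd)'$; it is injective because a nonzero $u\in W^{1,p}_0$ can be separated from $0$ by some functional in $W^{1,p}_0(\omd)'=\Lambda'_p(W^{-1,p'}(\omd))$, and it is onto by reflexivity-type bookkeeping (or directly: its image is dense since $\Lambda'_p$ is injective, and closed since $\Lambda_p$ is bounded below, the latter again by separation).

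For the identity \eqref{eq_w0}, I would argue both inclusions directly from the definition of $W^{1,p}_0(\omd)$ as the closure of $\cc_0^\infty(\omd)$. For ``$\subset$'': if $u=\lim u_j$ with $u_j\in\cc_0^\infty(\omd)$ and $\beta\in W^{1,p'}_\nabla(\omd)$, then $<u_j,\nabla\beta>=-<\pa u_j,\beta>$ is just integration by parts (legitimate because $u_j$ is smooth and compactly supported, so no boundary terms), and passing to the limit using $u_j\to u$ in $L^p$, $\pa u_j\to\pa u$ in $L^p$, and $\beta,\nabla\beta\in L^{p'}$ gives the asserted relation for $u$. For ``$\supset$'': let $u\in W^{1,p}(\omd)$ satisfy the relations; I want $u\in W^{1,p}_0(\omd)$. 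This is where the duality half of the lemma does the work: consider the functional on $W^{-1,p'}(\omd)$; every $T\in W^{-1,p'}(\omd)$ is $\sum\pa_i f_i$, so formally $T(u)=\sum_i<\pa_i f_i,u>=<\pa_0 f_0,u>+\sum_{i\ge1}<\pa_i f_i,u>$. The vanishing condition ``$<u,\nabla\beta>=-<\pa u,\beta>$ for all $\beta\in W^{1,p'}_\nabla$'' says precisely that pairing $u$ against the ``pure divergence part'' $\sum_{i\ge1}\pa_i f_i$ equals $-\sum_{i\ge 1}<\pa_i u,f_i>=<u,\text{(stuff)}>$ in the way that matches the value $T$ would take on the $\cc_0^\infty$-closure; unwinding this shows the linear functional $T\mapsto T(u)$ on $W^{-1,p'}(\omd)$ coincides with $\Lambda_p$ applied to the $W^{1,p}_0$-element one gets, hence (by injectivity of $\Lambda_p$, equivalently by surjectivity of $\Lambda'_p$) $u$ must lie in $W^{1,p}_0(\omd)$. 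Concretely: the condition guarantees that the map $\cc_0^\infty(\omd)\ni\varphi\mapsto <u,\varphi>_{W^{1,p}\text{-pairing}}$ extends to all test functions in a way consistent with being in the closure, and then one invokes the Hahn--Banach characterization of closed subspaces (a point of a Banach space lies in a closed subspace iff it is annihilated by every functional annihilating that subspace) with the subspace $W^{1,p}_0(\omd)\subset W^{1,p}(\omd)$, whose annihilator in $W^{1,p}(\omd)'\cong L^{p'}(\omd)^{n+1}/(\text{image of }\mathbf A_{p'})^\perp$ is represented exactly by the $(\alpha,\beta)$ giving rise, via $\alpha+\pa^*\beta$, to functionals vanishing on $\cc_0^\infty(\omd)$, i.e.\ those with $\alpha=\sum\pa_i\beta_i$ in the distributional sense — which is the $\beta\in W^{1,p'}_\nabla$ bookkeeping.

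The main obstacle I anticipate is getting the annihilator computation and the sign conventions exactly right: one has to identify $W^{1,p}(\omd)'$ with a quotient of $L^{p'}(\omd)^{n+1}$ via $\mathbf A_{p'}'$ (Remark \ref{rem_repres_functionals}), describe which functionals in this picture vanish on $\cc_0^\infty(\omd)$, and check that this description matches the set $\{\beta\in W^{1,p'}_\nabla(\omd)\}$ paired as in \eqref{eq_w0}; the integration-by-parts identity that a functional $\varphi\mapsto<\alpha,\varphi>+<\beta,\pa\varphi>$ vanishes on all $\varphi\in\cc_0^\infty(\omd)$ iff $\alpha=-\mathrm{div}\,\beta=-\nabla\beta$ distributionally is the crux, and then \eqref{eq_w0} is a clean restatement. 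Everything else — density of $\cc_0^\infty(\omd)$ in $W^{1,p}_0(\omd)$ (definitional), closedness of $\mathrm{im}\,\mathbf A_{p'}$ and attainment of the infimum, the open mapping theorem — is routine Banach space theory for $1<p<\infty$, and I would keep those steps brief.
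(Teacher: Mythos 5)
Your proposal is correct and for the most part follows the same route as the paper: injectivity of both maps is immediate, surjectivity of $\Lambda'_p$ comes from Remark \ref{rem_repres_functionals} (your explicit Hahn--Banach extension step is exactly what the paper's ``direct consequence'' is shorthand for), and the inclusion $Z_p\subset W^{1,p}_0(\omd)$ in \eqref{eq_w0} is proved, in both your ``Concretely'' paragraph and the paper, by the Hahn--Banach characterization of the closed subspace $W^{1,p}_0(\omd)$: a functional $\alpha+\pa^*\beta$ vanishing on $\cc_0^\infty(\omd)$ forces $\alpha=\nabla\beta$ distributionally, hence $\beta\in W^{1,p'}_\nabla(\omd)$, and then such a functional vanishes on $Z_p$ by definition of $Z_p$.

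Where you genuinely diverge from the paper is in the surjectivity of $\Lambda_p$. The paper is constructive: it introduces the auxiliary map $\mathbf B_p:L^{p'}(\omd)^{n+1}\to W^{-1,p'}(\omd)$, $(\alpha,\beta)\mapsto\alpha+\pa^*\beta$, pulls a given $T\in W^{-1,p'}(\omd)'$ back by $\mathbf B_p^*$ to a functional on $L^{p'}(\omd)^{n+1}$, represents it by a pair $(u,v)\in L^p(\omd)^{n+1}$, uses vanishing on $\ker\mathbf B_p$ to get $v=\pa u$, and then uses \eqref{eq_w0} to conclude $u\in W^{1,p}_0(\omd)$ and $T=\Lambda_p u$. (This is why the paper proves \eqref{eq_w0} before surjectivity of $\Lambda_p$.) You instead observe that $\Lambda_p$ is $(\Lambda'_p)^*$ composed with the canonical map of $W^{1,p}_0(\omd)$ into its bidual, and invoke reflexivity of $W^{1,p}_0(\omd)$ for $1<p<\infty$; since $\Lambda'_p$ is already known to be an isomorphism, its adjoint is one, and the claim follows without appealing to \eqref{eq_w0} at all. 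Both are valid; yours is shorter and more structural, the paper's gives an explicit preimage and exhibits the role of \eqref{eq_w0}. Two minor points to tidy: your first pass at the $\supset$ inclusion in \eqref{eq_w0} (``by injectivity of $\Lambda_p$'') is circular as stated and should be discarded in favour of the ``Concretely'' annihilator argument, which is the actual proof; and the sign in ``$\alpha=-\operatorname{div}\beta$'' is off relative to the conventions of the paper (it should be $\alpha=\nabla\beta$, as you in fact write a few lines later), though you flagged the sign ambiguity yourself.
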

\begin{proof}As  $\Lambda_p$ and $\Lambda'_p$ are  clearly injective, it suffices to show that they are onto. The ontoness of  $\Lambda'_p$ is a direct consequence of  Remark \ref{rem_repres_functionals}.

 Let us show (\ref{eq_w0}). Let $Z_p$ denote the set which stands on the right-hand-side of this equality. As we clearly have $W^{1,p}_0 (\omd)\subset Z_p$, let us focus on the proof of the reversed inclusion, for which we will show that every continuous linear functional $T:W^{1,p}(\omd)\to \R $ that vanishes on $\cc_0^\infty (\omd)$ is identically zero on $Z_p$. By Remark \ref{rem_repres_functionals}, such a functional $T$ can be written $\alpha+\pa \beta$ for  some  $\alpha\in L^{p'}(\omd)$ and $\beta\in  L^{p'}(\omd)^n$. As for every $\varphi \in \cc_0^\infty(\omd)$, we have $<\alpha,\varphi>+<\beta,\pa \varphi>=0$, we see that $\alpha=\nabla \beta$ in the sense of distribution, which means that $\beta \in W^{1,p'}_\nabla (\omd)$, so that
 $$T(u)=<\nabla \beta,u>+<\beta,\pa u>=0, $$
 for every $u\in Z_p$, as required.
 
It remains to check that $\Lambda_p$ is onto. Let $\mathbf{B}_p:L^{p'}(\omd)^{n+1}\to W^{-1,p'}(\omd)$ be the continuous linear mapping defined by $\mathbf B _p (\alpha,\beta):= \alpha +\pa\beta$ for $\alpha\in L^{p'}(\omd)$ and $\beta\in L^{p'}(\omd)^n$. Take $T\in W^{-1,p'}(\omd)'$ and observe that, as the pull-back $\mathbf{B} _p^* T$ is a continuous functional on $L^{p'}(\omd)^{n+1}$, there are $u\in L^p(\omd)$ and $v\in  L^{p}(\omd)^n$ such that
\begin{equation}\label{eq_bstarp}\mathbf{B} _p^*T(\alpha,\beta)=<u,\alpha> + <v,\beta>,\end{equation} for all $\alpha\in L^{p'}(\omd)$ and $\beta\in  L^{p'}(\omd)^n$. Since $\mathbf{B} _p^*T$ vanishes on $\ker \mathbf B _p$, we deduce that $v=\pa u$ in the distribution sense, which means that $u\in W^{1,p}(\omd)$. Moreover, we also see that for all $\beta \in W^{1,p'}_\nabla (\omd)$ we have (as $(\nabla\beta, \beta)\in \ker \mathbf B _p$)
$$<u,\nabla \beta>+<\pa u, \beta>=\mathbf{B} _p^*T(\nabla \beta,\beta)=0,$$
which yields that $u\in Z_p\overset{(\ref{eq_w0})}= W^{1,p}_0 (\omd)$. Note that, since $v=\pa u$, (\ref{eq_bstarp}) establishes that $\mathbf B_p^* T(\alpha,\beta)=\mathbf B_p^*\Lambda_pu(\alpha,\beta)$ for all $\alpha\in L^{p'}(\omd)$ and $\beta\in  L^{p'}(\omd)^n$. Since $\mathbf B_p$ is surjective, this shows that  $T=\Lambda_pu$, yielding that $\Lambda_p$ is onto.
\end{proof}

This leads us to the following result which naturally extends the duality that holds on Lipschitz domains.

\begin{pro}\label{pro_dual}
       For every $p\in (1,\pom]$ (not infinite),   $\Lambda_p$ and $\Lambda'_p$ induce isomorphisms $$W^{1,p}(\omd,\pau \omd)\simeq W^{-1,p'}(\omd)'\et W^{-1,p'}(\omd)\simeq W^{1,p}(\omd,\pau \omd)'.$$
  If $\omd$ is  fenced, this remains true for each $p\in [1,\infty)$ sufficiently large. 
\end{pro}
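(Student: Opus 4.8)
The plan is to reduce the isomorphism statement to Lemma \ref{lem_dual} by identifying $W^{1,p}(\omd,\pau\omd)$ with $W^{1,p}_0(\omt)$ and $W^{-1,p'}(\omd)$ with $W^{-1,p'}(\omt)$. More precisely, for $p\in(1,\pom]$ we first invoke Remark \ref{rem_omh}, which gives $W^{1,p}(\omd,\pau\omd)=W^{1,p}_0(\omt)$ (combining the removability of $\omt\setminus\omd$ with Corollary \ref{cor_densite_A_vide}\,$(\ref{item_avide_ker})$ applied to $\omt$, noting that $\omt$ is fenced so that $W^{1,p}(\omt,\pau\omt)=W^{1,p}_0(\omt)$ by (\ref{eq_fenced})—and here we must check that $\pom$ for $\omd$ matches the relevant exponent for $\omt$, which holds because $\delta\omt\setminus\pau\omt$ and $\delta\omd\setminus\pau\omd$ differ only in a set of dimension $<n-1$). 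Simultaneously, (\ref{eq_wm1}) gives $W^{-1,p'}(\omd)=W^{-1,p'}(\omt)$, and this identification is isometric by construction of the norms. Under these identifications, the maps $\Lambda_p,\Lambda'_p$ attached to $\omd$ are literally the maps $\Lambda_p,\Lambda'_p$ attached to $\omt$, and Lemma \ref{lem_dual} (applied to the fenced domain $\omt$) yields that both are isomorphisms. This proves the two claimed isomorphisms $W^{1,p}(\omd,\pau\omd)\simeq W^{-1,p'}(\omd)'$ and $W^{-1,p'}(\omd)\simeq W^{1,p}(\omd,\pau\omd)'$.

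For the case ``$\omd$ fenced and $p\in[1,\infty)$ sufficiently large'', we argue directly: when $\omd$ is fenced we have $\omt=\omd$ by Remark \ref{rem_fenced}, and for $p$ large (\ref{eq_fenced}) gives $W^{1,p}(\omd,\pau\omd)=W^{1,p}_0(\omd)$; moreover Proposition \ref{pro_extension}\,$(\ref{item_ext_dist})$ is not needed here since $\omt=\omd$ forces $W^{-1,p'}(\omd)=W^{-1,p'}(\omt)$ trivially. Thus Lemma \ref{lem_dual}, whose proof uses only functional analysis (the closed range theorem for $\mathbf A_p$, the surjectivity of $\mathbf B_p$) together with the identity $W^{1,p}_0(\omd)=Z_p$ from (\ref{eq_w0}), applies verbatim and gives the conclusion for all such $p$. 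One should remark that the only genuinely ``subanalytic'' input in the whole argument is the density statement $W^{1,p}(\omd,\pau\omd)=W^{1,p}_0(\omt)$, i.e. Corollary \ref{cor_densite_A_vide}; everything downstream is soft.

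The main obstacle I anticipate is not analytic but bookkeeping: verifying that the passage $\omd\rightsquigarrow\omt$ is compatible with all three structures at once—the Sobolev space $W^{1,p}$, its trace-kernel subspace, and the negative space $W^{-1,p'}$—and in particular that the exponent threshold is genuinely $\pom$ and not something smaller after passing to $\omt$. The cleanest way to dispatch this is to note that $\adh{\omt}=\adh{\omd}$ and $\delta\omt\subset\delta\omd$ with $\dim(\delta\omd\setminus\delta\omt)\le n-1$ actually $<n-1$ on the regular part, so that $\pms$ computed for $\omt$ equals $\pms$ computed for $\omd$, whence $\pom$ is unambiguous. Once this is in place, the proof is essentially a citation of Lemma \ref{lem_dual} and Remark \ref{rem_omh}, and the only thing to write carefully is the sentence explaining why the two pairs of morphisms coincide under the identifications.
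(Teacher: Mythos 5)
Your proof is correct, but it takes an unnecessary detour. The paper's own argument for $p\in(1,\pom]$ applies Lemma~\ref{lem_dual} directly to $\omd$ (valid for all $p\in(1,\infty)$), which gives $\Lambda_p\colon W^{1,p}_0(\omd)\simeq W^{-1,p'}(\omd)'$ and $\Lambda'_p\colon W^{-1,p'}(\omd)\simeq W^{1,p}_0(\omd)'$; then Corollary~\ref{cor_densite_A_vide}$\,(\ref{item_avide_ker})$ applied to $\omd$ identifies $W^{1,p}(\omd,\pau\omd)$ with $W^{1,p}_0(\omd)$, and one is done. There is no need to pass to $\omt$, to invoke \eqref{eq_wm1}, or to check compatibility of the morphisms under the $\omd\rightsquigarrow\omt$ identification: the passage to $\omt$ (and, with it, the fence condition and the exponent bookkeeping you discuss) is only needed for the ``$p$ large'' regime, where Corollary~\ref{cor_densite_A_vide}$\,(\ref{item_avide_ker})$ is not applicable and one falls back on \eqref{eq_fenced}. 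Your route is still valid — your claim that $\pms$ for $\omt$ is at least $\pom$ holds (it could in principle be strictly larger, since $\delta\omd\setminus\pau\omd$ is the union of $\delta\omt\setminus\pau\omt$ with $\omt\setminus\omd$, and the second set, though of dimension $<n-1$, need not be smaller than the first), which is the right direction — but it is more machinery than the statement needs. For the second sentence (fenced, $p$ large) your argument matches the paper's, and your closing remark that the only genuinely subanalytic input is the density/identification step, with Lemma~\ref{lem_dual} being purely soft functional analysis, is exactly the right way to think about it.
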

 \begin{proof}
  In virtue of Lemma \ref{lem_dual}, $\Lambda_p$ and $\Lambda_p'$ are isomorphisms.  The first sentence thus follows from $ (\ref{item_avide_ker}) $ of Corollary \ref{cor_densite_A_vide}, and the second one is a consequence of (\ref{eq_fenced}).
 \end{proof}

 \begin{rem}\label{rem_dual}
 The assumption ``fenced'' in the last sentence of this proposition can be avoided by applying this proposition to $\omt$.
 Indeed, it is worthy of notice that thanks to Remark \ref{rem_omh},  we then get for all $p\in (1,\pom]$ or sufficiently large (not infinite): $$W^{1,p}(\omd,\pau \omd)\simeq W^{-1,p'}(\omt)'\et W^{-1,p'}(\omt)\simeq W^{1,p}(\omd,\pau \omd)'.$$
   \end{rem}

\end{subsection}
\begin{subsection}{Applications the Dirichlet problem.}
As promised, we give an application of the material that we have developed to the study of the Dirichlet problem associated with the Laplace equation.

We denote by $\Delta:W^{1,2}(\omd) \to W^{-1,2}(\omd)$  the linear mapping induced by the Laplace operator $\sum_{i=1}^n \pa_i^2$.
      
      \begin{thm}\label{thm_dir} 
       For every $f\in W^{-1,2}(\omd)$, the equation
       $$\begin{cases}
 \Delta u=f \qquad \mbox{on } \omd,\\
u=0 \qquad \mbox{on } \pa \omd
\end{cases}$$
      has a unique (weak) solution in $W^{1,2}(\omd)$.
      \end{thm}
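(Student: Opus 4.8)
The plan is to recast the Dirichlet problem as an equation in the Hilbert space $W_0^{1,2}(\omd)$ and invoke the Lax--Milgram theorem (equivalently, the Riesz representation theorem), exactly as in the classical treatment on Lipschitz domains. First I would note that, since we are told $\pom\ge 2$, Corollary~\ref{cor_densite_A_vide}~$(\ref{item_avide_ker})$ applies with $p=2$, giving $W^{1,2}(\omd,\pa\omd)=W_0^{1,2}(\omd)$; thus ``$u=0$ on $\pa\omd$'' in the statement is interpreted as $u\in W_0^{1,2}(\omd)$, and this is the space in which we seek the solution. On $W_0^{1,2}(\omd)$ we use the bilinear form $a(u,v):=-\langle\pa u,\pa v\rangle=-\int_\omd \pa u\cdot\pa v$, which is obviously bounded; its coercivity on $W_0^{1,2}(\omd)$ is exactly the Poincar\'e--Friedrichs inequality for subanalytic domains (available from \cite{poincwirt} / the Poincar\'e--Friedrichs inequalities announced in the introduction), which gives $\|u\|_{L^2(\omd)}\lesssim\|\pa u\|_{L^2(\omd)}$ for $u\in W_0^{1,2}(\omd)$ and hence $a(u,u)=\|\pa u\|_{L^2(\omd)}^2\gtrsim\|u\|_{W^{1,2}(\omd)}^2$.

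Next I would check that the right-hand side makes sense as a bounded functional: for $f\in W^{-1,2}(\omd)$, writing $f=\sum_{i=0}^n\pa_i f_i$ with $f_i\in L^2(\omd)$, the map $v\mapsto \langle f,v\rangle$ is by definition a continuous linear functional on $(\cc_0^\infty(\omd),\|\cdot\|_{W^{1,2}(\omd)})$, hence extends uniquely to $W_0^{1,2}(\omd)$; concretely $\langle f,v\rangle=\langle f_0,v\rangle-\sum_{i=1}^n\langle f_i,\pa_i v\rangle$. Lax--Milgram then yields a unique $u\in W_0^{1,2}(\omd)$ with $a(u,v)=\langle f,v\rangle$ for all $v\in W_0^{1,2}(\omd)$, i.e. $-\langle\pa u,\pa v\rangle=\langle f,v\rangle$ for all $v\in\cc_0^\infty(\omd)$; by definition of the weak Laplacian $\Delta:W^{1,2}(\omd)\to W^{-1,2}(\omd)$ this is precisely $\Delta u=f$ in $W^{-1,2}(\omd)$. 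So existence and uniqueness of a solution lying in $W_0^{1,2}(\omd)=W^{1,2}(\omd,\pa\omd)$ follow.

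The only genuinely non-classical point --- and the step I would present most carefully --- is the uniqueness \emph{within all of} $W^{1,2}(\omd)$ with the boundary condition interpreted as vanishing trace, together with the compatibility of ``$\Delta u=f$ on $\omd$'' understood via test functions in $\cc_0^\infty(\omd)$ versus the larger space $\cc_0^\infty(\omt)$. Here Remark~\ref{rem_omh} and Proposition~\ref{pro_extension}~$(\ref{item_ext_dist})$ are what make things work: since $p=2\le\pom$, every distribution continuous for $\|\cdot\|_{W^{1,2}(\omd)}$ extends uniquely to $\cc_0^\infty(\omt)$, and $W^{1,2}(\omd,\pa\omd)=W^{1,2}(\omt,\pa\omt)=W_0^{1,2}(\omt)$, so testing against $\cc_0^\infty(\omd)$ is as strong as testing against the full space of admissible test functions, and no spurious solutions supported near the ``low-dimensional'' part of $\delta\omd$ (cf. Example~\ref{exa_disk}) are admitted. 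Concretely, if $u_1,u_2\in W^{1,2}(\omd,\pa\omd)$ both solve the problem then $w:=u_1-u_2\in W_0^{1,2}(\omd)$ satisfies $\langle\pa w,\pa v\rangle=0$ for all $v\in\cc_0^\infty(\omd)$, hence for all $v\in W_0^{1,2}(\omd)$ by density, and taking $v=w$ gives $\|\pa w\|_{L^2(\omd)}=0$, whence $w=0$ by Poincar\'e--Friedrichs. This is essentially the only place where the subanalytic density theorem \ref{thm_dense_lprime} enters; the rest is the standard variational argument, so I expect no serious obstacle beyond stating these interpretative points correctly.
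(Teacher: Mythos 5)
Your proposal is correct and follows essentially the same route as the paper's own proof: use Corollary~\ref{cor_densite_A_vide}~$(\ref{item_avide_ker})$ at $p=2$ (legitimate since $\pom\ge 2$) to identify $W^{1,2}(\omd,\pa\omd)=W_0^{1,2}(\omd)$ and to make $f$ act on it, establish the Poincar\'e--Friedrichs inequality $\|u\|_{L^2(\omd)}\lesssim\|\pa u\|_{L^2(\omd)}$ on that space, and conclude by Lax--Milgram. The only blemish is a sign slip: you define $a(u,v):=-\langle\pa u,\pa v\rangle$ but then write $a(u,u)=\|\pa u\|_{L^2(\omd)}^2$; the coercive form should be $a(u,v):=\langle\pa u,\pa v\rangle$, paired with $-\langle f,v\rangle$ on the right-hand side, which is what your coercivity computation implicitly uses.
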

\begin{proof}
  Since $\pom \ge 2$,   Corollary \ref{cor_densite_A_vide} $(i)$ yields that $f$ induces a functional on $W^{1,2}(\omd, \pa \omd)$.      Observe in addition that the density of $\cc_0^\infty(\omd)$ in $W^{1,2}(\omd, \pa \omd)$ also entails that $||u||_{L^2(\omd)} \lesssim ||\pa u||_{L^2(\omd)}$ for  $u\in W^{1,2}(\omd, \pa \omd)$ (one could also make use here of Proposition \ref{pro_gen_poinc1} below or of the Poincar\'e inequality proved in \cite{poincfried}), which, by Lax-Milgram's Theorem, establishes existence and uniqueness of the solution.
\end{proof} 

\begin{rem}
The problem of the continuity of the solution at boundary points is investigated  in \cite{k1}. Theorem \ref{thm_holder} below is likely to bring new information on this issue.
\end{rem}
\end{subsection}
\end{section}

\begin{section}{Sobolev embeddings and Poincar\'e inequality} In this section, we derive some inequalities that come down from the local estimates of section \ref{sect_estimates_loc}. We start with a generalization of Sobolev's Embedding Theorem:
\begin{thm}\label{thm_embedding}
 There is a positive real number $\mu$ such that for all $p$ and $q$ in $[1,\infty)$ with $p\le q \le p+\mu$, there are $\alpha_{p,q}\in[0,1)$ and  $C>0$ such that for all $u\in W^{1,p}(M)$ we have:
 \begin{equation}\label{ineq}
  ||u||_{L^q(M)} \le C ||u||_{L^p(M)} ^{1-\alpha_{p,q}} \cdot ||u||_{W^{1,p}(M)} ^{\alpha_{p,q}}.
 \end{equation}
Moreover,  the embedding $W^{1,p}(M)\hookrightarrow L^q(M)$ is compact for all such $p$ and $q$, and we have: \begin{equation}\label{eq_alpha_lip}\alpha_{p,q}\le c|p-q|.\end{equation}
              for some constant $c$ independent of $p$ and $q$.                      
\end{thm}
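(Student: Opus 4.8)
The strategy is to reduce everything to the local model of Theorem~\ref{thm_local_conic_structure} via a stratification and a partition of unity, prove a local Gagliardo--Nirenberg-type interpolation on each cone, and then patch. First I would fix a locally bi-Lipschitz trivial stratification $\Sigma$ of $\mba$ (Theorem~\ref{pro_existence_stratifications}) and a subordinate partition of unity, so that it suffices to prove \eqref{ineq} and the compactness for the germ of $u$ at a point $x_0\in\delta M$, working in a chart $U_{x_0}\cap M$ bi-Lipschitz equivalent to a product $F^\ep\times W_0$ as in the proof of Theorem~\ref{thm_dense_lprime}. On $F^\ep$ one has the Lipschitz conic structure with the retraction $R^\eta_t$ and the Jacobian/derivative bounds \eqref{eq_jad_reta}--\eqref{eq_par}, exactly as exploited in Lemmas~\ref{lem_u_neta} and~\ref{lem_umeta}.

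The number $\mu$ should be extracted from the exponent $\nu$ appearing in \eqref{eq_jacr_s}/\eqref{ln}/\eqref{lnm}: those estimates say that composition with $r_s$ costs at most $s^{-\nu/p}$ in the $L^p$ norm, so interpolating between $L^p$ and $W^{1,p}$ along the cone trajectories gains a little integrability as long as $q-p$ is small relative to $1/\nu$ (uniformly over the finitely many strata one takes the max of the local $\nu$'s). Concretely, the plan is: for $u\in\cc^\infty(M)$ with small support near $x_0$, decompose $M^\ep=\bigsqcup_k (M^{2^{-k}\ep}\setminus M^{2^{-k-1}\ep})$ into dyadic annuli, on each annulus $r_s$ is genuinely bi-Lipschitz so the classical (Lipschitz-domain) Gagliardo--Nirenberg inequality applies with a scaling-explicit constant, then sum the resulting geometric-type series; convergence of the sum is where the gap $q\le p+\mu$ enters, and tracking the scaling exponents gives the interpolation exponent $\alpha_{p,q}$ together with the Lipschitz bound \eqref{eq_alpha_lip} (the exponent of $s$ in each annulus contribution is affine in $q$, hence $\alpha_{p,q}$ is Lipschitz in $q$ near $q=p$, with $\alpha_{p,p}=0$ forced by taking $q=p$). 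One then passes from $\cc^\infty(M)$ to all of $W^{1,p}(M)$ by the density of $\cc^\infty(M)$ in $W^{1,p}(M)$ and Fatou, exactly as in Lemma~\ref{lem_umeta}.

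For compactness of $W^{1,p}(M)\hookrightarrow L^q(M)$ with $p\le q\le p+\mu$, I would argue by the usual Fréchet--Kolmogorov / truncation scheme adapted to the cone: given a bounded sequence $(u_i)$ in $W^{1,p}(M)$, the local estimates \eqref{eq_m_eta_ppt}--\eqref{eq_m_eta_pgd} (and their $L^q$ analogues obtained from the interpolation just proved) show that the mass of $u_i$ carried on the $\eta$-neighborhood $M^\eta$ of the fence is $O(\eta^{\delta})$ in $L^q$, uniformly in $i$, for some $\delta>0$; hence the tails near the singular frontier are uniformly small. Away from the fence, $M$ is a bounded Lipschitz (indeed smooth) manifold with boundary, where Rellich--Kondrachov gives a convergent subsequence in $L^q$ of a fixed relatively compact piece. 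A diagonal argument over an exhausting family of such pieces, combined with the uniform tail bound, produces an $L^q(M)$-convergent subsequence.

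**Main obstacle.** The delicate point is making the dyadic-annulus summation genuinely uniform: one must check that the constants in the local Gagliardo--Nirenberg inequality on each annulus, after rescaling to unit size, are bounded independently of $k$ and of $p$ in a neighborhood of each value (Remark~\ref{rem_neta}(a) is the kind of uniformity needed), and that the geometric series converges with a rate controlled only by $p-q$ and the fixed integer $\nu$ — this is exactly what forces $\mu$ to be a fixed positive number not depending on $p$, and it is the crux of the whole theorem. Handling the borderline exponents $p=m$ (where logarithmic factors appear, cf. \eqref{eq_m_eta_pm}) requires absorbing the $\ln\frac1\eta$ losses into the summation, which is possible precisely because the gain from $q>p$ is a genuine power of $\eta$ that beats any fixed power of the logarithm.
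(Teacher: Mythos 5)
Your overall strategy --- localize via a stratification, extract $\mu$ from the \L ojasiewicz exponent $\nu$ in \eqref{eq_jacr_s}, and prove compactness by combining Rellich--Kondrachov on compact interior pieces with a uniform $L^q$ tail bound near $\delta M$ --- is aligned with the paper, and the compactness step is essentially the argument the paper uses. However, there is a genuine gap at the step where you invoke ``the classical (Lipschitz-domain) Gagliardo--Nirenberg inequality'' on each dyadic annulus $M^{2^{-k}\ep}\setminus M^{2^{-k-1}\ep}$. These annuli are \emph{not} Lipschitz domains in general: an annulus near $x_0$ is bi-Lipschitz to roughly $N^\ep\times[1/2,1]$, where $N^\ep=\sph(x_0,\ep)\cap M$ is the link, and $N^\ep$ is itself a bounded subanalytic manifold whose frontier $\delta N^\ep$ can be singular whenever a singular stratum of $\delta M$ emanates from $x_0$ (for $m\ge 3$ this is typical). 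So classical GN simply does not apply on the annuli; and even in the cases where the annuli are Lipschitz, the rescaled constants degenerate polynomially (cf.\ \eqref{eq_r_lip}), contrary to the uniformity you identify as needed in your obstacle paragraph. No amount of tracking scaling exponents recovers a valid local GN on a singular annulus.

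The missing idea --- which is the structural core of the paper's proof --- is \emph{induction on the dimension $m$}. The paper applies the theorem inductively to the link $N^\ep$, a subanalytic manifold of dimension $m-1$, getting \eqref{ineq} there with some $\tilde\mu>0$ and exponent $\tilde\alpha_{p,\tilde p}$. Pulling this back to $N^{s\ep}$ via $r_s$ costs a factor $s^{-\nu/p}$ by \eqref{ln}; one then interpolates in $q$ between $L^p(N^{s\ep})$ and the inductively improved $L^{\tilde p}(N^{s\ep})$ via H\"older \eqref{eq_holder}, and finally integrates over $s\in(0,1)$ using the coarea estimate \eqref{eq_coarea_sph}, with a three-factor H\"older inequality on $(0,1)$ absorbing the singular weight $s^{-\nu\theta/p}$ (this is precisely where $q\le p+\mu$, with $\mu$ chosen in terms of $\tilde\mu$ and $\nu$, is needed to make $s^{-\nu\theta/p}$ integrable). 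Your dyadic sum is a discretization of that $s$-integral, but without the induction on dimension there is no valid GN inequality on the annuli to sum. The explicit identification $\alpha_{p,q}=k\theta$ with $\theta$ given by \eqref{eq_theta} then yields the Lipschitz bound \eqref{eq_alpha_lip} directly, which is not transparent in an annulus-summation scheme.
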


\begin{proof} As $\cc^\infty(M)$ is dense in $W^{1,p}(M)$, it suffices show (\ref{ineq}) for a $\cc^\infty$ function $u$.  The compactness of this embedding will be established afterwards. We argue by induction on $m$, the case $m=0$ being vacuous. 

Since we can use a partition of unity,  it is enough to prove that every point $x_0\in \delta M$ has a neighborhood $\uxo$ such that $(\ref{ineq})$ holds for every function $u\in W^{1,p}(M)$ satisfying $\supp\, u \subset \uxo$, and we can assume $x_0=0$. 

    Let $r_s$, $\ep$, $M^\eta$, and $N^\eta$ (for $\eta\le \ep$), be as in section \ref{sect_lcs_mba}, and apply the induction hypothesis to $N^\ep$. This provides a constant $\tilde{\mu}$ such that for every $p$ and $q$ satisfying $p\le q\le p+\tilde{\mu}$, there is $\tilde{\alpha}_{p,q}\in [0,1)$  such that for all $v\in W^{1,p}(\nep)$:
  \begin{equation}\label{ineq_hr}
  ||v||_{L^q(\nep)} \lesssim ||v||_{L^p(\nep)} ^{1-\tilde{\alpha}_{p,q}} \cdot ||v||_{W^{1,p}(\nep)} ^{\tilde{\alpha}_{p,q}}.
 \end{equation}
 
 Fix $p\in [1,\infty)$ and set $\tilde{p}:=p+\tilde{\mu}$, as well as $\beta:=\alphat_{p,\tilde{p}}$.  Observe now that, as $r$ is Lipschitz, we have for  $u\in \cc^\infty(M)$ and  $s\in (0,1)$:
 \begin{eqnarray*}
  ||u||_{L^{\tilde{p}}(N^{s\ep})}\lesssim  ||u\circ r_s||_{L^{\tilde{p}}(N^{\ep})} \overset{(\ref{ineq_hr})}{\lesssim}  ||u\circ r_s||_{L^p(N^{\ep})}^{1-\beta}  ||u\circ r_s||_{W^{1,p}(N^{\ep})}^{\beta},
 \end{eqnarray*}
which by (\ref{ln}) (applied to both $u\circ r_s$ and $|\pa u|\circ r_s$, as $|\pa (u\circ r_s)|\lesssim |\pa u|\circ r_s$)  gives
\begin{equation}\label{eq_hr}
  ||u||_{L^{\tilde{p}}(N^{s\ep})} \lesssim  s^{-\frac{\nu}{p}}\cdot ||u||_{L^p(N^{s\ep})}^{1-\beta}\cdot  || u||_{W^{1,p}(N^{s\ep})}^{\beta}.
\end{equation}

Set now $\mu:=\frac{\tilde{\mu}}{(\mut+1)(2\nu+\mut+\beta)}$, fix  $q\in (p,p+\mu]$,  and set \begin{equation}\label{eq_theta}\theta:= \frac{\pet(q-p)}{q(\tilde{p}-p)}= \frac{\tilde{p}(q-p)}{q\tilde{\mu}},\end{equation} in order to have $\frac{1}{q}=\frac{\theta}{ \tilde{p}}+\frac{(1-\theta)}{ p}$. Note that as $(q-p)\le \mu$ and $\frac{\tilde{p}}{q}< \mut+1$, it follows from the definitions of $\mu$ and $\theta$ that \begin{equation}\label{eq_maj_theta}\theta < \frac{1}{2\nu+\mut+\beta}. \end{equation}
As $\frac{1}{q}=\frac{\theta}{ \tilde{p}}+\frac{(1-\theta)}{ p}$, by H\"older's inequality (\ref{eq_holder}), we have for all $u\in \cc^\infty(M)$ and all $s\in (0,1)$ 
\begin{equation*}
 ||u||_{L^q(N^{s\ep})} \le ||u||_{L^p(N^{s\ep})} ^{(1-\theta)} \cdot ||u||_{L^{\tilde{p}}(N^{s\ep})} ^{\theta}\; , 
\end{equation*}
so that, substituting (\ref{eq_hr}) in the right-hand-side, we get:
\begin{eqnarray*}
   ||u||_{L^q (N^{s\ep})}& \lesssim&  ||u||_{L^p(N^{s\ep})} ^{(1-\theta)} \cdot \left( s^{-\frac{\nu}{p}} \cdot||u||_{L^p(N^{s\ep})}^{1-\beta}  \cdot || u||_{W^{1,p}(N^{s\ep})}^{\beta}\right)^{\theta}\\
    &=& s^{-\frac{\nu\theta}{p}} \cdot||u||_{L^p(N^{s\ep})}^{(1-\beta\theta)} \cdot || u||_{W^{1,p}(N^{s\ep})}^{\beta\theta}\;.
\end{eqnarray*}

 If $u$ satisfies  $\supp\, u\subset \bou(0_{\R^n},\ep)$, via (\ref{eq_link_forallp}), this entails that if we set  \begin{equation}\label{eq_k_greater}
                                                                               k:=2\nu+\mut+\beta
                                                                              \end{equation}
  then we get (since  $k\in (\beta,\frac{1}{\theta})$, see (\ref{eq_maj_theta}))
\begin{equation}\label{eq_ulq_avant_h2}
    ||u||_{L^q (N^{s\ep})} \lesssim s^{-\frac{\nu\theta}{p}}\cdot \ln \frac{1}{s\ep}\cdot ||u||_{L^p(N^{s\ep})}^{(1-k\theta)} \cdot || u||_{W^{1,p}(N^{s\ep})}^{\beta\theta} \cdot || u||_{W^{1,p}(M^{\ep})}^{(k-\beta)\theta}\;,
\end{equation}
 and consequently:
\begin{equation}\label{eq_es1}
    ||u||_{L^q (M^{\ep})}\overset{(\ref{eq_coarea_sph})}\lesssim  \left( \int_0 ^1 ||u||_{L^q (N^{s\ep})} ^qds \right)^{1/q} \overset{(\ref{eq_ulq_avant_h2})}\lesssim ||g||_{L^q([0,1])} \cdot || u||_{W^{1,p}(M^{\ep})}^{(k-\beta)\theta}\;,
\end{equation}
where  $$g(s):= s^{-\frac{\nu\theta}{p}}\cdot \ln \frac{1}{s\ep}\cdot ||u||_{L^p(N^{s\ep})}^{(1-k\theta)} \cdot || u||_{W^{1,p}(N^{s\ep})}^{\beta\theta} .$$
 To estimate the $L^q$ norm of $g$, we wish to apply (\ref{eq_holder}) with $\kappa=3$ and  $$p_1=p_2:=p, \qquad g_1(s):=||u||_{L^p(N^{s\ep})}, \quad \theta_1:=(1-k\theta),$$ and $$g_2(s):= || u||_{W^{1,p}(N^{s\ep})}, \qquad\theta_2:=\beta\theta,$$ as well as   with  $$g_3(s):=s^{-\frac{\nu\theta}{p} }\cdot \ln\frac{1}{s\ep}, \qquad  \theta_3:= 1,$$ with $p_3$ such that \begin{equation}\label{eq_theta_3} \frac{\theta_1}{p} +\frac{\theta_2}{p} +\frac{1}{p_3}=\frac{1}{q}.\end{equation}
                                                                                                                                                                                                                                                                                        
This requires to check that such $p_3>0$ exists and, for the value of $p_3$ that we will determine, that $g_3$ is $L^{p_3}$, which amounts to check that $\frac{\nu \theta p_3}{p}<1$. 

To see this, observe that as (by (\ref{eq_k_greater}))  $\theta_1+\theta_2=1-(2\nu+\mut)\theta$, we must have (due to (\ref{eq_theta_3}), $\frac{1}{q}=\frac{\theta}{\pet}+\frac{1-\theta}{p}$, (\ref{eq_theta}), and $\pet=p+\mut$) \begin{equation}\label{eq_p_3}
                                                                                      p_3=\frac{\tilde{\mu}pq}{(q-p)(2\nu\pet+\tilde{\mu}(\pet-1))}>0,
                                                                                     \end{equation}
                                                                                     and, by (\ref{eq_theta}), we then also have
                                                      \begin{equation}
                                                            \frac{\nu \theta p_3}{p}=\frac{\nu\pet}{2\nu\pet+\tilde{\mu}(\pet-1)}
                                                            \le\frac{1}{2}                          
                                                       \end{equation}
(since $\pet\ge 1$), yielding the required fact.  By H\"older's inequality (\ref{eq_holder}), we therefore can conclude for $q\in [p,p+\mu]$:
\begin{equation*}
 ||g||_{L^q((0,1))} \lesssim ||u||_{L^p(M^{\ep})}^{(1-k\theta)}\cdot  || u||_{W^{1,p}(M^{\ep})}^{\beta\theta}.
\end{equation*}

Plugging this inequality into (\ref{eq_es1}), we get for such $q$:
\begin{equation*}
 ||u||_{L^q(\mep)}\lesssim ||u||_{L^p(M^{\ep})}^{1-k\theta}\cdot ||u||_{W^{1,p}(M^{\ep})} ^{k\theta},
\end{equation*} which yields (\ref{ineq}), with $\alpha_{p,q}:=k\theta$, so that
 (\ref{eq_alpha_lip}) then immediately follows from (\ref{eq_theta}) and (\ref{eq_k_greater}).  

It remains to check the compactness of the embeddings.  Fix any $p\in [1,\infty)$ and note first that, thanks to (\ref{ineq}), it suffices to check it for $q=p$.

 Let us fix a bounded subset $\F$ of $W^{1,p}(M)$ and note that as the set  $$K_\eta:=\{x\in M :\forall y\in \delta M, \;|x-y|\ge \eta\} $$ is compact (for each $\eta>0$), it can be covered by finitely many coordinate systems of $M$ that are bi-Lipschitz mappings.  It therefore follows from the compactness of classical Sobolev embeddings that every bounded sequence of $W^{1,p}(M)$ has, for every $\eta$, a subsequence whose restrictions to $K_\eta$ constitute a convergent sequence of $L^p(K_\eta)$. We thus only have to check the sometimes called equivanishing condition i.e., that if we set $Z_\eta:=M\setminus K_{\eta}$ then $||u||_{L^p(Z_\eta)}$ tends to zero as $\eta$ goes to zero uniformly in $u\in \F$.  

Note that it follows from (\ref{ineq}) that
\begin{equation}\label{eq_f_bounded_lq}C:=\sup\{||u||_{L^{p+\mu}(M)}:u\in \F\}<\infty.\end{equation}                                                                                                
 Moreover, it easily follows from Cauchy-Crofton's formula that $\hn^m(Z_\eta)$ goes to zero as $\eta$ tends to zero (see \cite[Proposition $5.3.4$]{livre}).
If $b\in [1,\infty)$ is such that $\frac{1}{p+\mu}+\frac{1}{b}=\frac{1}{p}$, by H\"older's inequality, we therefore have:
$$||u||_{L^p (Z_\eta)} \le \hn^m(Z_\eta)^{\frac{1}{b}}\cdot ||u||_{L^{p+\mu}(M)} \overset{(\ref{eq_f_bounded_lq})}{\le}C \hn^m(Z_\eta)^{\frac{1}{b}}, $$
from which we can conclude that $||u||_{L^p (Z_\eta)}$  tends to zero, uniformly in $u\in \F$.
 \end{proof}
\begin{rem}  It seems that the constant $C$ of this theorem could be required to be independent of $p$ and $q$. Moreover,
 alike in the case of Lipschitz manifolds with boundary, it is possible to prove some embedding theorems for $L^p(M)$ into $W^{-1,q}(M)$ for some $q>p$. For technical reasons,  we have decided to study $W^{-1,p}(M)$ in the only case where $M$ is an open (definable) subset of $\R^n$, in which case this kind of theorems is easy to derive from the classical Sobolev's embeddings, as $L^p(M)$ is easy to embed in $L^p(\R^n)$ in this case. 
\end{rem}

\begin{pro}\label{pro_gen_poinc1}(Generalized Poincar\'e inequality)
 Assume that $M$ is connected and let $U$ be a nonempty open subset of $\pad M$. For every $p\in [1,\infty)$, there is a constant $C$ such that we have for each $u\in W^{1,p}(M)$:
 \begin{equation}\label{ineq_poin}
 ||u||_{L^p(M)}\le C ||\pa u||_{L^p(M)}+C||\tra_{\pa M}\, u||_{L^p(U)}.
 \end{equation} 
\end{pro}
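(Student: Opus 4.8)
The plan is a standard compactness--contradiction argument, the two inputs being the compactness of the embedding $W^{1,p}(M)\hookrightarrow L^p(M)$ (the case $q=p$ of Theorem \ref{thm_embedding}) and the continuity of $\tra_{\pau M}$ in the strong $W^{1,p}_{loc}$ topology (Proposition \ref{pro_trace_loc} together with the construction of $\tra_{\pau M}$ following it). Assume that (\ref{ineq_poin}) fails for every constant $C$; then there is a sequence $u_i\in W^{1,p}(M)$ with $||u_i||_{L^p(M)}=1$ and $||\pa u_i||_{L^p(M)}+||\tra_{\pa M}u_i||_{L^p(U)}\le\frac1i$ for all $i$. In particular $(u_i)$ is bounded in $W^{1,p}(M)$.

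First I would extract, using the compactness of $W^{1,p}(M)\hookrightarrow L^p(M)$, a subsequence (not relabelled) with $u_i\to u$ in $L^p(M)$. Since moreover $\pa u_i\to 0$ in $L^p(M)$, the sequence is Cauchy in $W^{1,p}(M)$, so $u_i\to u$ in $W^{1,p}(M)$; testing against $\cc^\infty_0(M)$ and passing to the limit gives $\pa u=0$. As $M$ is connected, $u$ equals a.e. a constant $c$, and since $M$ is bounded, $|c|\,\hn^m(M)^{1/p}=||u||_{L^p(M)}=\lim_i||u_i||_{L^p(M)}=1$, whence $c\neq 0$.

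Next I would derive a contradiction from the trace term. Fix a nonempty open subset $K$ of $U$ whose closure is a compact subset of $U$; since $U$ is open in $\pau M$, which is $(m-1)$-dimensional wherever nonempty, we may assume $\hn^{m-1}(K)>0$. Convergence $u_i\to u$ in $W^{1,p}(M)$ a fortiori holds in the strong $W^{1,p}_{loc}$ topology, so the continuity of $\tra_{\pau M}$ yields $\tra_{\pau M}u_i\to\tra_{\pau M}u$ in $L^p(K)$. But $||\tra_{\pau M}u_i||_{L^p(K)}\le||\tra_{\pau M}u_i||_{L^p(U)}\to 0$, so $\tra_{\pau M}u=0$ a.e. on $K$. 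On the other hand $u\equiv c$, and by the very definition of the trace each component $\tra^j_i$ (being the extension of the restriction map to the boundary of a manifold with boundary, Proposition \ref{pro_trace_loc}) sends the constant function $c$ to $c$; hence $\tra_{\pau M}u$ has a component identically equal to $c\neq 0$ on $K$, contradicting $\tra_{\pau M}u=0$ a.e. on $K$. This contradiction proves the proposition.

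The only point that needs care is the bookkeeping of the multivalued trace $\tra_{\pau M}$: one must check that on a nonempty open piece of $\pau M$ at least one component of the trace of the nonzero constant $c$ is genuinely $c$, which is immediate from the local description of $\tra_{\pau M}$ through the connected components $Z_{i,j}$ adjacent to $S_i$. Alternatively, one can bypass the multivaluedness by replacing $M$ with a $\cc^\infty$ normalization $\mc$ (Proposition \ref{pro_normal_existence}), which is again connected and on which the trace operator is single-valued, the rest of the argument being unchanged.
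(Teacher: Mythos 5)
Your proposal is correct and follows essentially the same compactness--contradiction argument as the paper: extract a subsequence converging in $W^{1,p}(M)$ via Theorem \ref{thm_embedding}, observe the limit has vanishing gradient hence is constant on the connected $M$, and then use vanishing of the trace on the nonempty open set $U$ to force that constant to be zero, contradicting $||u_i||_{L^p(M)}\equiv 1$. The paper's proof is terser and leaves the bookkeeping of the multivalued trace implicit, whereas you spell it out (and offer the normalization alternative), but the underlying mechanism is identical.
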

\begin{proof}
 We derive it from the compactness of Sobolev's embedding using a very classical argument. Assume that (\ref{ineq_poin}) fails. There is a sequence $u_i\in W^{1,p}(M)$ satisfying $||u_i||_{L^p(M)}\equiv 1$ and $\lim ||\tra_{\pa M} u_i||_{L^p(U)}=\lim ||\pa u_i||_{L^p(M)}=0$. By the compactness of Sobolev's embedding (for $p=q$), extracting a subsequence if necessary,  we can assume this sequence to be convergent in $L^p(M)$. Its limit $u_\infty\in L^p(M)$ then clearly satisfies $\pa u_\infty\equiv 0$, as distribution,  so that, since $M$ is connected, $u_\infty$ must be constant (almost everywhere). Since $||\pa u_i-\pa u_\infty||_{L^p(M)}$ and $|| u_i-u_\infty||_{L^p(M)}$ both tend to zero, we conclude that $u_i$ tends to $u_\infty$ in $W^{1,p}(M)$.  As $||\tra_{\pa M} u_i||_{L^p(U)}$ tends to zero, we deduce that $\tra_{\pa M} u_\infty \equiv 0$ on $U\ne \emptyset$, which means that $u_\infty$, which is constant, must be zero. This contradicts $||u_i||_{L^p(M)}\equiv 1$.
\end{proof}
\begin{rem} This estimate continues to hold for a definable subset  $U\subset \delta M$ of possibly empty interior if $p$ is sufficiently large. Then, of course, one has to write  $||\tra_U u||_{L^p(U,\hn^k)}$, where $k:=\dim U$, in the estimate.\end{rem}

 The combination of (\ref{ineq}) with (\ref{ineq_poin}) gives the famous Gagliardo-Nirenberg-Sobolev inequality:
 \begin{cor} If $M$ is connected then, given a nonempty open subset  $U$ of $\pad M$, for each $p$ there is a constant $C$ such that
 for all  $u\in W^{1,p}(M,U)$:
\begin{equation*}
   ||u||_{L^{p+\mu}(M)}\le C ||\pa u||_{L^p(M)},
\end{equation*}
where $\mu$ is provided by Theorem \ref{thm_embedding}.\end{cor}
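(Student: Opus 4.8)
The plan is to obtain the inequality as a direct combination of the interpolation estimate of Theorem \ref{thm_embedding} (applied with $q=p+\mu$) and the generalized Poincaré inequality of Proposition \ref{pro_gen_poinc1}. Fix $p\in[1,\infty)$ and take $u\in W^{1,p}(M,U)$, i.e. $\tra_{\pa M}u\equiv 0$ on $U$. First I would invoke Theorem \ref{thm_embedding} with $q:=p+\mu$, which produces an exponent $\alpha:=\alpha_{p,p+\mu}\in[0,1)$ and a constant $C_1$ such that
\[
||u||_{L^{p+\mu}(M)}\le C_1\,||u||_{L^p(M)}^{1-\alpha}\cdot ||u||_{W^{1,p}(M)}^{\alpha}.
\]

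Next, since $M$ is connected and $U$ is a nonempty open subset of $\pa M$, I would apply Proposition \ref{pro_gen_poinc1}: there is a constant $C_2$ with $||u||_{L^p(M)}\le C_2||\pa u||_{L^p(M)}+C_2||\tra_{\pa M}u||_{L^p(U)}$, and the last term vanishes by hypothesis, so $||u||_{L^p(M)}\le C_2||\pa u||_{L^p(M)}$. Hence $||u||_{W^{1,p}(M)}=||u||_{L^p(M)}+||\pa u||_{L^p(M)}\le (C_2+1)||\pa u||_{L^p(M)}$. Plugging both bounds into the displayed interpolation inequality and using that the exponents $1-\alpha$ and $\alpha$ add to $1$, I would get
\[
||u||_{L^{p+\mu}(M)}\le C_1\big(C_2||\pa u||_{L^p(M)}\big)^{1-\alpha}\big((C_2+1)||\pa u||_{L^p(M)}\big)^{\alpha}=C\,||\pa u||_{L^p(M)},
\]
which is the asserted estimate.

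I do not expect any genuine obstacle here: the corollary is a formal consequence of two results already established, all the analytic content residing in Theorem \ref{thm_embedding} (which itself rests on the local $L^p$ estimates of section \ref{sect_estimates_loc} and the Lipschitz conic structure) and in the compactness argument behind Proposition \ref{pro_gen_poinc1}. The only point deserving a word is that Proposition \ref{pro_gen_poinc1} requires $U$ to be open in $\pa M$ and $M$ to be connected, which are exactly the standing hypotheses of the corollary, and that the membership $u\in W^{1,p}(M,U)$ is precisely what kills the boundary term in (\ref{ineq_poin}).
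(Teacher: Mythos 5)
Your proof is correct and follows exactly the route the paper indicates (the paper merely says the corollary is ``the combination of (\ref{ineq}) with (\ref{ineq_poin})''); you have simply spelled out the details: Theorem~\ref{thm_embedding} with $q=p+\mu$, Proposition~\ref{pro_gen_poinc1} with the boundary term killed by the trace hypothesis, and the elementary observation that the exponents $1-\alpha$ and $\alpha$ sum to $1$.
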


In the same spirit, we have the following result, which can be proved with exactly the same proof as Proposition \ref{pro_gen_poinc1}: 
\begin{pro}
  Assume that $M$ is connected and let  $Z\subset M$ be of nonempty interior (in $M$). For every $p\in [1,\infty)$ there is a constant $C$ such that we have for each $u\in W^{1,p}(M)$:
 \begin{equation}\label{ineq_poin2}
  ||u||_{L^p(M)}\le C||u||_{L^p(Z)}+ C ||\pa u||_{L^p(M)}.
 \end{equation}
\end{pro}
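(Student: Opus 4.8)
The plan is to argue exactly as for Proposition \ref{pro_gen_poinc1}, i.e. by a compactness-and-contradiction (Rellich type) argument, the key input being the compactness of the embedding $W^{1,p}(M)\hookrightarrow L^p(M)$ provided by Theorem \ref{thm_embedding} (applied with $q=p$).

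Fix $p\in [1,\infty)$ and suppose (\ref{ineq_poin2}) fails. Then for every $i\in \N$ one can pick $u_i\in W^{1,p}(M)$ with $||u_i||_{L^p(M)}=1$ and $||u_i||_{L^p(Z)}+||\pa u_i||_{L^p(M)}<\tfrac1i$; in particular $||u_i||_{L^p(Z)}\to 0$, $||\pa u_i||_{L^p(M)}\to 0$, and $(u_i)$ is bounded in $W^{1,p}(M)$. By the compactness statement of Theorem \ref{thm_embedding} (with $q=p$), after passing to a subsequence $u_i\to u_\infty$ in $L^p(M)$ for some $u_\infty\in L^p(M)$. Since moreover $\pa u_i\to 0$ in the $L^p$ norm, the sequence $(u_i)$ is in fact Cauchy in $W^{1,p}(M)$, so $u_i\to u_\infty$ in $W^{1,p}(M)$ and $\pa u_\infty\equiv 0$ (equivalently, testing the identity relating $u_i$ and $\pa u_i$ against functions in $\cc_0^\infty(M)$ and letting $i\to\infty$ shows that the distributional gradient of $u_\infty$ vanishes). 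As $M$ is connected, this forces $u_\infty$ to be constant almost everywhere on $M$.

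It then remains to identify the constant. On the one hand $||u_\infty||_{L^p(M)}=\lim_i||u_i||_{L^p(M)}=1$, so $u_\infty$ is a nonzero constant (consistent, since $M$ is bounded and hence of finite measure). On the other hand, $u_i\to u_\infty$ in $L^p(M)$ entails $u_i\to u_\infty$ in $L^p(Z)$, so $||u_\infty||_{L^p(Z)}=\lim_i||u_i||_{L^p(Z)}=0$; since $Z$ has nonempty interior in $M$, a constant vanishing on $Z$ must be zero, contradicting $||u_\infty||_{L^p(M)}=1$. This contradiction establishes (\ref{ineq_poin2}). The proof is essentially mechanical once Theorem \ref{thm_embedding} is available; the only point deserving a word of care is the step ``$\pa u_\infty\equiv 0$ and $M$ connected $\Rightarrow$ $u_\infty$ constant'', which is the standard fact for connected manifolds, so no genuine obstacle arises here.
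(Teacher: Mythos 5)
Your proof is correct and is essentially the same compactness-and-contradiction argument the paper uses: the paper explicitly remarks that this proposition ``can be proved with exactly the same proof as Proposition \ref{pro_gen_poinc1}'', and you have reproduced that argument faithfully, replacing the vanishing of the trace on $U$ by the vanishing of $u_\infty$ on $Z$ (which, since $Z$ has nonempty interior and hence positive measure, forces the constant to be zero).
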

This leads us to the following theorem which is another famous feature of Lipschitz domains \cite{denylions, ma} that we extend to the category of bounded subanalytic manifolds. 
\begin{thm}\label{thm_derlp_implique_lp}
 Let $u$ be a distribution on $M$ and let $p\in [1,\infty)$. If $|\pa u|$ is $L^p$ then so is $u$.
\end{thm}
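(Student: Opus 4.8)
The plan is to reduce the statement to a local problem near the frontier and then apply the integral estimates of Section \ref{sect_estimates_loc}. Since $M$ is bounded and $|\pa u| \in L^p(M)$, the only obstruction to $u \in L^p(M)$ is integrability near $\delta M$; away from $\delta M$ we are on a relatively compact piece of a smooth manifold, where the classical local theory (a distribution with $L^p$ gradient is $L^p_{loc}$, see \cite[Ch. 1]{ma}) already gives what we want. So, using a partition of unity subordinate to a finite cover of $\mba$, it suffices to show that every $x_0 \in \delta M$ has a neighborhood $\uxo$ in $\R^n$ such that $u \in L^p(\uxo \cap M)$ whenever $\supp\, u \subset \uxo$, and we may take $x_0 = 0_{\R^n}$.

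First I would place ourselves in the setting of Section \ref{sect_lcs_mba}, choosing $\ep>0$ small enough for Theorem \ref{thm_local_conic_structure} to apply at the origin and for $N^\ep$ to be a $\cc^\infty$ manifold, and arrange (shrinking the neighborhood) that $\supp\, u \subset \bou(0,\ep)$. Then I would invoke Lemma \ref{lem_umeta} directly: for $u$ a distribution on $M$ with $\supp\, u \subset \bou(0,\ep)$ and $|\pa u| \in L^p(M)$, that lemma provides, for every $\eta \le \ep$, an estimate of $||u||_{L^p(M^\eta)}$ in terms of $||\pa u||_{L^p(M)}$ (with a logarithmic factor when $p=m$, and with an extra term $||\pa u||_{L^p(M^{\eta^{1/2}})}$ when $p<m$). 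In particular, taking $\eta = \ep$, we get $||u||_{L^p(M^\ep)} < \infty$. This is exactly the conclusion we need locally at $0$, since $M^\ep = \bou(0,\ep) \cap M$ contains $\supp\, u$. Note that Lemma \ref{lem_umeta} is stated precisely for distributions (not just $\cc^1$ functions), via the regularization argument recalled in its proof, so no extra approximation work is needed here.

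Assembling the pieces: cover $\mba$ by the interior region $\{x \in M : d(x,\delta M) > \eta_0\}$ (relatively compact in $M$, handled by the classical interior estimate) together with finitely many balls $\uxo[x_i]$ around points $x_i \in \delta M$ as above; take a subordinate partition of unity $\{\chi_i\}$ with $\chi_i \in \cc^\infty_0$; write $u = \sum_i \chi_i u$; observe that each $\chi_i u$ is again a distribution with $|\pa(\chi_i u)| \le |\chi_i||\pa u| + |u||\pa \chi_i| \in L^p$ once we know $u \in L^p_{loc}$ (which the interior estimate gives), apply the local bound to each piece, and sum. The main (and essentially only) obstacle is purely bookkeeping: making sure the cutoff by $\chi_i$ does not spoil the hypothesis $|\pa u| \in L^p$ needed to apply Lemma \ref{lem_umeta} — one must first record that $u$ is $L^p_{loc}$ on $M$ itself (classical), so that $|u||\pa\chi_i|$ is $L^p$, and then the frontier estimate closes the argument. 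There is no genuine difficulty beyond this, since all the hard analytic work is already packaged in Lemma \ref{lem_umeta}.
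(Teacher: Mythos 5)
Your reduction to a local statement near $\delta M$ and your identification of Lemma~\ref{lem_umeta} (or, more precisely, of the fiber estimate~(\ref{eq_link_forallpm})) as the key analytic input are both correct, but the partition-of-unity step that you call ``purely bookkeeping'' is in fact circular. To apply Lemma~\ref{lem_umeta} to $\chi_i u$ you need $|\pa(\chi_i u)|\in L^p(M)$, hence $u\,\pa\chi_i\in L^p(M)$. But $\pa\chi_i$ is supported in an annulus around $x_i\in\delta M$, and as soon as $\dim\delta M\ge 1$ that annulus necessarily meets $\delta M$ again. Knowing that $u\in L^p_{loc}(M)$ gives nothing there, because $L^p_{loc}$ only controls $u$ on compact subsets of $M$; integrability near $\delta M\cap\supp(\pa\chi_i)$ is exactly what you are trying to prove. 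No clever choice of cover removes this problem, since any cutoff that is identically $1$ near $x_i$ must transition across a set touching $\delta M$.

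The paper closes this loop with two ideas you don't use. First, it runs a \emph{decreasing induction on the dimension of the strata of a bi-Lipschitz trivial stratification of $\mba$} that meet the support of $u$ (statement $\mathbf{(H_k)}$): at a point $x_0$ in a stratum $S$ of minimal dimension $k$, the cutoff $\phi$ is built in the \emph{fiber} coordinate of a trivialization $\Lambda: U_0\to F^\ep\times W_0$, i.e.\ $\phi(x,y)=\psi(|x|^2/\ep^2)$. Thus $\phi\equiv 1$ on a full tubular neighborhood $Z$ of $S$, so $\pa\phi$ vanishes on $S$ and is supported in a region meeting only strata of dimension $>k$; there the induction hypothesis $\mathbf{(H_{k+1})}$ supplies $u\in L^p$, so $u\,\pa\phi\in L^p$. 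Second, the estimate~(\ref{eq_link_forallpm}) is applied to the transverse slice $F^\ep$ (of dimension $m-k$) for each $y\in W_0$, with a constant uniform in $y$, and then integrated over $y$; it is not applied directly to $M^\ep$. Both the stratified induction and the fiber-adapted cutoff are essential and missing from your argument, so the proposal as written does not establish the theorem.
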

\begin{proof} We can assume that $M$ is a connected set. Note that such a distribution $u$ is an $L^p_{loc}$ function \cite[Ch. 1]{ma} or \cite{denylions}.  Choose a bi-Lipschitz trivial stratification $\Sigma$ of $\mba$ of which $M$ is a stratum (see Remark \ref{rem_pi_S} (\ref{item_whitney})), and denote by $\Sigma_k$ the union of all the strata of dimension less than or equal to $k$, with by convention $\Sigma_{-1}=\emptyset$.

The idea is one more time to argue by induction on the dimension of the strata that touch the domain of the considered function. 
  Set for this purpose, given $A\subset \mba$,
$$\kappa(A):=\max \{k: \Sigma_{k-1}\cap A =\emptyset \}. $$
We shall show the following statements by decreasing induction on $k$:

\noindent $\mathbf{(H_k)}$. Let $A$ be a closed subset of $\mba$ satisfying $\kappa(A)\le k$. 
 If a distribution $u$ on $\oca\cap M$ satisfies  $\pa u \in  L^{p}(\oca\cap M)^n$, where $\oca$ is some open neighborhood of $A$ in $\mba$,  then $u\in L^p(\oca'\cap M)$, for some  open neighborhood $\oca'$ of $A$ in $\mba$.

  If $\kappa(A)=m$ then  $ A$  does not meet $\delta M$,  and, since such a function $u$ is $L^p_{loc}$, it must be $L^p$ on some neighborhood of $A$.
 Let us thus fix $k<m$, and, assuming  $\mathbf{(H_{k+1})}$, take $u$ and $A$ like in the statement of $\mathbf{(H_k)}$.

As $ A$ is compact, it suffices to prove that every point of $A$ has a neighborhood $U$ in $\mba$ such that $u\in L^p (U\cap M)$. 
Take any $x_0\in A$ and let  $S\in \Sigma$ denote the stratum containing this point. Observe that $\dim S\ge k$, and, if $\dim S>k$ then the result comes down from  $\mathbf{(H_{k+1})}$. We thus can assume $\dim S=k$ and will work near $x_0=\orn$.

Let $U_0$, $W_0$, and $\Lambda$ be as defined just before (\ref{eq_FV}) and let,  for $\eta>0$ small, $F^\eta$ and $V^\eta$ be as in (\ref{eq_FV}).  As $\Lambda$ is bi-Lipschitz, we can work as if $u$ were defined on $\Lambda(U_0\cap M)$.

    Let now $\psi:\R \to [0,1]$  be a $\cc^\infty$ function  
such that $\psi\equiv 1$ on $(-\infty,\frac{1}{4})$ and $\psi\equiv 0$ on $[\frac{1}{2},+\infty)$, and set for $(x,y)\in  \bou(0,\ep)\times  W_0$, \begin{equation*}\phi(x,y):=\psi\left(\frac{|x|^2}{\ep^2}\right).\end{equation*}

It suffices to show that both $u\phi$ and $(1-\phi)u$ are $L^p$ in the vicinity of $\orn$.  As $(1-\phi) \equiv 0$ on 
$$ Z:=  \bou(0,\epd)\times  W_0\,,$$
and since $\kappa(\,\adh{A\cap V^\ep\setminus Z}\,)>k$, the assumption $\mathbf{(H_{k+1})}$ entails that $(1-\phi) u$ is $L^p$ on $V^\ep$. 
To establish that so is $\phi u$,
observe that for each $y\in W_0$  the function $$v_y(x):=\phi(x,y) u(x,y), \qquad x\in F^\ep ,$$ satisfies  $|\pa v_y|\in  L^{p}(F^\ep)$ and vanishes for  $|x|$ close to $\ep$. As a matter of fact, by
 (\ref{eq_link_forallpm}),  for each $y\in W_0$, $v_{y}$ is $L^p$ and there is a constant $C$ independent of $y$ such that:
$$||v_{y}||_{L^p(F^\ep)}  \overset{ (\ref{eq_link_forallpm})}\le C ||\pa v_{y}||_{L^p(F^\ep)} \le C||\pa (\phi u)||_{L^p(F_y)}, $$
where $F_y:=F^\ep\times\{y\}$.
Raising it to the power $p$ and integrating with respect to $y$ we get:
$$||u||_{L^p(V^\ep)}\le C||\pa (\phi u)||_{L^p(V^\ep)}\le C|| u\pa \phi||_{L^p(V^\ep)}+C||\phi \pa u||_{L^p(V^\ep)}.$$
By assumption, $\phi |\pa u|$ is $L^p$, and since we have $\pa \phi\equiv 0$ on $Z$ as well as  $\kappa(\adh{A\cap V^\ep\setminus Z})>k$,  the induction hypothesis yields that so is $u\pa_i \phi$ on $V^\ep$, for all $1\le i\le n$. 
\end{proof}

\end{section}

\begin{section}{Morrey's embedding}\label{sect_morrey}
We recall that a function $u$ is said to be  {\bf H\"older continuous (with exponent $\alpha>0$)} on a set $P\subset \R^n$  when there is a constant $C$  such that for all $x$ and $y$ in $P$
\begin{equation}\label{eq_holder_outer} |u(x)-u(y)|\le C |x-y|^\alpha.\end{equation}

            Given $x$ and  $y$ in the same connected component of a definable set $A$,  let:$$d_{A}(x,y):=\inf \{ lg(\gamma):\gamma:[0,1]\to A, \mbox{ $\cc^0$ definable arc joining } x\mbox{ and } y\},$$
  where $lg(\gamma)$ stands for the length of the definable arc $\gamma$ (which is piecewise $\cc^1$).
  It will be convenient to extend this function by setting $d_A(x,y)=1$ if $x$ and $y$ lie in different connected components.  This defines a metric on $A$, generally referred as {\bf the inner metric} of $A$.

A function $u$ on $A$ is {\bf H\"older continuous with respect to the inner metric} (with exponent $\alpha>0$) if there is  a constant $C$  such that for all $x$ and $y$ in $A$
$$ |u(x)-u(y)|\le C\; d_A(x,y)^\alpha.$$
When $\alpha=1$,  $u$ will be said to be {\bf Lipschitz with respect to the inner metric}. 

 To avoid any confusion, a function which is H\"older continuous in the sense of (\ref{eq_holder_outer}) will sometimes be said to be H\"older continuous  with respect {\it to the  outer metric}. We denote by $\cc^{0,\alpha}(A)$ (resp. $\check{\cc}^{0,\alpha}(A)$) the space of the functions on $A$ that are H\"older continuous  with respect to the outer (resp. inner) metric with exponent $\alpha$.  It is worthy of notice that each element of $W^{1,\infty}(M)$ coincides almost everywhere with an element of $\check{\cc}^{0,1}(M)$.

\begin{rem}\label{rem_inout}
Of course, if a function is H\"older continuous (with respect to the outer metric), it is H\"older continuous with respect to the inner metric with the same exponent, i.e., $\cc^{0,\alpha}(M)\subset \check{\cc}^{0,\alpha}(M)$, for all $\alpha>0$.  On the other hand, it easily follows from \L ojasiewicz's inequality and some basic  Lipschitz properties of definable sets, that if  $M$ is normal (and bounded) then   $\check{\cc}^{0,\alpha}(M)\subset \cc^{0,\theta\alpha}(M)$, for some $\theta>0$ depending on $M$.
\end{rem}

Given $u\in \cc^{0,\alpha}(M)$ we set \begin{equation*}
                                      |u|_{\cc^{0,\alpha}(M)}:= \sup \big\{\frac{|u(x)-u(y)|}{|x-y|^\alpha}:x,y\in  M, x\ne y\big\},
                                     \end{equation*}
                                     as well as 
                                     \begin{equation*}
                                      ||u||_{\cc^{0,\alpha}(M)}:=||u||_{L^\infty(M)}+|u|_{\cc^{0,\alpha}(M)}.  
                                     \end{equation*}
It is well-known that $  ||\cdot ||_{\cc^{0,\alpha}(M)}$ is a norm that makes of $\cc^{0,\alpha}(M)$ a Banach space. The norm $  ||\cdot ||_{\check{\cc}^{0,\alpha}(M)}$  is defined analogously.

\begin{thm}\label{thm_holder}If $M$ is normal then
 there  is a positive real number $\alpha$ such that for all $p$  sufficiently large, the embedding $$(\cc^\infty(\mba),||\cdot||_{W^{1,p}(M)} )\hookrightarrow (\cc^{0,\alpha}(M),||\cdot||_{\cc^{0,\alpha}(M)})$$ is continuous and consequently extends to a compact embedding $W^{1,p}(M)\hookrightarrow \cc^{0,\alpha}(M)$.
\end{thm}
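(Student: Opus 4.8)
The plan is to pass first to the inner metric, then to argue locally by a nested induction built on the Lipschitz Conic Structure. Since $M$ is normal, Remark \ref{rem_inout} gives $\theta_0>0$ with $\check{\cc}^{0,\beta}(M)\subset\cc^{0,\theta_0\beta}(M)$, with control of the norms (this inclusion being a manifestation of \L ojasiewicz's inequality, Proposition \ref{pro_lojasiewicz_inequality}). Hence it suffices to produce $\beta>0$ such that $\|u\|_{\check{\cc}^{0,\beta}(M)}\lesssim\|u\|_{W^{1,p}(M)}$ for all $u\in\cc^\infty(\mba)$ and all $p$ large, and then to set $\alpha:=\theta_0\beta$. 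Using a partition of unity subordinate to a cover of $\mba$ by small neighbourhoods, one reduces to the two local estimates $\|u\|_{L^\infty(U\cap M)}\lesssim\|u\|_{W^{1,p}(M)}$ and $|u(x)-u(x')|\lesssim d_M(x,x')^{\beta}\|u\|_{W^{1,p}(M)}$ for $x,x'\in U\cap M$, where $U$ is such a neighbourhood of a point $x_0\in\mba$ and $u\in\cc^\infty(\mba)$ is arbitrary — pairs at inner distance bounded below (by a Lebesgue number of the cover) being controlled directly by the $L^\infty$ bound. At an interior point this is the classical Morrey estimate read off in a bi-Lipschitz chart (valid for $p>m$), so one may assume $x_0\in\delta M$.

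I would prove the local estimate at $x_0$ by induction on $m=\dim M$, and, for fixed $m$, by decreasing induction on the dimension $k$ of the stratum through $x_0$ in a locally bi-Lipschitz trivial stratification $\Sigma$ of $\mba$ compatible with $\delta M$ (Theorem \ref{pro_existence_stratifications}) — in the spirit of the proofs of Theorems \ref{thm_dense_lprime} and \ref{thm_derlp_implique_lp}. Via the bi-Lipschitz trivialization of $\Sigma$ along that stratum $S$, which preserves inner-metric H\"older continuity and $W^{1,p}$ norms up to constants, a neighbourhood of $x_0=\orn$ is identified with $F^\ep\times W_0$ as in \eqref{eq_FV}, $F^\ep$ a normal bounded subanalytic $\cc^\infty$ manifold of dimension $m-k$ and $W_0$ a ball in $S$. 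The $W_0$-variable is handled fibrewise by the classical Morrey embedding; for the $F^\ep$-variable one applies the Lipschitz Conic Structure (Theorem \ref{thm_local_conic_structure}) at the vertex $\orn\in\delta F^\ep$, whose link $L^\ep$ is a bounded subanalytic $\cc^\infty$ manifold of dimension $m-k-1<m$: after replacing it if necessary by a $\cc^\infty$ normalization (Proposition \ref{pro_normal_existence}), which is bi-Lipschitz for the inner metrics, the induction hypothesis yields the estimate on $L^\ep$. Finally, as $M$ is normal, $\tra_{\{\orn\}}:W^{1,p}(M)\to\R$ is well-defined and bounded for $p$ large (Corollary \ref{cor_trace_bounded}); replacing $u$ by $w:=u-\tra_{\{\orn\}}u$ one may assume $w(\orn)=0$, so that $\|w\|_{W^{1,p}}\lesssim\|u\|_{W^{1,p}}$, the constant cancels in differences, and the large-$p$ estimates of Section \ref{sect_estimates_loc} (applied to the manifold $F^\ep$ at its vertex) are available for $w$.

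The core step combines the link estimate with the radial estimates. Transporting the link estimate from $L^\ep$ to the link $L^\eta$ at scale $\eta$ by the homeomorphism $r_{\eta/\ep}$, and using $(\ref{ln})$ together with $(\ref{theta_cont})$ and the bound $|\pa(w\circ r_s)|\lesssim s\,(|\pa w|\circ r_s)$ coming from Theorem \ref{thm_local_conic_structure}(i), one gets, for $p$ large,
\[
\|w\|_{L^\infty(L^\eta)}\ \lesssim\ \eta^{\,1-\frac{1+\nu}{p}}\,\|w\|_{W^{1,p}(F^\ep)}\qquad(\eta\le\ep),
\]
with $\nu$ as in $(\ref{ln})$ — the gradient contribution carrying a positive power of $\eta/\ep$, so that no control of $\|\pa w\|_{L^p(L^\eta)}$ at a single scale is needed. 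Taking the supremum over $\eta$ gives the $L^\infty$ bound, and more precisely $|w(y)|\lesssim|y|^{\beta_1}\|w\|_{W^{1,p}(F^\ep)}$ with $\beta_1:=1-(1+\nu)/p>0$. For the H\"older estimate, given $x,x'$ set $\rho:=\max(|x|,|x'|)$ and $\ell:=d_{F^\ep}(x,x')$: if $\ell\gtrsim\rho$ one bounds $|w(x)-w(x')|\le|w(x)|+|w(x')|\lesssim\rho^{\beta_1}\|w\|\lesssim\ell^{\beta_1}\|w\|$; if $\ell\ll\rho$, then $x$ and $x'$ lie at distances $\approx\eta:=\rho$ from $\orn$ and can be joined within $F^\eta$, and one pushes this configuration outward by $r_{\eta/\ep}^{-1}$, which by Theorem \ref{thm_local_conic_structure}(ii) and \L ojasiewicz's inequality (as in $(\ref{eq_jacr_s})$) is bi-Lipschitz with constant $\lesssim(\eta/\ep)^{-\lambda}$ for some $\lambda$, reducing the estimate to the link H\"older bound near scale $\ep$; together with the decaying factor $\eta^{\beta_1}$ this yields $|w(x)-w(x')|\lesssim\eta^{\,\beta_1-\lambda\beta_2}\ell^{\beta_2}\|w\|$ ($\beta_2$ the link exponent), which is $\lesssim\ell^{\beta_2}\|w\|$ provided $\beta_2\le\beta_1/\lambda$. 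Since $\beta_1\to1$ as $p\to\infty$ while $\beta_2$ and $\lambda$ stay fixed, shrinking the H\"older exponent if necessary one gets a $\beta>0$ working for all $p$ large, independently of $p$. Compactness then follows as usual: the continuity just proved holds with an exponent slightly larger than the stated $\alpha$ (decrease $p$ a little), so a bounded subset of $W^{1,p}(M)$ is bounded in $\cc^{0,\alpha'}(M)$ for some $\alpha'>\alpha$, hence precompact in $\cc^{0,\alpha}(M)$ by Arzel\`a--Ascoli on the compact set $\mba$ and the interpolation inequality for H\"older norms.

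The hard part is the core step: controlling, via \L ojasiewicz's inequality, how fast the Lipschitz and H\"older constants attached to the conic structure ($r_s$, $r_s^{-1}$, and $H,H^{-1}$ away from the vertex) degenerate as the scale tends to $0$, and checking that the finitely many exponents accumulated along the induction can be made simultaneously positive for $p$ large. A subsidiary difficulty is that the radial direction cannot be treated by pointwise integration of $|\pa w|$ along the trajectories of $r$ (only $L^p$ control being available), which is precisely why the scaled estimates $(\ref{theta_cont})$, $(\ref{theta_contm})$ and $(\ref{eq_u_circ_r_sm})$ of Section \ref{sect_estimates_loc}, with their gain of a positive power of the scale, are indispensable.
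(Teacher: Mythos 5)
Your overall architecture — cut-off to a small neighbourhood, a decreasing induction on the dimension of the stratum through $x_0$ in a locally bi-Lipschitz trivial stratification, use of the Lipschitz conic structure and of the scaled estimates of Section \ref{sect_estimates_loc} to propagate the desired estimate outward from small to large scale, and finally subtracting $u(\orn)$ via the trace — matches the paper's strategy at a high level. The paper, however, never performs an induction on $m=\dim M$ and never invokes a Morrey estimate on the link $N^\ep=\sph(0,\ep)\cap M$: its inductive hypothesis $(\mathbf A_{k+1})$ is always applied to \emph{full-dimensional} annular sets, namely $P^{\eta}=M^{\eta}\setminus M^{\eta/2}$ in Step~1 and $Z^{\eta}=V^{\eta}_0\setminus\overline{V^{\eta/2}_0}$ in Step~3. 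On such sets the $W^{1,p}$ norm of $u\circ r_s$ (resp. $u\circ\tir_s$) is controlled by (\ref{eq_u_circ_r_sm}) (resp. (\ref{eq_utir_wp})), both of which compare $W^{1,p}$ norms on $m$-dimensional pieces.

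This is exactly where your proposal has a genuine gap. You want the $m$-induction to give a Morrey-type estimate on the link $L^\ep=N^\ep$ of dimension $m-k-1$, and then to transport it by $r_s$ to the links $L^{s\ep}$. To use the Morrey estimate on $N^\ep$ for $w\circ r_s$ you must control $\|w\circ r_s\|_{W^{1,p}(N^\ep)}$, and in particular the tangential derivative term. Your ingredients give, via (\ref{ln}) and the $Cs$-Lipschitz bound for $r_s$,
\[
\|\pa(w\circ r_s)\|_{L^p(N^\ep)}\ \lesssim\ s^{\,1-\nu/p}\,\|\pa w\|_{L^p(N^{s\ep})},
\]
and at this point you are stuck: $\|\pa w\|_{L^p(N^{s\ep})}$ is the $L^p$ norm of $|\pa w|$ on a single sphere-slice and it is \emph{not} bounded by $\|\pa w\|_{L^p(M^\ep)}$, even up to a uniform constant and even for almost every $s$. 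The coarea inequality (\ref{eq_coarea_sph}) only yields $\int_0^{\ep}\|\pa w\|^p_{L^p(N^\zeta)}\,d\zeta\lesssim\|\pa w\|^p_{L^p(M^\ep)}$, an averaged control; the slice norms can be arbitrarily large on a set of small positive measure of $\zeta$. Estimate (\ref{theta_cont}) has no analogue for $|\pa w|$, since it crucially uses that $w$ itself has a derivative. Hence your claimed uniform-in-$\eta$ inequality $\|w\|_{L^\infty(L^\eta)}\lesssim\eta^{1-(1+\nu)/p}\|w\|_{W^{1,p}(F^\ep)}$ does not follow from the tools you cite, and the subsequent radial propagation collapses with it. The paper sidesteps this entirely by working with annular regions of full dimension, for which $\|u\|_{W^{1,p}(P^\eta)}\le\|u\|_{W^{1,p}(M^\ep)}$ trivially.

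Two lesser issues. First, the reduction to the inner metric via Remark~\ref{rem_inout} requires a quantitative form $\|\cdot\|_{\cc^{0,\theta\alpha}(M)}\lesssim\|\cdot\|_{\check\cc^{0,\alpha}(M)}$ which the remark does not state (though it is true for normal bounded $M$ by Łojasiewicz); it is also an unnecessary detour, since all the local estimates are outer-metric estimates anyway and the paper proves the outer statement directly. Second, "handling the $W_0$-variable fibrewise by the classical Morrey embedding" does not by itself give the estimate for $|u(x)-u(y)|$ when $x,y$ lie over different fibers: this cross-fiber case is the content of the paper's Step~2 (the Morrey integral over the lens $A_{x,y}$, the retraction $\tir_{x,s}$, and the volume lower bound (\ref{eq_vol_A_x})) together with Step~3, and it needs to be carried out, not merely asserted.
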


\noindent{\it Proof.} We shall show that the embedding is continuous. That it extends to $W^{1,p}(M)$ then follows from Theorem \ref{thm_trace} $(i)$ (for $p$ sufficiently large), and its compactness comes down from Ascoli-Arzela's Theorem.

Let $\Sigma$ be a locally bi-Lipschitz trivial stratification of $\mba$ compatible with $M$ of which $M$ is a stratum (see Remark \ref{rem_pi_S} (\ref{item_whitney})). Given $k \le m$, we denote by $\Sigma_k$ the union of the strata of dimension less than or equal to $k$, and,
given an open subset $U$ of $M$, we let $$\kappa (U):=\min \{ k: \Sigma_k\cap \adh{U} \ne \emptyset\}.$$

We are going to prove the following statements by downward induction on $k\in\{ 0,\dots, m\}$:

\noindent {\bf $(\mathbf{A}_k)$.} For every $U\subset M$ such that $\kappa (U)\ge k$, there are positive real numbers $p_0$ and $\alpha$ such that the linear mapping $\Psi_U:(\cc^\infty(\mba),||\cdot ||_{W^{1,p}(M)})\to \cc^{0,\alpha}(U)$, $u \mapsto u_{|U} $ is continuous for all $p>p_0$.

As $M$ is a Lipschitz manifold with boundary at each point of an $(m-1)$ dimensional stratum,   {\bf $(\mathbf{A}_{m-1})$} comes down from the classical version of Morrey's embedding.  The theorem follows from {\bf $(\mathbf{A}_{0})$}.
Assume the result true for some $(k+1)$, $k\in \{0,\dots, m-2\}$, and let us prove it for $k$. Fix for this purpose an open subset $U$ of $M$ satisfying $\kappa (U)=k$.

To summarize, we have to show, assuming {\bf $(\mathbf{A}_{k+1})$}, that we can find $\alpha>0$ such that we have for all $p$ sufficiently large and $u\in \cc^\infty(\mba)$:
\begin{equation}\label{eq_morrey_claim}
||u||_{\cc^{0,\alpha}(U)}\lesssim ||u||_{W^{1,p}(M)}. 
\end{equation}
Before starting the first step, let us make some observations that are consequences of the induction assumption.
 We already know by induction that inequality (\ref{eq_morrey_claim}) holds if we replace $U$ with any open subset $V$ of $M$ satisfying $\kappa (V)>k$, for each $u\in W^{1,p}(M)$ (assertion $(\mathbf{A}_{k+1})$ yields it for $u\in \cc^\infty(\mba)$ and consequently for any $u\in W^{1,p}(M)$, by density). Since we can use a cutoff function,  given such $V$,  we actually have
 \begin{equation}\label{eq_obs}||u||_{\cc^{0,\alpha}(V)}\lesssim ||u||_{W^{1,p}(V'\cap M)},\end{equation}                                                                                                                                                                                                                                                                                                                                                     for every $u\in W^{1,p}(V'\cap M)$ (for $p$ large enough), if $V'$ is any neighborhood of $\adh{V}$ in $\mba$. Of course, the constant involved in this inequality depends on the chosen neighborhood $V'$.

Since  $U$ is relatively compact and because we can use a partition of unity, it is enough to deal with the case of a function $u$ whose support fits in some prescribed neighborhood  $U_\xo$ of some point $x_0\in \overline{U}$ that we can choose arbitrarily small. Let us thus take any $\xo\in \adh{U}$ and let $S$ denote  the element of $\Sigma$ that contains $x_0$. We may assume $S$ to be of dimension $k$ (since, in the case $\dim S> k$, $x_0$ would have a neighborhood $\uxo$ such that $\kappa(\uxo)>k$, and on the other hand $\xo\in \adh{U}$ entails $\dim S \ge \kappa (U)=k$).


\bigskip
 
  We establish in step \ref{step_0} estimate (\ref{eq_morrey_claim}) in the case $k=0$ (which is the easiest case), $x_0=0_{\R^n}\in \delta M$ and $U=M^{\epd}$,
  the notations  $\ep$, $r_s$, $M^\eta$, and $N^\eta$ being as introduced in section \ref{sect_lcs_mba}.
  Note that, as $r_s$ is bi-Lipschitz for every $s\in (0,1)$, it follows from \L ojasiewicz's inequality that there are positive numbers $\gamma$ and $C$ such that for all $s\in (0,1]$ and all $x,y$ in $\mep$:
\begin{equation}\label{eq_r_lip}
 |x-y|\frac{s^{\gamma}}{C} \le |r_s(x)-r_s(y)| \le \frac{C}{s^\gamma} |x-y|. 
\end{equation}

\begin{step}\label{step_0} Assuming {\bf $(\mathbf{A}_{1})$}, we show that  there are positive numbers $\alpha$, $C$ and $p_0$, depending only on $\ep$ and $M$, such that for all $p\ge p_0$ and all $u\in \cc^\infty(\adh{\mep})$: 
\begin{equation*}
 ||u||_{\cc^{0,\alpha}(M^\frac{\ep}{4})} \le C||u||_{W^{1,p}(\mep)}.
\end{equation*}\end{step}

\begin{proof}[proof of step \ref{step_0}.] By Theorem \ref{thm_trace}, we have $|u(0)|\lesssim ||u||_{W^{1,p}(\mep)}$. Hence, since we can
subtract $u(0)$ if necessary, we see that it suffices to prove the claimed estimate for a function $u$ that vanishes at the origin (which will allow us to apply (\ref{eq_u_circ_r_sm}), see (\ref{eq_hr_0u}) below).

 For $\eta\le \ep$, let  $P^\eta:=M^{\eta}\setminus M^\frac{\eta}{2}$, and apply  {\bf $(\mathbf{A}_{1})$} to $P^\epd$. This provides  constants  $\tilde{p}_0$ and $\alphat$  such that for all $v\in W^{1,p}(\mep)$ (see  (\ref{eq_obs})), with $p> \tilde{p}_0$, we have 
\begin{equation}\label{eq_hr_0}
 ||v||_{\cc^{0,\alphat}(P^\epd)} \lesssim ||v||_{W^{1,p}(\mep)}.
\end{equation}
As $\mep$ is bounded, this inequality is still valid if we change $\alphat$ for a smaller value (for a possibly bigger constant). In particular, we may assume $\alphat<\frac{1}{2\gamma}\,$, where $\gamma$ is the exponent that appears in (\ref{eq_r_lip}).

Applying (\ref{eq_hr_0}) to $v=u\circ r_s$, $s\in (0,1]$, $u\in \cc^\infty(\adh{\mep})$ satisfying $u(0)=0$, $p$ large enough, by (\ref{eq_u_circ_r_sm}),   we immediately see that
\begin{equation}\label{eq_hr_0u}
  ||u\circ r_s||_{\cc^{0,\alphat}(P^\epd)} \lesssim s^{1-\frac{\nu}{p}}||u||_{W^{1,p}(M^{s\ep})},
\end{equation}
 which implies
 \begin{equation}\label{eq_linfty_0} 
                                                                                    ||u||_{L^\infty(P^{s\epd})}=||u\circ r_s||_{L^\infty(P^\epd)}\lesssim s^{1-\frac{\nu}{p}}   ||u||_{W^{1,p}(M^{s\ep})}.
                                                                                  \end{equation}
As the constant involved in this estimate is independent of $s$, we already see that for $p$ large enough, we have for $s\in (0,1]$ and $u\in \cc^\infty(\adh{\mep})$  vanishing at the origin \begin{equation}\label{eq_linfty_0_M}                                                                                                                                                                          ||u||_{L^\infty(M^{s\epd})}\lesssim s^{1-\frac{\nu}{p}}  ||u||_{W^{1,p}(\mep)}.
                            \end{equation}
        It therefore only remains to show that there are positive constants $C$, $p_0$, and $\alpha$ such that for all $p\ge p_0$, $u\in \cc^\infty(\adh{\mep})$ vanishing at the origin,  as well as  $x$ and $y$ in $M^\frac{\ep}{4}$, we have:
\begin{equation}\label{eq_claim_step1}
 |u(x)-u(y)|\le C  ||u||_{W^{1,p}(\mep)}\cdot |x-y|^\alpha.
\end{equation}
To see this, observe that (\ref{eq_hr_0u}) also implies that  for such $u$ we have for all $x$ and $y$ in $P^\epd$:
\begin{equation}
 |u(r_s(x))-u(r_s(y))|\lesssim s^{1-\frac{\nu}{p}} ||u||_{W^{1,p}(\mep)}\cdot |x-y|^\alphat,
\end{equation}
which, thanks  to (\ref{eq_r_lip}),  entails
\begin{equation}
 |u(r_s(x))-u(r_s(y))|\lesssim s^{1-\frac{\nu}{p}} ||u||_{W^{1,p}(\mep)}\cdot |r_s(x)-r_s(y)|^\alphat\cdot s^{-\alphat\gamma}.
\end{equation}
As $\alphat\gamma<\frac{1}{2}$, we see that if $p$ is sufficiently large then we have for $s\in (0,1]$ and $u\in \cc^\infty(\adh{\mep})$ vanishing at the origin: \begin{equation}\label{eq_psep}
 |u|_{\cc^{0,\alphat}(P^{s\epd})}\lesssim  ||u||_{W^{1,p}(\mep)}.
\end{equation}

To show that this implies (\ref{eq_claim_step1}), take $x$ and $y$ in $M^\frac{\ep}{4}$, and set $s:=\frac{3|x|}{\ep}$, in order to have $x\in P^{s\epd}$.  It is no loss of generality to assume $|x|\ge |y|$. If $y \in P^{s\epd}$, by (\ref{eq_psep}), the desired inequality holds (with $\alpha=\alphat$ in this case) for the couple $(x,y)$. Otherwise, $y\in M^{\frac{s\ep}{4}}=M^{\frac{3|x|}{4}}$, which entails  $|x-y|>\frac{|x|}{4}=\frac{s\ep}{12}$, which implies in turn for $p$ sufficiently large
\begin{equation*}
|u(x)-u(y)|\le |u(x)|+|u(y)|\overset{(\ref{eq_linfty_0_M})}{\lesssim}  s^{\frac{1}{2}}\cdot ||u||_{W^{1,p}(\mep)}  \lesssim |x-y|^{\frac{1}{2}}\cdot ||u||_{W^{1,p}(\mep)},
\end{equation*}
yielding the claimed fact (here, we have applied  (\ref{eq_linfty_0_M}) twice, once to get a bound for $|u(x)|\le  ||u||_{L^\infty(M^{s\epd})}$ and once to get a bound for  $|u(y)|\le  ||u||_{L^\infty(M^{s\epd})}$), completing the proof of step $1$.
\end{proof}

\bigskip

By definition of bi-Lipschitz triviality, the point $\xo$ that we have fixed has a neighborhood $U_\xo$ in $\mba$, for which there is a bi-Lipschitz homeomorphism $\Lambda: U_\xo \to (\pi^{-1}(0_{\R^n})\cap \adh{M})\times W_\xo$, where $\pi:U_\xo \to S$ is a $\cc^\infty$ definable (see Remark \ref{rem_pi_S} (\ref{item_retractions}))  retraction and $W_\xo$ is a neighborhood of $x_0$ in $S$.

 The next following steps are devoted to the proof of the induction step in the case $k>0$.      
      The idea is to split the proof of (\ref{eq_morrey_claim}) for $k>0$ into two steps: we first establish the H\"older condition for the restriction of $u$ to the stratum $S$ (step \ref{step_stra} below) and then prove it for a couple of points $(x,y)$ for which $\pi(x)$ and $\pi(y)$ are close to each other (step \ref{step_v}). Step \ref{step_ko} will then derive (\ref{eq_morrey_claim}) from these two steps.  
We therefore from now assume $k>0$.  Before proceeding, we introduce some notations.

We can assume that $F:=\pi^{-1}(0_{\R^n}) \cap M $ is a $\cc^\infty$ manifold (see Remark \ref{rem_pi_S} (\ref{item_whitney})).
  Set for simplicity for $\eta\le \ep$:
\begin{equation*}
F^\eta:=\pi^{-1}(0_{\R^n}) \cap M^\eta  \et U_0^\eta:=F^\eta \times \bou(0_{\R^k},\eta). 
\end{equation*}
Since  we can work up to a bi-Lipschitz homeomorphism, we will identify $U_\xo$ with $U_0^\ep$, $\xo$ with the origin, and $S\cap U_\xo$ with $W_0:=\{0_{\R^{n-k}}\}\times \bou(0_{\R^k},\ep)$.

 Observe that $\pi$ is then induced by the canonical projection onto $\{0_{\R^{n-k}}\}\times \R^k$. 
We shall  sometimes regard $W_0$ as a subset of $\R^k$, identifying $\bou(0_{\R^k},\ep)$ with $\{0_{\R^{n-k}}\}\times \bou(0_{\R^k},\ep)$ (and $x\in\bou(0_{\R^k},\ep) $ with $(0_{\R^{n-k}},x)\in W_0$). 

Apply now Theorem \ref{thm_local_conic_structure} to the germ of $F$ at the origin. This provides a Lipschitz definable mapping $r:[0,1]\times F^\ep\to F^\ep$ such that for every $s\in (0,1]$, $r_s(F^\ep)=F^{s\ep}$,
%
 that enables us define a retraction by deformation of $U_0^\ep$ onto the origin: \begin{equation}\label{eq_tir}\tir:[0,1]\times U_0^\ep\to U_0^\ep, \qquad (s,z)\mapsto \tir_{s}(z):= \big(r_s(z-\pi(z)),s\pi(z)\big).\end{equation}
                                                                                                                                                                                                                                                                                                                                                                                                            
 More generally, given $x$ in the set \begin{equation}\label{eq_wprime_0}
                                W_0':=\{0_{\R^{n-k}}\}\times \bou(0_{\R^k},\epq) 
                              \end{equation}
and $z\in U_0^\ep$ such that $(z-x)\in U_0^\ep$, we set \begin{eqnarray*}
                                             \tir_{x,s}(z):=\tir_s(z-x)+x,
                                            \end{eqnarray*}
in order to get a retraction by deformation onto $\{x\}$.
 
Observe that the second component of this mapping is just a homothetic transformation of $\R^k$ of ratio $s$ centered at $x$. Hence, by (\ref{eq_der_r_s}), we see that there is a constant $C$ such that 
for all $s\in (0,1)$ we have for almost all  $z\in \bou(x,\epd) \cap U_0^\ep$:  \begin{equation}\label{eq_der_r_s_tilda}
       \left|\frac{\pa \tir_x}{\pa s}(s,z)\right|\le C|z-x|.
      \end{equation}
      As $\tir_{x,s}$ is merely $\tir_{s}$ up to a translation, we can require the constant involved in the above estimate to be independent of $x\in W_0'$.  Moreover,  thanks to \L ojasiewicz's inequality, there are positive real numbers $c$ and $\nu$ which are independent of $(x,s)\in W_0'\times (0,1)$ and such that on $\bou(x,\epd) \cap U_0^\ep$ we have:
      \begin{equation}\label{eq_jac_tir}
       \jac \tir_{x,s} \ge c s^\nu.
      \end{equation}


\begin{step}\label{step_stra}  There exist positive real numbers $C$ and $\mu$, depending only on $U_0^\ep$, such that for all $p$ sufficiently large,  we have for all\footnote{We only have to show (\ref{eq_morrey_claim}) for a function $u$ which is smooth up to the boundary but since we argue up to a bi-Lipschitz trivialization of the stratification we will establish it for $u\in \cc^{0,1}(\adh {U_0^\ep})$.} $u\in \cc^{0,1}(\adh {U_0^\ep})$: \begin{equation}\label{eq_step_S}
                |u|_{\cc^{0,1-\frac{\mu}{p}} (W'_0)}\le C || u||_{W^{1,p}(U_0^\ep)}.
                       \end{equation}\end{step}

                 \begin{proof}[proof of step \ref{step_stra}.]   Since strata are smooth, we will here follow an argument used to prove Morrey's inequality on smoothly bounded domains of $\R^n$.    For $x$ and $y$ in $W'_0=\{0_{\R^{n-k}}\}\times \bou(0_{\R^k},\epq)$,  let   $$A_{x,y}:=M\cap \bou(x,\eta)\cap \bou(y,\eta), \;\; \mbox{where }\;\;\; \eta:=|x-y|.$$ 
We first prove  that there are positive real numbers $C$ and $\mu$ such that for all $x$ and $y$ in $W'_0$ we have: \begin{equation}\label{eq_vol_A_x}
         |x-y|^\mu \le C\, \hn^m(A_{x,y}).
        \end{equation}
        To prove this fact, observe that as obviously $$F^\frac{\eta}{4} \times \left(W_0\cap \bou \big(\frac{x+y}{2},\frac{\eta}{4}\big)\right) \subset A_{x,y}$$  (where as above $\eta=|x-y|$), there must be $C>0$  such that for all $x$ and $y$ in $W_0'$ we have \begin{equation}\label{eq_vol_axp}
              \eta^k\cdot \hn^{m-k}(F^\frac{\eta}{4})        \le  C\, \hn^m(A_{x,y}) .
                                  \end{equation}
 It is well-known that (see \cite{lr} or \cite[section $5.2$]{livre}), as $F$ is definable, there is a constant (also denoted $C$) and a real number $l$ such that $\zeta^{l}\le C\hn^{m-k}(F^\zeta) $, for all $\zeta>0$, which means that  (\ref{eq_vol_axp}) yields (\ref{eq_vol_A_x}).
 
 To show (\ref{eq_step_S}), observe now that if $x$ and $y$ belong to $W_0'$, then, by the fundamental theorem of calculus (it is enough to establish (\ref{eq_step_S}) for a function $u\in \cc^\infty(\adh{U_0^\ep})$), we have:
\begin{eqnarray}\label{eq_uxz}\nonumber
 \int_{A_{x,y}}|u(z)-u(x)|\,dz&\le&  \int_{A_{x,y}}\int_0 ^1  \left| \pa u(\tir_{x,s}(z))\cdot \frac{\pa \tir_{x}}{\pa s}(s,z)\right| ds\, dz\\ \nonumber& \overset{(\ref{eq_der_r_s_tilda})}{ \lesssim} &\int_{A_{x,y}}\int_0 ^1 |\pa u(\tir_{x,s}(z))|\cdot |z-x|\, ds\, dz\\ &\le& |x-y|\int_{A_{x,y}}\int_0^1|\pa u(\tir_{x,s}(z))|\,ds\, dz,\end{eqnarray}
 since $|z-x|<|x-y|$, for all $z\in A_{x,y}$. Notice now that if we write
$$ |u(x)-u(y)|\le |u(z)-u(x)|+|u(z)-u(y)|, $$
and then integrate this inequality with respect to $z\in A_{x,y}$, we get
\begin{eqnarray}\label{eq_diff}
\nonumber&& \hn^m(A_{x,y})\,|u(x)-u(y)|= \int_{A_{x,y}}|u(z)-u(x)|dz\; +\;\int_{A_{x,y}}|u(z)-u(y)|dz \\
&&\overset{(\ref{eq_uxz})}{\lesssim} |x-y| \int_{A_{x,y}}\int_0 ^1|\pa u(\tir_{x,s}(z))|dsdz+|x-y| \int_{A_{x,y}}\int_0 ^1|\pa u(\tir_{y,s}(z))|dsdz.\end{eqnarray}
We now can estimate the integrals that appear in (\ref{eq_diff}) as follows:                                                                                                                               
\begin{eqnarray*}
  \int_{A_{x,y}}\int_0 ^1|\pa u(\tir_{x,s}(z))|ds\,dz\overset{(\ref{eq_jac_tir})}{\lesssim} \int_{A_{x,y}}\int_0 ^1|\pa u(\tir_{x,s}(z))|\cdot \jac \tir_{x,s} (z)^{1/p}\cdot  s^{-\nu/p}ds\,dz\end{eqnarray*}
 \begin{eqnarray*}
 &\le& \left( \int_{A_{x,y}}\int_0 ^1|\pa u(\tir_{x,s}(z))|^p\cdot \jac \tir_{x,s} (z)ds\,dz\right)^{1/p}  \cdot     \left(  \int_{A_{x,y}}\int_0 ^1  s^{-\nu p'/p}ds\, dz\right)^{1/p'}\\
 &=&\left(\int_0 ^1 \int_{\tir_{x,s}(A_{x,y})}|\pa u(z)|^p dz \,ds\right)^{1/p}  \cdot     \left(\int_{A_{x,y}} \int_0 ^1   s^{-\nu /(p-1)}ds dz\right)^{1-1/p}\\
  &\le&\left(\int_0 ^1||\pa u||_{L^p(U_0^\ep)}^p ds\right)^{1/p}  \cdot     \left(\int_{A_{x,y}} \int_0 ^1   s^{-\nu /(p-1)}ds\,dz\right)^{1-1/p}\\
    &\lesssim& ||\pa u||_{L^p(U_0^\ep)}  \cdot     \hn^m(A_{x,y})^{1-1/p}\quad \mbox{(for $p\ge\nu+2$).}
\end{eqnarray*}
By the symmetry of our assumptions, this computation is valid to estimate the integral of $ |\pa u(\tir_{y,s}(z))|$ as well,  so that, plugging the two  inequalities resulting from this computation  into (\ref{eq_diff}), we get   
\begin{eqnarray*}
 |u(x)-u(y)|\lesssim |x-y|  \cdot    \hn^m(A_{x,y})^{-1/p} \cdot||\pa u||_{L^p(U_0^\ep)} \overset{(\ref{eq_vol_A_x})}{\lesssim}|x-y|^{1-\frac{\mu}{p}} \cdot ||\pa u||_{L^p(U_0^\ep)} ,
 \end{eqnarray*}
 yielding the claimed estimate and completing step \ref{step_stra}.\end{proof}

 We are still using the notations introduced before step \ref{step_stra}.   
   Step \ref{step_v} will provide an estimate of the norm of the restriction of a function  $u\in \cc^{0,1}(\adh{U_0^{\ep}})$ to a neighborhood of a fiber $\pi^{-1}(a)$, $a\in W'_0$  (with a constant independent of $a$),  proceeding in a similar way as in the case $k=0$ (step \ref{step_0}).  
    Its proof will require a generalization of (\ref{eq_u_circ_r_sm}) to $\tir_s$ (see (\ref{eq_tir}) for $\tir_s$) that we present now (inequality (\ref{eq_utir_wp}) below). 
    
    For $u\in L^p(U_0^\epd)$ and $s\in (0,1]$, we have:
\begin{equation*}\label{eq_u_tir}  ||u\circ \tir_s||_{L^p(U_0^\frac{\ep}{4})}\overset{(\ref{eq_jac_tir})}\lesssim s^{-\frac{\nu}{p}}\cdot||u||_{L^p(\tir_s(U_0^\frac{\ep}{4}))}\le s^{-\frac{\nu}{p}}\cdot ||u||_{L^p(M^{s\epd})} ,\end{equation*}
    since $U_0^\frac{\ep}{4}\subset M^\epd$.  In particular, in the case where $u$ belongs to $\cc^{0,1}\big(\;\adh{U_0^{\ep/2}}\;\big)$ and satisfies $u(0)=0$, by (\ref{theta_contm}), we get for $s\in (0,1]$ (for $p$ large):
    \begin{equation}\label{eq_utir_lp}
      ||u\circ \tir_s||_{L^p(U_0^\frac{\ep}{4})}\lesssim s^{1-\frac{\nu}{p}} \cdot ||u||_{W^{1,p}(M^{\epd})} .
    \end{equation}
Furthermore, for such $u$ and $s$
$$||\pa (u\circ \tir_s)||_{L^p(U_0^\frac{\ep}{4})}= || ^{\mathbf{t}}D\tir_s\left(\pa u\circ \tir_s\right)||_{L^p(U_0^\frac{\ep}{4})}\lesssim s  || \pa u\circ \tir_s||_{L^p(U_0^\frac{\ep}{4})}\overset{(\ref{eq_jac_tir})}\lesssim s^{1-\frac{\nu}{p}}  || \pa u||_{L^p(M^{\epd})}$$
(again, since $U_0^\frac{\ep}{4}\subset M^\epd$).  Combining this estimate with (\ref{eq_utir_lp}), we conclude that for $u\in \cc^{0,1}(\;\adh{U_0^{\ep/2}}\;)$ vanishing at the origin, we have:
 \begin{equation}\label{eq_utir_wp}
      ||u\circ \tir_s||_{W^{1,p}(U_0^\frac{\ep}{4})}\lesssim s^{1-\frac{\nu}{p}} \cdot ||u||_{W^{1,p}(M^{\epd})} .
    \end{equation}
    
          The statement of the next step requires to introduce the following open neighborhoods of $\pi^{-1}(a)\cap M^\eta$ in $U_0^\ep$, for  $a\in \{0_{\R^{n-k}}\}\times \bou(0_{\R^k},\frac{\ep}{4})$ and  $\eta\in (0,\ep)$: $$V^\eta_a:= \{ x\in U_0^\ep: 2|\pi(x)-a| < |x-\pi(x)| < \eta \}.$$

 \begin{step}\label{step_v}There are positive constants $C$, $p_0$, and $\kappa$ such that for all $p>p_0$, $u\in \cc^{0,1}(\adh{U_0^{\ep}})$, and all $a\in \{0_{\R^{n-k}}\}\times \bou(0_{\R^k},\frac{\ep}{4})$, we have \begin{equation}\label{eq_step_cone}                                                                                                                                                                                  
                   ||u||_{\cc^{0,\kappa}(V_a^\frac{\ep}{16})} \le C ||u||_{W^{1,p}(U_0^\ep)}.
                   \end{equation}\end{step}

   \begin{proof}[proof of step \ref{step_v}.] We first focus on the case $a=0_{\R^n}$.
    For $\eta\in (0,\ep)$, set $Z^\eta:=V^\eta _0\setminus\adh{ V_0^{\eta/2}}$ and   apply $(\mathbf{A}_{k+1})$ to $Z^{\frac{\ep}{8}}$. This provides positive constants $C$, $p_0$, and $\kappa$ such that for all $p>p_0$, we have for all $v\in \cc^{0,1}(\adh{U^{\ep}_0})$ (see (\ref{eq_obs})):
     \begin{equation}\label{eq_hrk}
      ||v||_{\cc^{0,\kappa}(Z^{\frac{\ep}{8}})} \le C ||v||_{W^{1,p}(U_0^\frac{\ep}{4})}.
     \end{equation}
     Note that we can change $\kappa$ for a smaller constant. In particular, we can assume that $\kappa<\frac{1}{2\gamma}$, where $\gamma$ is the exponent for which $\tir_s$ satisfies (\ref{eq_r_lip}). 

     Given any function $u\in \cc^{0,1}(\adh{U_0^\ep})$, by Theorem \ref{thm_trace}, we have
 $|u(0)|\lesssim ||u||_{W^{1,p}(U_0^\frac{\ep}{4})}.$
    Consequently, since it is possible to subtract $u(0)$ if necessary, we see that it suffices to establish the desired estimate for a function which  vanishes at the origin.

    Notice first that we have for  $p$ sufficiently large,  $s\in (0,1]$, and  $u\in \cc^{0,1}(\adh{U_0^\ep})$ vanishing at the origin:
    $$ ||u||_{L^\infty(Z^{\frac{s\ep}{8}}_0)}= ||u\circ \tir_s||_{L^\infty(Z^{\frac{\ep}{8}}_0)} \overset{(\ref{eq_hrk})}{\lesssim}  ||u\circ \tir_s||_{W^{1,p}(U_0^\frac{\ep}{4})}\overset{(\ref{eq_utir_wp})}{\lesssim} s^{1-\frac{\nu}{p}}\cdot||u||_{W^{1,p}(M^\epd)} .$$
    As the involved constants are independent of $s$ and since $M^\epd \subset U_0^\epd$,  this implies that for such $u$ and $s\in (0,1]$ we have for $p$ large enough:
     \begin{eqnarray}\label{eqn_linfty_u_Vs} ||u||_{L^\infty(V^{\frac{s\ep}{8}}_0)}\lesssim s^{\frac{1}{2}}||u||_{W^{1,p}(U_0^\epd)}   .\end{eqnarray}
 We are going to show an analogous estimate for $|u|_{\cc^{0,\alpha}(V^{\frac{\ep}{16}}_0)}$ (see (\ref{eq_similar_for_vo}) below).
 Note first that (\ref{eq_hrk}) also implies that for  $u\in \cc^{0,1}(\adh{U_0^\ep})$ we have for all $x$ and $y$ in $Z_0^\frac{\ep}{8}$ and all $s\in (0,1]$:
\begin{equation*}
 |u(\tir_s(x))-u(\tir_s(y))|\lesssim  ||u\circ \tir_s||_{W^{1,p}(U_0^\frac{\ep}{4})}\cdot |x-y|^\kappa,
\end{equation*}
which, via (\ref{eq_utir_wp}) and (\ref{eq_r_lip}) (for $\tir_s$), entails  in the case where $u$ vanishes at the origin
\begin{equation}
 |u(\tir_s(x))-u(\tir_s(y))|\lesssim s^{1-\frac{\nu}{p}} ||u||_{W^{1,p}(U_0^\epd)}\cdot |\tir_s(x)-\tir_s(y)|^\kappa\cdot s^{-\kappa\gamma}.
\end{equation}
As $\kappa\gamma<\frac{1}{2}$ and since  $\tir_s(V_0^\frac{\ep}{8})=V_0^\frac{s\ep}{8}$, we have shown that there are constants $p_0$ and $C$ such that if $p>p_0$  then we have for all $u\in \cc^{0,1}(\adh{U_0^\ep})$ vanishing at the origin, all $s$, and all $x$ and $y$ in $Z_0^\frac{s\ep}{8}$:
 \begin{equation}\label{eq_similar_for_zo}
   |u(x)-u(y)|\le C ||u||_{W^{1,p}(U_0^\epd)} |x-y|^\kappa.
 \end{equation}
  We claim that this estimate continues to hold if we replace $Z_0^\frac{s\ep}{8}$ with $V_0^\frac{\ep}{16}$, i.e.,  that for $u\in \cc^{0,1}(\adh{U_0^\ep})$ vanishing at the origin:
     \begin{equation}\label{eq_similar_for_vo}
   |u|_{\cc^{0,\kappa}(V_0^\frac{\ep}{16})}\le C ||u||_{W^{1,p}(U_0^\epd)},
 \end{equation}
 for some possibly bigger constant $C$.
 
 To show this take any  $x$ and $y$ in $V_0^\frac{\ep}{16}$, with $|y-\pi(y)|\le |x-\pi(x)|$, and take any $s\in (\frac{8|x-\pi(x)|}{\ep},\frac{10|x-\pi(x)|}{\ep})$ (so that we have $x\in Z_0^{s\frac{\ep}{8}}$).  If $y$ also belongs to $Z_0^{s\frac{\ep}{8}}$ then the claimed inequality follows from (\ref{eq_similar_for_zo}). Otherwise, we must have $|y-\pi(y)|<\frac{s\ep}{16}$,  which  implies that ($z\mapsto (z-\pi(z))$ being an orthogonal projection, it is $1$-Lipschitz) $$|x-y|\ge |x-\pi(x)|-|y-\pi(y)| \ge \frac{s\ep}{10}-\frac{s\ep}{16}\ge \frac{s\ep}{32},$$
from which we can conclude that (for $p$ large and $u$ as above)
 \begin{equation*}
|u(x)-u(y)|\le |u(x)|+|u(y)|\overset{(\ref{eqn_linfty_u_Vs})}{\lesssim}  s^{\frac{1}{2}}\cdot ||u||_{W^{1,p}(U_0^\epd)}  \lesssim |x-y|^{\frac{1}{2}}\cdot ||u||_{W^{1,p}(U_0^\epd)},
\end{equation*}
    which yields (\ref{eq_similar_for_vo})  (here, we have applied (\ref{eqn_linfty_u_Vs}) twice to get a bound for both $|u(x)|\le ||u||_{L^\infty(V^{\frac{s\ep}{8}}_0)}$ and $|u(y)|\le ||u||_{L^\infty(V^{\frac{s\ep}{8}}_0)}$).
    
 To prove (\ref{eq_step_cone}), given $u\in W^{1,p}(U_0^\ep)$, $p>p_0$, and any $a\in \{0_{\R^{n-k}}\}\times \bou(0_{\R^k},\frac{\ep}{4})$, it now suffices to apply (\ref{eq_similar_for_vo}) to $\tilde{u}_a(x):=u(x+a)$. As $$||\tilde{u}_a||_{W^{1,p}(U_0^\epd)}\le ||u||_{W^{1,p}(U_0^\ep)},$$  and since $V^{\frac{\ep}{16}}_a$ coincides with $V^{\frac{\ep}{16}}_0$ up to a translation, we get the desired result.
\end{proof}


We are now ready to carry out the induction step in the case $k>0$, which will complete the proof of the theorem:
\begin{step}\label{step_ko}
 There are positive constants $p_0$, $C$, and $\alpha$ such that for all $p>p_0$ and all $u\in \cc^{0,1}(\adh{U_0^\ep})$ we have 
\begin{equation}\label{eq_induc_step_kpo}
 ||u||_{\cc^{0,\alpha}(U_0^\frac{\ep}{16})}\le C||u||_{W^{1,p}(U_0^\ep)},
\end{equation}with $\alpha=\min (\kappa, 1-\frac{\mu}{p})$, where $\mu$ is provided by step \ref{step_stra} and $\kappa$ by step \ref{step_v}.\end{step}
\begin{proof}[proof of step \ref{step_ko}.]
  As the mapping $u\mapsto u(0)$ is Lipschitz on  $W^{1,p}(U_0^\ep)$ (for $p$ large, by Theorem \ref{thm_trace}), it suffices to prove this estimate for the semi-norm $ |u|_{\cc^{0,\alpha}(U_0^\frac{\ep}{16})}$.

 Take $x$ and $y$ in $U_0^\frac{\ep}{16}$. If $|x-y|<\frac{|x-\pi(x)|}{2}$ then $x\in V_a ^\frac{\ep}{16}$, with $a:=\pi(y)$, and the needed inequality then directly follows from (\ref{eq_step_cone}). Otherwise, we have  $|x-y|\ge \frac{|x-\pi(x)|}{2}$, and consequently $|y-\pi(y)|\le 4|x-y| $, so that writing
$$|u(x)-u(y)|\le |u(x)-u(\pi(x))|+  |u(\pi(x))-u(\pi(y))|+ |u(y)-u(\pi(y))|,$$
we see that the desired inequality comes down from (\ref{eq_step_cone}) (applied to both couples $(x,\pi(x))$ and  $(y,\pi(y))$) and (\ref{eq_step_S}) (applied to the couple $(\pi(x),\pi(y))$).
\end{proof}

In the case where $M$ is not assumed to be normal, we have:
\begin{cor}\label{cor_morrey}
There  is a positive real number $\alpha$ such that for all $p$  sufficiently large the embedding $$(W^{1,\infty}(M),||\cdot||_{W^{1,p}(M)} )\hookrightarrow (\check{\cc}^{0,\alpha}(M),||\cdot||_{\check{\cc}^{0,\alpha}(M)})$$  is continuous and therefore extends to a compact embedding $W^{1,p}(M)\hookrightarrow \check{\cc}^{0,\alpha}(M)$.
\end{cor}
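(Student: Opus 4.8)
The plan is to reduce the statement to the normal case by means of a $\cc^\infty$ normalization. By Proposition \ref{pro_normal_existence}, $M$ admits a $\cc^\infty$ normalization $h:\mc\to M$, i.e., a definable $\cc^\infty$ diffeomorphism with $\sup_{\mc}|Dh|<\infty$ and $\sup_M|Dh^{-1}|<\infty$, where $\mc$ is a normal $\cc^\infty$ submanifold of some $\R^k$. First I would record that $\mc$ is bounded: since $h^{-1}$ has bounded derivative it is Lipschitz with respect to the inner metric of $M$, hence it sends each connected component of $M$ (of which there are finitely many, $M$ being definable) onto a bounded set. As $\mc$ is also globally subanalytic, Theorem \ref{thm_holder} applies to it and yields a positive real $\alpha$ such that, for all $p$ large enough, the embedding $W^{1,p}(\mc)\hookrightarrow\cc^{0,\alpha}(\mc)$ is continuous and compact. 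I would also note that $u\mapsto u\circ h$ is an isomorphism $W^{1,p}(M)\to W^{1,p}(\mc)$ with bounded inverse: this follows from the change of variables formula together with the bounds on $\jac h$ and $\jac h^{-1}$ (controlled by those on $|Dh|$ and $|Dh^{-1}|$) and the pointwise estimate $|\pa(u\circ h)|\le\sup_\mc|Dh|\cdot(|\pa u|\circ h)$.

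The \emph{key point} is that composing with $h^{-1}$ turns H\"older continuity with respect to the outer metric on $\mc$ into H\"older continuity with respect to the inner metric on $M$, with the same exponent. Indeed, if $\gamma$ is a $\cc^0$ definable arc in $M$ joining $h(x)$ and $h(y)$, then $h^{-1}\circ\gamma$ is a $\cc^0$ definable arc in $\mc$ joining $x$ and $y$ whose length is at most $\sup_M|Dh^{-1}|\cdot lg(\gamma)$; taking the infimum over $\gamma$ (the case of points in distinct components being settled by the convention on $d_M$ together with the fact that $h$ matches the components of $\mc$ with those of $M$) gives $d_\mc(x,y)\lesssim d_M(h(x),h(y))$. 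Combining this with $|x-y|\le d_\mc(x,y)$, I would deduce that for every $v\in\cc^{0,\alpha}(\mc)$ and all $z,w\in M$,
\[
|v(h^{-1}(z))-v(h^{-1}(w))|\le|v|_{\cc^{0,\alpha}(\mc)}\,|h^{-1}(z)-h^{-1}(w)|^\alpha\lesssim|v|_{\cc^{0,\alpha}(\mc)}\,d_M(z,w)^\alpha,
\]
which, together with $\|v\circ h^{-1}\|_{L^\infty(M)}=\|v\|_{L^\infty(\mc)}$, shows that $v\mapsto v\circ h^{-1}$ is a bounded linear map $\cc^{0,\alpha}(\mc)\to\check{\cc}^{0,\alpha}(M)$.

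Finally I would assemble these observations. For $p$ large, the composite
\[
W^{1,p}(M)\xrightarrow{\;u\mapsto u\circ h\;}W^{1,p}(\mc)\hookrightarrow\cc^{0,\alpha}(\mc)\xrightarrow{\;v\mapsto v\circ h^{-1}\;}\check{\cc}^{0,\alpha}(M)
\]
is a bounded linear map; it is compact because the middle arrow is (Theorem \ref{thm_holder}) and the other two are bounded. On $\cc^\infty(\mba)$ it coincides with the restriction $\varphi\mapsto\varphi_{|M}$ (since $\varphi\circ h$ already lies in $W^{1,p}(\mc)$ and is continuous, the middle arrow fixes it, and composing with $h^{-1}$ returns $\varphi$), and the same chain of estimates bounds $\|\varphi\|_{\check{\cc}^{0,\alpha}(M)}$ by a constant multiple of $\|\varphi\|_{W^{1,p}(M)}$, so the claimed embedding is continuous; being dense in $W^{1,p}(M)$, the space $\cc^\infty(M)\cap W^{1,p}(M)$ (which contains $\cc^\infty(\mba)$) makes the displayed composite the required continuous extension. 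Alternatively, once continuity on $\cc^\infty(\mba)$ is established, compactness of the extension follows directly from Ascoli-Arzela's theorem. I do not expect a genuine obstacle here; the only steps needing care are the change-of-variables norm comparisons under $h$ and the bookkeeping with connected components in the inner metric, and both are routine given the basic properties of $\cc^\infty$ normalizations recalled in Section \ref{sect_normal}.
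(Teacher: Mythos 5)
Your proof is correct and takes essentially the same approach as the paper: apply Theorem \ref{thm_holder} to a $\cc^\infty$ normalization $\mc$ of $M$, and use that the normalization and its inverse have bounded first derivatives (hence are bi-Lipschitz with respect to the inner metric) to transfer the embedding back to $M$. The paper's own proof is a two-sentence version of exactly this argument; you have simply filled in the routine details (change of variables for $W^{1,p}$, the comparison $d_\mc(x,y)\lesssim d_M(h(x),h(y))$, and the factorization of the embedding through $\mc$).
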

\begin{proof}
  It suffices to apply Theorem \ref{thm_holder} to a normalization of $M$.  As $\cc^\infty$ normalizations, as well as their inverse mappings, have bounded first derivative, they are bi-Lipschitz with respect to the inner metric.
\end{proof}

For $u\in W^{1,p}(M)$, we can regard the components of $\pa u$ in the canonical basis of $\R^n$ as the partial derivatives of $u$ (although $M$ may have positive codimension in $\R^n$). We then define the second order partial derivatives as the partial derivatives of these first order partial derivatives, and so on.  We have:

 \begin{cor}\label{cor_infty}
 Let $u\in L^1(M)$. If all the successive partial derivatives of $u$ are $L^1$  then $u$ is Lipschitz with respect to the inner metric. Moreover, in the case where $M$ is normal, such a function $u$ is H\"older continuous.
 \end{cor}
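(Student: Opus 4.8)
The plan is to bootstrap from the Sobolev embedding $W^{1,p}(M)\hookrightarrow L^{p+\mu}(M)$ of Theorem \ref{thm_embedding} until we land in a space $W^{1,p}(M)$ with $p$ large enough to apply Morrey's embedding (Corollary \ref{cor_morrey} in the non-normal case, Theorem \ref{thm_holder} in the normal case). The key observation is that the real number $\mu$ produced by Theorem \ref{thm_embedding} does \emph{not} depend on $p$; this is exactly the feature singled out in the introduction. So first I would fix $\mu>0$ as in Theorem \ref{thm_embedding} and a threshold $p_0$ beyond which Morrey's embedding holds (by Corollary \ref{cor_morrey}, with associated H\"older exponent $\alpha>0$). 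Since by hypothesis $u\in L^1(M)$ and all first-order partial derivatives of $u$ are $L^1$, we have $u\in W^{1,1}(M)$.

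The induction step: if $u\in W^{1,p}(M)$ with $p<p_0$, I want to conclude $u\in W^{1,q}(M)$ for some strictly larger $q$, namely $q=p+\mu$ (or $q=p_0$, whichever is smaller, to avoid overshooting). From Theorem \ref{thm_embedding} applied with exponent $q=p+\mu$, we get $u\in L^{p+\mu}(M)$. For the derivatives: by hypothesis, every $\pa_i u$ again has all its successive partial derivatives in $L^1(M)$, so by the same reasoning $\pa_i u\in W^{1,1}(M)$, and inductively $\pa_i u\in W^{1,p}(M)$ whenever $u\in W^{1,p}(M)$ — this is the point where the hypothesis ``all the successive partial derivatives'' (not merely second order) is used, it makes the bootstrap self-propagating. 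Hence $\pa_i u\in L^{p+\mu}(M)$ by Theorem \ref{thm_embedding} as well, so $|\pa u|\in L^{p+\mu}(M)$ and thus $u\in W^{1,p+\mu}(M)$. Iterating finitely many times (the exponent increases by the fixed amount $\mu$ each time, and $1+\mu,1+2\mu,\dots$ eventually exceeds $p_0$) gives $u\in W^{1,p}(M)$ for some $p>p_0$.

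Once $u\in W^{1,p}(M)$ with $p>p_0$, Corollary \ref{cor_morrey} yields that $u$ coincides almost everywhere with an element of $\check{\cc}^{0,\alpha}(M)$; if $M$ is normal, Theorem \ref{thm_holder} gives $u\in\cc^{0,\alpha}(M)$ (H\"older for the outer metric). To upgrade the H\"older exponent $\alpha$ to a Lipschitz exponent, I would use that the exponent produced by Theorem \ref{thm_holder} (resp. Corollary \ref{cor_morrey}) tends to a limit as $p\to\infty$ — more simply, re-run Morrey's embedding: since $u\in W^{1,p}(M)$ for \emph{every} $p\in[1,\infty)$ by the bootstrap above (the argument works for all starting $p$, so in fact $u\in\bigcap_{p<\infty}W^{1,p}(M)$), and one shows $u\in W^{1,\infty}(M)$; indeed Morrey's embedding with H\"older exponent $\alpha_p\to 1$ forces the limiting function to be Lipschitz with respect to the inner metric (using the remark recorded just before Remark \ref{rem_inout} that every element of $W^{1,\infty}(M)$ agrees a.e. with an element of $\check{\cc}^{0,1}(M)$). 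The boundedness of $|\pa u|$ follows because $\||\pa u|\|_{L^p(M)}$ stays bounded as $p\to\infty$: this is where one needs that the constants in Theorem \ref{thm_embedding} behave well (or simply that $\hn^m(M)<\infty$, so $L^p$ norms of a function that is $L^q$ for all $q$ are controlled), giving $\pa u\in L^\infty(M)$ and hence $u\in W^{1,\infty}(M)$.

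The main obstacle I anticipate is the last paragraph — passing from ``H\"older with exponent $\alpha$ for each large $p$'' to ``Lipschitz''. The clean way is to show $|\pa u|\in L^\infty(M)$ directly: one has $u\in W^{1,p}(M)$ for all finite $p$, and I would argue that $\sup_{p}\||\pa u|\|_{L^p(M)}<\infty$. This is not automatic from the bootstrap alone (the $L^{p+\mu}$ norm could grow with $p$), so one must invoke the remark following Theorem \ref{thm_embedding} that the constant $C$ in \eqref{ineq} can be taken independent of $p$ and $q$, together with $\alpha_{p,q}\le c|p-q|$ from \eqref{eq_alpha_lip}: choosing $q=p+\mu$ gives $\alpha_{p,p+\mu}\le c\mu$ bounded away from $1$, so iterating \eqref{ineq} keeps the $L^{p}$ norms of $u$ and $\pa u$ under control as $p\to\infty$. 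Then $u,|\pa u|\in L^\infty(M)$, i.e. $u\in W^{1,\infty}(M)$, and the cited fact about $W^{1,\infty}(M)\subset\check{\cc}^{0,1}(M)$ (a.e.) finishes the inner-metric statement; for normal $M$ one additionally invokes $\check{\cc}^{0,1}(M)\subset\cc^{0,\theta}(M)$ from Remark \ref{rem_inout}, but since we only claim H\"older continuity (not Lipschitz) in the normal case, even $u\in\cc^{0,\alpha}(M)$ for the $\alpha$ of Theorem \ref{thm_holder} already suffices.
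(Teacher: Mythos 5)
Your bootstrap idea is the right starting point, and you correctly identify that the hypothesis ``all successive partial derivatives are $L^1$'' makes the bootstrap self-propagating, so that $u$ and all its derivatives land in $L^{1+k\mu}(M)$ for every $k$ and hence in $W^{1,p}(M)$ for all finite $p$. But your final paragraph, which is supposed to pass from ``$u\in W^{1,p}(M)$ for all $p$'' to ``$u\in W^{1,\infty}(M)$,'' does not work as written, and this is exactly where the paper's argument has a simpler twist that you missed.

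First, the claim that the Morrey exponent $\alpha_p\to 1$ as $p\to\infty$ is false: Theorem \ref{thm_holder} and Corollary \ref{cor_morrey} produce a \emph{single} $\alpha>0$, independent of $p$, valid for all $p$ sufficiently large (indeed in the proof the exponent is capped at a fixed $\kappa$). So there is no limiting Lipschitz exponent to extract that way. Second, your fallback --- showing $\sup_p\||\pa u|\|_{L^p(M)}<\infty$ by iterating \eqref{ineq} with $p$-independent constants and using \eqref{eq_alpha_lip} --- leans on the remark following Theorem \ref{thm_embedding}, which only says it ``seems'' the constant could be made independent of $p$ and is not actually established in the paper; and even granting that, a uniform bound on $\alpha_{p,p+\mu}$ does not obviously prevent the iterated product of constants from blowing up.

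The paper's route avoids both issues: since each $\pa_i u$ also has all its successive partial derivatives in $L^1(M)$, the same bootstrap puts $\pa_i u\in W^{1,p}(M)$ for $p$ large, and one then applies Corollary \ref{cor_morrey} \emph{directly to each $\pa_i u$} (not just to $u$). This gives $u,\pa_1 u,\dots,\pa_n u\in\check{\cc}^{0,\alpha}(M)$ for some fixed $\alpha>0$, hence all are bounded because $M$ is bounded, hence $u\in W^{1,\infty}(M)$, and the conclusion follows from the observation (recorded before Remark \ref{rem_inout}) that $W^{1,\infty}(M)\subset\check{\cc}^{0,1}(M)$ up to modification on a null set. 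For the normal case, Theorem \ref{thm_holder} (or Remark \ref{rem_inout}) gives the outer-metric H\"older continuity. So the missing idea in your proof is precisely: apply Morrey to the derivatives, not to $u$ alone.
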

\begin{proof}
Observe first that by Theorem \ref{thm_embedding} (applied with $p=1$), such a function $u$, as well as all its subsequent partial derivatives, belong to $ L^{1+\mu}(M)$. Iterating $k$ times the argument, we thus can show that these functions actually belong to $L^{1+k\mu}(M)$ for all $k\in \N$. By Corollary \ref{cor_morrey}, we deduce that $u$ and $\pa_1 u,\dots, \pa_n u$ all belong to $\check{\cc}^{0,\alpha}(M)$ for some $\alpha>0$, which implies that these are bounded functions. Hence, $u\in W^{1,\infty}(M)$, which entails that it coincides almost everywhere with a function that is Lipschitz with respect to the inner metric.  The last sentence is a consequence of Theorem \ref{thm_holder} (see also Remark \ref{rem_inout}).
\end{proof}
If we define $W^{\infty,1}(M)$ as the space of $L^1$ functions that have all their subsequent partial derivatives $L^1$, the just above corollary exactly shows that  $$W^{\infty,1}(M)\subset W^{1,\infty}(M).$$   
We have actually shown that there is an integer $k$, depending on the Lipschitz geometry of $M$, such that $W^{k,p}(M)\subset W^{1,\infty}(M)$.
\begin{exa}\label{exa_cor_infty}This corollary is no longer true if the underlying manifold is simply required to be definable on a non necessarily polynomially bounded o-minimal structure: if $M$ is the manifold constituted by the interior of the set $Y$ introduced in Example \ref{exa_lcs} and $u(x,y)=\sqrt{x}$ then $u$ as well as all its subsequent partial derivatives belong to $L^1(M)$ but $u$ is not Lipschitz with respect to the inner metric.   \end{exa}

On behalf of all authors, the corresponding author states that there is no conflict of interest.
\end{section}

\end{document}